\numberwithin{equation}{section}
\theoremstyle{definition}
\newtheorem{define}{Definition}[section]
\newtheorem{example}[define]{Example}
\newtheorem{construction}{Construction}[section]
\theoremstyle{remark}
\newtheorem{remark}[define]{Remark}
\theoremstyle{plain}
\newtheorem{theo}[define]{Theorem}
\newtheorem{lemma}[define]{Lemma}
\newtheorem{prop}[define]{Proposition}
\newtheorem{cor}[define]{Corollary}
\newcommand{\E}{\mathscr E}
\newcommand{\upset}{\uparrow \!\!}
\newcommand{\downset}{\downarrow\!\!}
\newcommand{\slat}{\mathbf{sLat}}
\newcommand{\interior}{\mathsf{int}}
\newcommand{\closure}{\mathsf{cl}}
\newcommand{\mspec}{\mathbf{mSpec}}
\newcommand{\mscr}{\mathscr}
\newcommand{\set}{\mathbf{Set}}
\def\slashedarrowfill@#1#2#3#4#5{%
  $\m@th\thickmuskip0mu\medmuskip\thickmuskip\thinmuskip\thickmuskip
  \relax#5#1\mkern-7mu%
  \cleaders\hbox{$#5\mkern-2mu#2\mkern-2mu$}\hfill
  \mathclap{#3}\mathclap{#2}%
  \cleaders\hbox{$#5\mkern-2mu#2\mkern-2mu$}\hfill
  \mkern-7mu#4$%
}
\def\rightslashedarrowfill@{%
  \slashedarrowfill@\relbar\relbar\mapstochar\rightarrow}
\newcommand\xslashedrightarrow[2][]{%
  \ext@arrow 0055{\rightslashedarrowfill@}{#1}{#2}}
\def\hto{\xslashedrightarrow{}}
\title{A Topos-Theoretic Semantics of Intuitionistic Modal Logic with an Application to the Logic of Branching Spacetime}
\author{Michael J. Lambert}
\date{October 2024}
\begin{document}

\maketitle

\begin{abstract}
    The Alexandrov topology affords a well-known semantics of modal necessity 
    and possibility. This paper develops an Alexandrov topological semantics of 
    intuitionistic propositional modal logic internally in any elementary 
    topos. This is done by constructing \emph{interior} and \emph{closure} 
    operators on the power-object associated to a given relation in the ambient 
    topos. When the relation is an order, these operators model intuitionistic 
    S4; when the relation is an equivalence relation, they also model the 
    characteristic (B) axiom of classical S5. The running example of interest 
    arises from the \emph{Branching space-time} of Nuel Belnap, which is shown 
    to induce a \emph{histories presheaf} upon which can be defined an 
    equivalence relation of being \emph{obviously undivided} at a given point 
    event. These results have some philosophical implications. For example, we 
    study the branching space-time example in light of the 
    \emph{indistinguishability interpretation} of epistemic modal logic. We 
    will also study several famous first-order formulas in presheaf topos 
    semantics such as the so-called \emph{Barcan formula}. We shall see, 
    however, that one of the Barcan converses is invalidated by a simple 
    example of non-trivial space-time branching. This invalidates a thesis of 
    \textit{metaphysical actualism}, namely, that there are no possibly 
    existing but non-actual entities.
\end{abstract}

\tableofcontents

\section{Introduction}

Topos-theoretic developments of \emph{modality} come in two flavors. 
On the one hand, there are the \emph{geometric modalities} manifesting as local 
operators and leading to the theory of sites and sheaves. On the other hand, 
there are various internalized incarnations of the usual modal operators 
for necessity and possibility. These two have in common the goal of defining or 
constructing certain morphisms in the topos structure that model the behavior 
of the modality in question. Although local operators are well-studied, the 
topos-theoretic accounts of necessity and possibility are fewer and less 
standardized in their approaches. Perhaps this partly is owing to the nature of 
modal logic itself. That is, the modal logic of possibility and necessity is 
usually a classical, propositional logic, presented in a Hilbert-style 
formalism, while the internal logic of toposes is in general intuitionistic, 
higher-order, and presented in a sequent-calculus or type-theoretic formalism. 
Consequently, there is not a great deal of consensus about what exactly 
intuitionistic modal logic should be, much less a great deal of systematization 
of, or even interest in, its categorical semantics.

In the interest of supplying this lack, this paper is a contribution to the 
subject of the categorical semantics of intuitionistic modal logic. The goal is 
to show how to internalize the ordinary semantics of the Alexandrov topology in 
an arbitrary topos. This is done by starting with an internal relation and then 
developing internalized interior and closure operators on the power-object 
associated to the underlying object of the relation. When the relation is an 
order, the power-object models intuitionistic S4; when the relation is also 
symmetric, it models the (B) axiom which would be characteristic of 
intuitionistic S5. These results are purely topos-theoretic analogues of the 
standard Kripke semantics of modal logic \cite{kripke1963} phrased in terms of 
the operators of topological semantics \cite{mckinseytarski1944} associated to 
orders. Although this is a sort of \emph{back to basics} approach that may 
appear outwardly naive or too simplistic, we believe that there are a variety 
of useful constructions that come up along the way and that the overall 
development illustrates well the use of Kripke-Joyal forcing semantics. There 
are also applications and philosophical implications previewed below.

Investigation of the topos forcing semantics of modal operators is not exactly 
new, but to our knowledge only three studies, namely, 
\cite{awodeykishidakotzsch2014}, \cite{reyeszolfaghari1991} and 
\cite{reyes1991} have done so in any depth. These have in common the 
construction of certain \emph{internal adjunctions} used to interpret the 
necessity and possibility operators. In particular, all three studies interpret 
necessity by a comonad associated to an internal frame homomorphism arising 
from a given external geometric morphism either to or from the ambient topos in 
question. Although such subsidiary geometric morphisms usually exist, or at 
least can be cooked up, it seems worthwhile to develop a forcing semantics 
without the assumption of any such geometric morphism. Such an account is 
completely internal using only the given structure of the single fixed topos 
viewed as a self-contained \emph{universe of discourse}. The present attempt to 
do so derives the required internal adjunctions from the topos structure 
directly using only the assumption of the existence of an internal relation. 
Such relations abound; for example, we may take the diagonal on a given object. 
Our internalization of these topological semantics in the form of the 
Alexandrov topology also has the satisfying outcome that it is intuitive and 
formally very close to the ordinary set-theoretic version.

This paper is not one on the syntax or proof theory of modal logic. We do not 
attempt to provide novel or state-of-the-art modal systems. For intuitionistic S4 we are happy simply to adopt the natural deduction system (IS4) of 
\cite{biermandepaiva2000} which has the satisfying feature of 
\emph{substitutivity of proofs}. This system has a ready-made, sound categorical semantics, and our models will be shown to be of this type. When it comes to intuitionistic S5, there appears to be no corresponding system of natural deduction satisfying substitutivity of proofs. However, the distinguishing axiom of classical S5 is the (B) axiom which is modelled categorically by the MAO-couples of \cite{reyes1991} and \cite{reyeszolfaghari1991}. For this reason we skirt the question of giving a system (IS5) but think of MAO-couples as potential models of such a system, were it to be developed in subsequent future work. One main result is that any internal equivalence relation affords canonically an MAO-couple, hence a model of some hypothetical system (IS5).

Our application is to the logic of the so-called \emph{branching space-time} \cite{belnap1992}. This turns out to induce canonically an MAO-couple in a certain presheaf topos. This model stems from what we call the \emph{histories presheaf}, defined on the underlying poset of branching space-time. The histories presheaf sends each point event of branching space-time to the set of all histories of that point event. The relation \emph{obviously undivided} at a point event induces an equivalence relation on the histories presheaf, hence an MAO-couple. This model gives an especially satisfying gloss on modal statements with a distinctly epistemic flavor, which we envision as being relevant to formulating categorical epistemic modal logic generally. The way that this is achieved is by looking at the Kripke-Joyal forcing semantics of modal statements arising from the internal interior and closure operators associated to our Alexandrov topology constructions.

A simple example of non-trivial branching figures prominently in a concluding application to the analysis of several first-order formulas. In particular, we consider the so-called \textit{Barcan formulas} and their converses \cite{barcan_1947} which seem to be a perennial source of discussion in formal metaphysics and epistemology. Our analysis of these formulas is applied to existence predicates \cite{scott1979} and briefly to the issue of \textit{actualism} versus \textit{possibilism} in the metaphysics of modality (stemming from the the introduction of possible world semantics to explain modality \cite{carnap1947}, \cite{hintikka1962}, \cite{kripke1963}, \cite{lewis1986}). Against the possibilist position that there exist non-actual entities, actualism asserts that there are no possibly existing but non-actual entities \cite{adams1974}. We take the \emph{actualist thesis} in a positive formulation to be 
    \begin{equation*} \label{equation:actualist-thesis}
        \text{what possibly exists is actual} \tag{A}
    \end{equation*}
i.e. there are no merely possible but non-actual entities. Perhaps 
unsurprisingly, our result is that non-trivial branching invalidates this 
thesis. The reason is that non-trivial branching means that alternate histories 
of a point event exist and are possible but not necessarily actual. The point 
of our development is that the invalidation of the actualist thesis is 
deducible from the introduced formalism of topos-theoretic modality and 
non-trivial branching. The question as to whether this in any way settles the 
debate turns upon whether branching space-time is a sound model and is not 
taken up in this paper.

In more detail, some of our principal results and contributions are the following.

\begin{enumerate}
    \item Interior and closure operators associated to an internal relation 
    appear in \cref{definition:internalinterioroperator} and 
    \cref{definition:internalclosureoperator} using certain naturally induced 
    morphisms on power-objects in the ambient topos. The interior operator is a 
    composition of a downset operator followed by an inverse image. Likewise, 
    the closure operator is a direct image followed by a join on the associated 
    power-object.
    \item The forcing semantics of these operators are the central ingredients 
    of our interpretation of modal necessity and possibility. The resulting 
    semantics appear in \cref{proposition:interiorsemantics} and 
    \cref{corollary:closureforcingsemantics}. These forcing statements mimic 
    the ordinary set theoretic topological semantics of these operators in the 
    case of the Alexandrov topology but incorporate the characteristic forcing 
    element of \emph{being true at a stage}.
    \item Along the way, in \cref{lemma:forcingsemanticsofdirectimage}, we 
    clarify the forcing semantics of direct images in toposes. We also give a 
    forcing semantics of the join operator on a given internal power-object in 
    \cref{proposition:forcingsemanticsofinternaljoinoperator}. Together these 
    give our semantics of the Alexandrov closure, discussed immediately above.
    \item The main theoretical results of the paper are 
    \cref{theorem:ordermodels(IS4)} and 
    \cref{theorem:equivalencerelationmodels(IS5)} which show that under 
    suitable conditions on the underlying relation, the associated power-object 
    with its interior and closure operators are categorical models of necessity 
    and possibility.
    \item The example of branching space-time \cite{belnap1992}, a synthetic 
    fusion of relativity and algebraic semantics of branching temporal logic, 
    is shown to induce canonically an equivalence relation in a certain 
    presheaf 
    topos, hence an MAO-couple, in \cref{theorem:historiespresheafmodelsIS5}.
    \item There appears to be some general philosophical import of our 
    constructions. This has its origins in unpacking the forcing semantics of 
    the modal operators in the context of presheaf categories and particularly 
    the example of branching space-time. The purpose of 
    \cref{subsection:indistinguishability} is to make an analogy the 
    \emph{indistinguishability interpretation} of necessity statement in modal 
    epistemic logic.
    \item The concluding discussion \cref{subsection:firstorderconsiderations} 
    exhibits several first-order modal formulas for analysis. We shall see that 
    equivalence relations in toposes in particular always validate the 
    so-called \emph{Barcan formula}, but by example that the converse does not 
    hold. Likewise, we shall see that \emph{de re} existential statements 
    always imply \emph{de dicto} existential statements but again by example 
    that the converse is not true. In the context of existence predicates, this 
    result thus says that whatever exists necessarily exists, but not 
    conversely. For this reason, branching space-time is not a model of 
    \emph{metaphysical actualism}.
\end{enumerate}

The plan of the paper is as follows: \cref{section:intuitionisticmodallogic} 
gives an overview of our version of intuitionistic S4 and S5, recounts the 
usual set-theoretic topological semantics of their classical versions, provides 
an overview of basic topos theory and internal preorder theory in the form in 
which we need it; the culmination is the definition of the topos-theoretic 
algebraic models of our intuitionistic modal systems; 
\cref{section:internalizedalexandrovtopology} presents our internalization of 
the Alexandrov topology, including our construction of the interior and closure 
operators and prove that they provide the appropriate algebraic models of 
intuitionistic S4 and S5; finally \cref{section:branchingspacetime} shows that 
the example of branching space-time induces a canonical equivalence relation on 
the histories presheaf and thus as a result of the general theory affords a 
model of intuitionistic S5.

\paragraph{Acknowledgements} Questions posed by Kohei Kishida and Marcello 
Lafrancesco led to the undertaking of this project in earnest. Special thanks 
are due to Valeria de Paiva who upon reading a draft of the manuscript provided 
indispensible guidance on the categorical semantics of intuitionistic modal logic.

\section{Topos-Theoretic Algebraic Semantics}
\label{section:intuitionisticmodallogic}

This section is mostly expository in nature. It summarizes the adopted system 
(IS4) and after reviewing some topos theory gives a definition of the internalized 
models of (IS4) and the models of the hypothetical system (IS5) in the form of 
MAO-couples. We provide some subsidiary results and clarifications along the way 
that will help in subsequent developments.

\subsection{The System (IS4)}
\label{subsection:SetTheoreticSemanticsofModalLogicOverview}

Consideration of the connections between space-time and modal logic probably 
originate with \cite{goldblatt1980}. The modal system there is however classical and since we will be working in toposes ours ought to be \emph{intuitionistic}. However, in dealing with modal logic, our purpose is neither to give a comprehensive presentation of the subject nor to exhibit a state-of-the-art system. It is rather merely to present a reasonable, minimal system to open the door to topos-interpretations of possibility and necessity with a minimal assumption of available structure. In particular, we do not assume the existence of a base topos, or any extra external geometric morphisms as in \cite{reyes1991} and \cite{awodeykishidakotzsch2014}, although these are usually available.

The system of choice is \textbf{intuitionistic S4} (IS4) of 
\cite{biermandepaiva2000}. Specifically, we assume throughout an intuitionistic propositional logic $\bot$, $\top$, $\neg \phi$, $\phi\wedge \psi$, $\phi\vee \psi$, $\phi\supset \psi$, augmented by modal necessity $\Box \phi$ and possibility $\Diamond \phi$ operators subject to the axioms of the usual intuitionistic calculus (IPC) plus the further modal axioms, namely,
    \begin{enumerate}
        \item $\Box(\phi\supset \psi)\supset (\Box \phi\supset \Box \psi)$
        \item $\Box \phi\supset \phi$ and $\phi\supset\Diamond \phi$
        \item $\Box \phi\supset \Box\Box \phi$
        \item $\Box (\phi\supset \Diamond \psi)\supset (\Diamond \phi \supset \Diamond \psi)$.
    \end{enumerate}
The natural deduction calculus for (IS4) is detailed in 
\cite[\S 4]{biermandepaiva2000} where it is shown to satisfy 
\emph{substitutivity of proofs}, a feature missing from previous accounts of 
intuitionistic S4 (e.g. \cite[Chapter VI]{prawitz2006}).
Our choice of this system is due to the fact that \cite{biermandepaiva2000} 
provides a sound categorical semantics of (IS4) in the form of certain 
distinguished \emph{categorical models}. This is reviewed and internalized in 
\cref{def:categorical-model-IS4}. A main result of this paper 
(\cref{theorem:ordermodels(IS4)}) is that any order relation in a topos affords 
a categorical model of (IS4) in this sense.

Our examples prompt consideration of certain additional modal axioms. The 
classical system \textbf{Diodorean S4} of \cite{goldblatt1980} is also the 
modal system of knowledge and belief of \cite{stalnaker2006}. This is built on 
classical propositional logic and includes the comonad axioms for $\Box$ 
(the monad axioms for $\Diamond$ follow classically from inter-definability), 
along with the \emph{distributive law}, namely, $\Diamond\Box \phi\supset 
\Box\Diamond \phi$. This modal axiom is sometimes labeled (C) since the 
corresponding frame condition is a convergence law on the associated relation, 
saying that two elements with a common lower bound also have a common upper 
bound:
    \begin{equation*}
        x\leq y \text{ and } x\leq z \text{  implies  } y\leq w \text{ and } z\leq w \text{ for some } w.
    \end{equation*}
The modal system (IS4) augmented by the distributive law (C) will be called 
\textbf{distributive (IS4)}. The categorical semantics of (IS4) and 
distributive (IS4) is discussed in \cref{subsection:toposmodelsgenerally} and 
specifically \cref{def:categorical-model-IS4} where we adopt the categorical 
models of \cite{biermandepaiva2000}.

The example of \emph{branching space-time} \cite{belnap1992} models a more 
specific modal system that we are tempted to call \textbf{intuitionistic S5} 
(IS5). This would hypothetically be a variation on (IS4) above plus the 
further axiom scheme (B)
    \begin{equation} \label{equation:s5axiomB}
        \phi\supset\Box\Diamond \phi \text{  and  } \Diamond\Box \phi\supset \phi.
    \end{equation}
which is the characteristic scheme added to classical S4 to obtain classical S5.
However, although there are several proposal for a system (IS5) (e.g.\cite{prawitz2006}, \cite{simpson1994}, \cite{alechina2001}) no such system with a natural deduction calculus satisfying substitutivity of proofs appears to exist. For this reason we bracket the question of presenting a system (IS5) but think of some hypothetical system as minimally (IS4) plus (B). Our preference is to switch to an existing categorical semantics and take the MAO-couples (\cref{def:MAO-couple}) of \cite{reyes1991} and \cite{reyeszolfaghari1991} as likely candidates for a class of models of such a hypothetical system. This is because MAO-couples are preorder categorical models of (IS4) 
(\cref{lemma:MAO-are-models-IS4}) for which the relevant structure forms 
an internal adjoint modality modeling the distinguishing (B) axiom. One of our 
main results (\cref{theorem:equivalencerelationmodels(IS5)}) is that any 
equivalence relation in a topos $\E$ induces in a canonical way an MAO-couple, 
hence what we expect will be a categorical model of a system (IS5).

Our models are based on the traditional semantics of classical S4 originating with \cite{kripke1963}, \cite{kripke1965}. Classical S4 is interpreted via an \emph{accessibility relation} $R$ on a set $X$ that is at least reflexive and transitive. We shall refer to such a relation as an \emph{order}. Accessibility relations interpreting classical S5 are equivalence relations, that is, those relations that are reflexive, transitive and also symmetric. The related \emph{topological semantics} originate with \cite{mckinseytarski1944} and are summarized in \cite[\S 1.3]{awodeykishida2008}, \cite[\S 2.2]{kishida2011} and \cite[\S 3.2]{kishida2017} where it is shown in particular how topological semantics recover and generalize those of Kripke. In this set up, let $(X,\mscr O(X))$ denote a topological space with $\mscr O(X)$ the frame of open sets. Each proposition is interpreted as a subset $\llbracket \phi \rrbracket\subset X$ in such a way that
    \begin{enumerate}
        \item $\llbracket \neg \phi \rrbracket = X\setminus \llbracket \phi\rrbracket$
        \item $\llbracket \phi\wedge \psi \rrbracket = \llbracket \phi\rrbracket \cap \llbracket \psi\rrbracket$
        \item $\llbracket \phi\vee \psi\rrbracket = \llbracket \phi\rrbracket \cup \llbracket \psi\rrbracket$
        \item $\llbracket \phi\supset \psi\rrbracket = \llbracket \phi\rrbracket \multimap \llbracket \psi\rrbracket$
    \end{enumerate}
where `$\multimap$' here denotes the implication operator on the Boolean algebra powerset $PX$. The box operator $\Box \phi$ is interpreted using the associated interior operator
    \begin{equation*}
        \interior\colon PX\to\mscr O(X)\qquad\qquad S\mapsto \bigcup\lbrace U\in \mscr O(X)\mid U\subset S\rbrace
    \end{equation*}
taking a subset $S\subset X$ to the union of all open sets contained in $S$. 
That is, declare 
    \begin{equation*}
        \llbracket \Box \phi \rrbracket := \interior\llbracket \phi\rrbracket.
    \end{equation*}
Likewise, $\Diamond$ is interpreted using the closure operator $\closure\colon 
PX\to \mscr O(X)$, namely, the intersection of all closed sets containing a given set. The usual semantics is that the sequent $\phi\vdash\psi$ is valid if, and only if, $\llbracket \phi\rrbracket \subset \llbracket\psi\rrbracket$ holds in $PX$.

A particular example is the so-called \emph{Alexandrov topology} associated to 
any set with a relation \cite[\S II,1.8]{johnstone1982}. Let $(X,\leq)$ denote 
an order and declare the opens to be the upsets: those $U\subset X$ such that 
$x\in U$ and $x\leq y$ implies $y\in U$ too. Consequently, the closed sets are 
the \emph{downsets}. We then have the following element-wise descriptions of 
closure and interior:
    \begin{equation} \label{equation:interiorAlexandrovTopologyElementDescription}
        s\in \interior (S) \text{ if, and only if, } s\leq x \text{ implies } x\in S
    \end{equation}
and
    \begin{equation} \label{equation:closureAlexandrovTopologyElementDescription}
        s\in\closure (S) \text{ if, and only if, } s\leq x \text{ for some } x\in S. 
    \end{equation}
Thus, interpreting $\Box$ as $\interior$, we have the Alexandrov semantics of 
necessity, namely, 
    \begin{align*}
        s\in \llbracket \Box p \rrbracket & \text{ if, and only if, } s\in \interior \llbracket p\rrbracket \\
                                          & \text{ if, and only if, } x\in \llbracket p \rrbracket \text{ for all $x$ such that } s\leq x.  
    \end{align*}
Likewise, interpreting the possibility operator $\Diamond$ as Alexandrov closure, 
we have the existential semantics
    \begin{align*}
        s\in \llbracket\Diamond p \rrbracket & \text{ if, and only if, } s\in \closure \llbracket p\rrbracket \\
                                             & \text{ if, and only if, } x\in \llbracket p \rrbracket \text{ for some $x$ such that } s\leq x
    \end{align*}
Thinking of a proposition $S\subset X$ being true for an individual $x\in X$ if, and only if, $x\in S$, this reproduces the original semantics of $\Box$ given in \cite{kripke1963}. The goal of this paper is to construct an internal Alexandrov topology for any object with a relation in an arbitrary topos. To this end, we need to construct associated interior and closure operators and to better understand their forcing semantics. This will be done in 
\cref{subsection:interiorandclosureinternally} and \cref{subsection:operatorsemantics} 
below.

\subsection{Toposes and Their Internal Logic}
\label{subsection:toposesandinternallogic}

Since our goal is to internalize the Alexandrov topology, we must set out the 
main features of elementary toposes that will figure in our development. Our
reference is \cite{maclanemoerdijk1992}. Briefly, a topos is a finitely 
complete category $\mscr E$ with a subobject classifier $\Omega$ and 
power-objects.

\begin{define} \label{definition:power-object}
    The \textbf{power object} of an object $X\in\mscr E$ is an object $PX$ 
    together with a membership morphism $\in_X\colon X\times PX\to \Omega$ with 
    the property that for any $f\colon X\times Y\to \Omega$ there is a unique 
    morphism $\hat f$ making
        \[\begin{tikzcd}
            {X\times PX} & \Omega \\
            {X\times Y}
            \arrow["{\in_X}", from=1-1, to=1-2]
            \arrow["{1\times\hat f}", from=2-1, to=1-1]
            \arrow["f"', curve={height=6pt}, from=2-1, to=1-2]
        \end{tikzcd}\]
    commute. Call $\hat f$ the \textbf{transpose} of $f$. \qed
\end{define}

By uniqueness, for any such morphism $f$, we have $f = 1_X\in_X\hat f$. As a 
special case, we recover $P1\cong \Omega$. 

\begin{example}
    The categories $\set$, $\mathbf{FinSet}$, and $\set^{\to}$ are all 
    toposes.  \qed
\end{example}

\begin{example} \label{example:presheafandsheafcategoriesaretoposes}
    Recall that a presheaf on $\mscr C$ is a functor $\mscr C^{op}\to\set$. 
    Together with natural transformations as morphisms these comprise a topos 
    of presheaves $[\mscr C^{op},\set]$. Likewise, any category of sheaves 
    (presheaves with a unique amalgamation property) $\mathsf{Sh}(\mscr C,J)$ 
    on a site $(\mscr C,J)$ is a topos. Presheaf toposes are treated in detail 
    in \cref{subsection:presheafcategorycomputations}. \qed
\end{example}

For any topos $\E$, the subobject classifier $\Omega$ is an internal Heyting 
algebra. The binary meet operation $\wedge$ in particular is the classifying 
morphism of $\langle\top,\top\rangle\colon 1\to \Omega\times\Omega$ where 
$\top\colon 1\to \Omega$ is the top element. Write $\top_W\colon W\to \Omega$ 
for the arrow $\top\circ !\colon W\to 1\to \Omega$, thought of, as \emph{true 
relative to $W$}. The internal preorder on $\Omega$ is the equalizer of 
$\wedge$ and the first projection. The Heyting implication $\Rightarrow\colon 
\Omega\times\Omega\to\Omega$ is the classifying morphism of the order 
subobject. The internal equality on $\Omega$ is the classifying morphism of the 
diagonal $\Delta\colon \Omega \to\Omega\times \Omega$. 

By \cite[Theorem IV.8.1]{maclanemoerdijk1992}, each power object $PX$ is also 
an internal Heyting algebra and the assignment 
    \begin{equation*}
        g \colon W\to PX\qquad \mapsto \qquad \in_X(1\times g)
    \end{equation*}
is the unique one inducing a natural isomorphism of external Heyting algebras
    \begin{equation} \label{equation:isomorphismofHeytingalgebras}
        \mscr E(W,PX) \cong \mscr E(W\times X,\Omega)
    \end{equation}
whose reverse correspondence is transposition. In particular, each $PX$ has an 
internal preorder given again as the equalizer of the meet and first projection. 
More on internal preorders following \cref{definition:orders} below.

The power-object operation extends to a functor $P\colon \mscr E^{op}\to
\mscr E$ whose arrow assignment is given by sending a given arrow $f$ to the 
transpose $Pf$ appearing in the diagram
    \[\begin{tikzcd}
        Y && {X\times PX} & \Omega \\
        X && {X\times PY} & {Y\times PY}
        \arrow["{\in_X}", from=1-3, to=1-4]
        \arrow[""{name=0, anchor=center, inner sep=0}, "{1\times Pf}", from=2-3, to=1-3]
        \arrow[""{name=1, anchor=center, inner sep=0}, "f"', from=2-1, to=1-1]
        \arrow["{f\times 1}"', from=2-3, to=2-4]
        \arrow["{\in_Y}"', from=2-4, to=1-4]
        \arrow["\mapsto"{description, pos=0.4}, draw=none, from=1, to=0]
    \end{tikzcd}.\]
Uniqueness makes this a contravariant functor on $\mscr E$. Call $Pf$ the 
\textbf{inverse image} morphism associated to $f$. In ordinary set theory, this 
morphism has the action 
    \begin{equation} \label{equation:settheoreticinverseimagecalculation}
        Pf(T) = \lbrace a\in A\mid f(a)\in T\rbrace
    \end{equation}
which is the usual inverse image morphism.

Associated to each such arrow there is also a \textbf{direct image} 
$\exists_f\colon PX\to PY$. First let $m\colon U_X\to X\times PX$ denote the 
subobject classified by $\in_X$ and then $\exists_f$ is the transpose in the 
diagram on the right:
    \[\begin{tikzcd}
        {U_{X}} & I & 1 && {Y\times PY} & \Omega \\
        & {Y\times PX} & \Omega && {Y\times PX}
        \arrow[tail, from=1-2, to=2-2]
        \arrow["{!}", from=1-2, to=1-3]
        \arrow["\top", from=1-3, to=2-3]
        \arrow["\phi"', from=2-2, to=2-3]
        \arrow[two heads, from=1-1, to=1-2]
        \arrow["{(f\times 1)m}"'{pos=0.2}, curve={height=12pt}, from=1-1, to=2-2]
        \arrow["{\in_Y}", from=1-5, to=1-6]
        \arrow["{1\times\exists_f}", from=2-5, to=1-5]
        \arrow["\phi"', curve={height=12pt}, from=2-5, to=1-6]
    \end{tikzcd}.\]
That is, first classify the image of $(f\times 1)m$ via $\phi$ and then define 
$\exists_f$ to be the transpose of that classifying map. In ordinary set 
theory, this has the effect
    \begin{equation} \label{equation:settheoreticdirectimagecalculation}
        \exists_fS = \lbrace b\in B\mid b=f(a) \text{ for some } a\in A\rbrace.
    \end{equation}
Calculations will depend upon the following result, whose proof is 
straightforward by the constructions of the inverse and direct image morphisms 
above.

\begin{lemma} \label{lemma:calculateinverseanddirectimage}
    For any morphism $f\colon A\to B$,
        \begin{enumerate}
            \item $(a\in_A P(f)(s)) = (f(a)\in_B s)$
            \item $(a\in_A s) = (f(a)\in_{PX}\exists_f(s))$
        \end{enumerate}
    for any element $\langle a,s\rangle\colon W\to A\times PA$.\qed
\end{lemma}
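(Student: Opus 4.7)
The plan is to deduce both identities by post-composing the defining universal diagrams for $P(f)$ and $\exists_f$ with the generalized element $\langle a,s\rangle$. Each clause reduces to a direct diagram chase in $\E$, and the two arguments are dual: the first exploits the uniqueness of the transpose in \cref{definition:power-object}, while the second exploits the image factorization used to construct the direct image.

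For clause (1), the defining equation of the transpose reads $\in_A\circ(1_A\times P(f))=\in_B\circ(f\times 1_{PB})$ as morphisms $A\times PB\to\Omega$. Post-composing both sides with the pairing $\langle a,s\rangle\colon W\to A\times PB$ produces $a\in_A P(f)(s)=f(a)\in_B s$ as morphisms $W\to\Omega$. This clause is truly immediate from the universal property.

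For clause (2), I would unpack the definition of $\exists_f$ as the transpose of the classifying map $\phi\colon B\times PA\to\Omega$ of the image $I$ of $(f\times 1_{PA})\circ m$, where $m\colon U_A\hookrightarrow A\times PA$ is the subobject classified by $\in_A$. The defining relation $\in_B\circ(1_B\times\exists_f)=\phi$, post-composed with $\langle f(a),s\rangle=(f\times 1_{PA})\circ\langle a,s\rangle$, yields
\[
f(a)\in_B\exists_f(s)=\phi\circ(f\times 1_{PA})\circ\langle a,s\rangle.
\]
The key step is to identify this composite with $a\in_A s=\in_A\circ\langle a,s\rangle$; since $(f\times 1_{PA})\circ m$ factors through $I$ by construction, the subobject of $A\times PA$ classified by $\phi\circ(f\times 1_{PA})$ contains $U_A$, which supplies the implication $a\in_A s\Rightarrow f(a)\in_B\exists_f(s)$ directly.

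I expect the delicate point to be clause (2): strictly as morphisms into $\Omega$, the two sides need not agree unless $f$ is monic, because $f(a)\in_B\exists_f(s)$ only asserts existence of \emph{some} preimage of $f(a)$ lying in $s$. In the Kripke--Joyal reading that the paper adopts, this existential content is supplied by the epi-mono factorization $U_A\twoheadrightarrow I$, which provides a covering family over which a lift exists, and the identity is to be read in that forcing sense rather than as strict equality of characteristic maps.
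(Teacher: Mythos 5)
Your clause (1) is exactly the intended argument: the paper offers no written proof beyond declaring the lemma straightforward from the constructions, and post-composing the transpose identity $\in_A\circ(1\times P(f)) = \in_B\circ(f\times 1)$ from \cref{definition:power-object} with the element $\langle a,s\rangle$ is all there is to it (note the statement's typing is loose here: for this clause $s$ must be an element of $PB$, not $PA$). For clause (2), both your derivation and your caveat are on target. What your chase actually establishes is the single implication, i.e.\ the inequality in the internal order on $\Omega$, $(a\in_A s)\leq (f(a)\in_B\exists_f(s))$: since $(f\times 1)m$ factors through the image $I$, the subobject classified by $\phi\circ(f\times 1)$ contains the one classified by $\in_A$. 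The reverse direction is genuinely false as a strict equality of arrows $W\to\Omega$ when $f$ is not monic --- in $\set$ take $f\colon\{0,1\}\to 1$, $s=\{0\}$, $a=1$: then $f(a)\in_B\exists_f(s)$ holds but $a\in_A s$ fails --- so clause (2) read literally as an equality is overstated (there is also the typo $\in_{PX}$ for $\in_B$). The correct two-sided statement is precisely the existential forcing characterization the paper proves later in \cref{lemma:forcingsemanticsofdirectimage}, where the epi part of the image factorization supplies the cover over which some preimage of $f(a)$ lying in $s$ exists, exactly as you anticipate. So the defensible reading of clause (2) is the unit inequality $s\leq P(f)(\exists_f(s))$ of the adjunction $\exists_f\dashv P(f)$ (equivalently, via your clause (1), the direction you proved), or else the forcing statement; your proof of the true direction is correct and is essentially the argument the paper leaves implicit.
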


We will have need of some generalities on preordered objects in toposes (cf. 
\cite{johnstone2014}, \cite{maclanemoerdijk1992}, and \cite{wood2004}). By a 
relation $r\colon X\hto Y$ in a topos we mean a monomorphism $r\colon R\to 
X\times Y$, or, equivalently, a pair of jointly monic arrows $R\to X$, $R\to 
Y$. Such relations are ordered by $s\leq r$ if, and only if, $s\colon S\to 
X\times Y$ factors through $r\colon R\to X\times Y$ by a (necessarily) monic 
arrow $S\to R$. Relations $s\colon X\hto Y$ and $r\colon Y\hto Z$ compose by 
first taking a pullback as at left below
    \[\begin{tikzcd}
        {S\times_YR} & R && {S\times_YR} & {X\times Z} \\
        S & Y &&& {s\otimes r}
        \arrow[from=1-1, to=1-2]
        \arrow[from=1-2, to=2-2]
        \arrow[from=1-1, to=2-1]
        \arrow[from=2-1, to=2-2]
        \arrow[from=1-4, to=1-5]
        \arrow[two heads, from=1-4, to=2-5]
        \arrow[tail, from=2-5, to=1-5]
    \end{tikzcd}\]
and then taking an image of the product map formed by the leftmost and 
rightmost projections as on the right above. 

\begin{define} \label{definition:orders}
    An \textbf{internal preorder} is a pair $(A,r)$ consisting of an object $A$ 
    and a relation $r \colon A\hto A$ that is internally reflexive and 
    transitive in the sense that
        \begin{enumerate}
            \item $\Delta_A\leq r$
            \item $r\otimes r\leq r$
        \end{enumerate}
    both hold. These are the \textbf{reflexivity} and \textbf{transitivity} 
    axioms respectively. An \textbf{internal equivalence relation} is a 
    reflexive and transitive relation $r$ on $A$ that is also symmetric in the 
    sense that $r^\dagger\leq r$ where $r^\dagger$ is the reversed relation 
    given by composing $r$ with the canonical isomorphism $X\times X\cong 
    X\times X$ interchanging the factors of the product. \qed
\end{define}

Any internal preorder is an internal category. An internal functor is an 
order-preseving morphism. We will write `$\leq_r$' for the classifying arrow of 
$r$ and `$f\leq_r g$' to mean that the generalized elements $f,g\colon X\to A$ 
factor through the order $r$ as in 
    \[\begin{tikzcd}
        & R & 1 \\
        X & {A\times A} & \Omega
        \arrow["{\leq_r}"', from=2-2, to=2-3]
        \arrow["r", tail, from=1-2, to=2-2]
        \arrow["{!}", from=1-2, to=1-3]
        \arrow["\top", from=1-3, to=2-3]
        \arrow["{\langle f,g\rangle}"', from=2-1, to=2-2]
        \arrow[dashed, from=2-1, to=1-2]
    \end{tikzcd}.\]
Equivalently, this is the statement that $f\leq_rg = \top_X$ holds externally. 
Any internal preorder allows construction of \textit{upsegment} and 
\textit{downsegment} internal functors. These arise as certain transposes:

\begin{define}[\S 5.3 \cite{johnstone2014}] 
    \label{define:downsegmentupsegmentarrows}
    If $(A,r)$ is an internal preorder, the \textbf{downsegment} arrow is the 
    transpose of the classifying map of the order object:
        \[\begin{tikzcd}
            R & 1 && {A\times PA} & \Omega \\
            {A\times A} & \Omega && {A\times A}
            \arrow["{!}", from=1-1, to=1-2]
            \arrow["\top", from=1-2, to=2-2]
            \arrow["r"', tail, from=1-1, to=2-1]
            \arrow["{\leq_r}"', from=2-1, to=2-2]
            \arrow["1\times\downarrow", from=2-4, to=1-4]
            \arrow["{\in_A}", from=1-4, to=1-5]
            \arrow["{\leq_r}"', curve={height=12pt}, from=2-4, to=1-5]
        \end{tikzcd}.\]
    The \textbf{upsegment} arrow $\uparrow\colon L\to PL$ is the transpose of 
    the classifying map of the opposite order $r^\dagger$. \qed
\end{define}

\begin{define} \label{definition:PreorderedAdjunction}
    An \textbf{adjunction} between preordered objects $(P,r)$ and $(Q,s)$ in a 
    topos $\mscr E$ is a pair of morphisms $f\colon P\rightleftarrows Q: g$ 
    satisfying the two usual identities $1\leq_r gf$ and $fg\leq_s 1$. \qed
\end{define}

Both segment arrows are internal functors.

\begin{example} \label{example:internalquantifiersasadjoints}
    For any morphism $f\colon A\to B$, the direct and inverse s are 
    internally adjoint with $\exists_f\dashv Pf$. Any such inverse image also 
    has a right adjoint $\forall_f\colon PA\to PB$. At least when the morphism 
    is the unique one $A\to 1$, this has a nice description as the classifying 
    map of the top element of $PA$. In this case, denote the quantifiers as 
    $\forall_A,\exists_A \colon PA\rightrightarrows \Omega$. For a morphism 
    $\phi\colon X\to \Omega$ thought of as a proposition with a single argument 
    $\phi(x)$ varying over entities of type $X$, define $\forall x\phi(x)$ and 
    $\exists x\phi(x)$ to be the morphisms 
        \begin{equation*}
            \forall_X\hat\phi,\exists_X\hat\phi\colon 1\rightrightarrows \Omega    
        \end{equation*}
    where $\hat\phi$ is the transpose $1\to PX$ of $\phi$.    \qed
\end{example}

Certain structures in toposes are defined by such internal adjunctions. For 
example, we will see that the power-objects $PX$ in any topos are internal 
sup-lattices in the following sense.

\begin{define}[\S 5.3 \cite{johnstone2014}] \label{definition:suplattice}
    An \textbf{internal sup-lattice} is an internal poset $(A,r)$ together with an internal left adjoint $\bigvee\colon PA\to A$ for $\downarrow\colon A\to PA$. Refer to $\bigvee$ as the \textbf{join operator}. Such $(A,r)$ is \textbf{internally cocomplete}. A \textbf{homomorphism} of sup lattices $f\colon (A,r)\to (B,s)$ is a morphism $f\colon A\to B$ that preserves sups in the sense that 
        \[\begin{tikzcd}
            PA & PB \\
            A & B
            \arrow["{\exists_f}", from=1-1, to=1-2]
            \arrow["\bigvee"', from=1-1, to=2-1]
            \arrow["f"', from=2-1, to=2-2]
            \arrow["\bigvee", from=1-2, to=2-2]
        \end{tikzcd}\]
    commutes. Let $\slat(\mscr E)$ denote the category of sup lattices internal to $\mscr E$. \qed
\end{define}

Appropriate compositions of left and right adjoints have the structure of 
internal monads or comonads. Our internal interior and closure operators will 
be examples. The comonad and monad equations are precisely those which validate 
the axioms for necessity and possibility.

\begin{define} \label{definition:comonadmonad}
    A \textbf{comonad} on an preordered object $(X,r)$ is an order-preserving 
    morphism $d\colon X\to X$ satisfying both $d\leq_r 1_X$ and $d\leq_r dd$. 
    The first is the \emph{counit law} and the second is the 
    \emph{comultiplication law}. Dually, a \textbf{monad} $t\colon X\to X$ is 
    an order-preserving morphism satisfying the dual \emph{unit law} 
    $1_X\leq_r c$ and \emph{multiplication law} $cc\leq_r c$.\qed
\end{define}

\subsection{Topos Models of (IS4) and (IS5)}
\label{subsection:toposmodelsgenerally}

As previously discussed, our intention is not to present a novel version of 
intuitionistic modal logic, but merely to adopt a system with an established 
categorical semantics justifying its robustness. To this end, we internalize 
the categorical semantics of (IS4) from 
\cite[Definition 5]{biermandepaiva2000}. There a \emph{categorical model} is 
defined as a cartesian closed category with coproducts together with a monoidal 
comonad and a monad satisfying a strength axiom. Here \emph{monoidal comonad} 
means a lax cartesian monoidal comonad whose structure transformations are also 
cartesian monoidal. This definition involves a number of coherence axioms, all 
of which can be internalized. However, in the interest of readability, we will 
suppress explicit presentation of these since they are automatically validated 
by our internal preorder models. We do take the notion of internal category, 
functor and transformation for granted \cite[Ch.2]{johnstone2014}. 

Throughout fix a topos $\E$.

\begin{define} \label{def:categorical-model-IS4}
    Let $d\colon \mathbb C\to \mathbb C$ denote an internal monoidal comonad on 
    an internal category $\mathbb C$. A \textbf{$d$-strong} monad on 
    $\mathbb C$ is a monad $t$ on $\mathbb C$ together with an internal natural 
    transformation 
        \begin{equation*}
            s\colon d\times t \Rightarrow t(d\times 1)
        \end{equation*}
    satisfying the four axioms of \cite[Definition 4.2]{biermandepaiva2000}. 
    A \textbf{categorical model of (IS4)} is an internal cartesian closed 
    category $\mathbb C$ with coproducts together with a monoidal comonad 
    $d\colon \mathbb C\to \mathbb C$ and a $d$-strong monad $t\colon 
    \mathbb C\to\mathbb C$. A \textbf{categorical order model of (IS4)} is one 
    where $\mathbb C$ is an internal preorder. \qed
\end{define}

\begin{remark}
    In any preorder with appropriate endofunctors, strength takes the form of 
    the inequality 
        \begin{equation*}
            d\wedge t \leq t(d\wedge 1).
        \end{equation*}
    and the coherence axioms of the reference are vacuous. \qed
\end{remark}

A main result of \cite{biermandepaiva2000} is that when $\E = \set$ such a 
structure affords a sound categorical interpretation of (IS4) 
\cite[Proposition 7]{biermandepaiva2000}. For this reason, we think of the 
class of structures in \cref{def:categorical-model-IS4} as providing our 
categorical semantics of (IS4).

As our categorical semantics of (IS5), we take a version of the MAO-couples of 
\cite{reyes1991} and \cite{reyeszolfaghari1991}. Again we emphasize that we are 
not adopting any particular presentation of (IS5) since there appears to be 
little if any consensus about what such a system would look like. A defining 
characteristic however should be the (B)-axiom which is modeled by the 
requirement that the monad and comonad in a categorical model of (IS4) form an 
\emph{adjoint modality}. Since MAO-couples have such an adjoint modality as a 
defining feature, we take them as our semantics. We phrase the definition as in 
\cite[\S 2.4]{reyes1991} explicitly in terms of internal preorders.

\begin{define} \label{def:MAO-couple}
    An \textbf{MAO-couple} is an internal Heyting algebra $A$ together with 
    an idempotent comonad $d\colon A\to A$ and an idempotent monad 
    $t\colon A\to A$satisfying the two axioms 
        \begin{enumerate}
            \item $t\dashv d$
            \item $t(1\wedge d) = t\wedge d$
        \end{enumerate}
    where $\wedge$ is the internal meet on $A$. \qed
\end{define}

\begin{remark}
    The statement that $t$ is left adjoint to $d$ means that two inequalities 
    hold, namely, (B.1) $1\leq dt$, what we will call the \emph{unit law}, 
    and (B.2) $td \leq 1$, the \emph{counit law}. Our order examples always 
    satisfy (B.2) but (B.1) only under the further condition 
    that the order $(X,\leq)$ is an equivalence relation. \qed
\end{remark}

\begin{lemma} \label{lemma:MAO-are-models-IS4}
    Any MAO-couple is a categorical model of (IS4).
\end{lemma}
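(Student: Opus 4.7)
The plan is to verify, one by one, the three pieces of structure that \cref{def:categorical-model-IS4} requires of a categorical model of (IS4): an internal cartesian closed category with coproducts, a monoidal comonad, and a $d$-strong monad. Since any internal Heyting algebra viewed as an internal category is an internal preorder, the coherence axioms in the reference definitions collapse, so at each step only basic inequalities remain to be checked. Cartesian closure with coproducts comes for free from the definition of an internal Heyting algebra: the meet $\wedge$ is the product, Heyting implication is the exponential, the join $\vee$ gives binary coproducts, and $\top$ and $\bot$ provide terminal and initial objects.

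For the monoidal comonad piece, the comonad $d$ is part of the given data. The only remaining requirement is that $d$ preserve finite meets, and this follows from axiom~(1) of \cref{def:MAO-couple}: since $t \dashv d$, the functor $d$ is a right adjoint and therefore preserves all internal limits, giving $d(x \wedge y) = dx \wedge dy$ and $d\top = \top$. The monoidal coherence axioms are vacuous in a preorder, as noted in the Remark following \cref{def:categorical-model-IS4}.

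For the $d$-strong monad piece, the monad $t$ is again part of the MAO data, and by the same Remark the $d$-strength structure reduces to the single inequality $dx \wedge ty \leq t(dx \wedge y)$ for all generalized elements $x, y$. Here axiom~(2) of \cref{def:MAO-couple} enters: parsed as an equality of two-variable maps $A \times A \to A$, it reads $t(x \wedge dy) = tx \wedge dy$ for all $x, y$. Applying this with the two variables swapped and using commutativity of $\wedge$ yields $t(dx \wedge y) = dx \wedge ty$, which delivers the required strength, in fact with equality. The one place that calls for care is fixing the correct reading of axiom~(2) as an equality of two-variable functors; once that reading is pinned down, strength drops out immediately and no further coherence conditions need to be verified.
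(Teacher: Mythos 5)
Your proposal is correct and follows essentially the same route as the paper's proof: cartesian closure and coproducts from the Heyting algebra structure, monoidality of $d$ from the adjunction $t\dashv d$, and $d$-strength from the second MAO axiom read as the two-variable identity $t(x\wedge dy)=tx\wedge dy$. Your extra care in spelling out the variable swap for the strength inequality and in noting that the coherence conditions trivialize in a preorder just makes explicit what the paper leaves implicit.
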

\begin{proof}
    Any internal Heyting algebra is internally cartesian closed and cocomplete. 
    The adjoint modality forces $d$ to be internally (cartesian) monoidal. 
    Finally, the second axiom $t(1\wedge d) = t\wedge d$ implies that $t$ is 
    $d$-strong.
\end{proof}

\begin{remark}
    A categorical order model of (IS4) is \textbf{distributive} if additionally the \emph{distributive law} $td \leq dt$ holds. Distributivity is meant to encode a further relationship between the monad and comonad beyond the deducible $d \leq t$. (IS4) is not assumed to be distributive. The distributive law above is meant to interpret the further modal axiom (C) presented as 
        \begin{equation*}
            \Diamond\Box p \supset \Box\Diamond p
        \end{equation*}
    mentioned for example in discussion of the Diodorean system (S4.2) from 
    \cite{goldblatt1980}. Any symmetric order satisfies (C). Distributive laws 
    between a monad and a comonad have been studied 2-categorically 
    \cite{powerwatanabe1999}, \cite{powerwatanabe2002}, and \cite{koslowski2005}. 
    Generally, a complete statement of such distributivity requires further 
    coherence conditions. These are omitted here since they are satisfied 
    trivially by our preordered internal categories. Note that MAO-couples are 
    automatically distributive. \qed
\end{remark}

\subsection{Branching Space-time}

One of our main results is that given an equivalence relation $(X,\approx)$ in 
a topos, the power object $PX$ is an MAO-couple 
(\cref{theorem:equivalencerelationmodels(IS5)}). We shall close this section by 
introducing the setting that will yield one specific such model, namely, 
the \emph{branching space-time} of \cite{belnap1992}. The relation 
\emph{obviously undivided} (\cref{def:obviously-undivided}) will the relevant 
equivalence relation.

Let $\mscr W$ denote an ordinary poset, viewed as a category, called 
\textbf{our world}. Denote the order as `$\leq$'. Think of elements 
$e\in \mscr W$ as \emph{point events} with $e\leq e'$ meaning that from the 
perspective of $e'$, event $e$ did occur; and that from the perspective of $e$, 
the event $e'$ may or might occur. We think of such $e$ and $e'$ as being 
\emph{causally related}. Causality is formalized in the requirement that $\leq$ 
is a \emph{partial order}. In Minkowski space-time, causal precedence is a 
partial order given by certain future-directed null or future-directed timelike 
vectors. This is to be contrasted with the models of \cite{goldblatt1980} which 
are just orders and thus model \emph{chronological precedence} and are aptly 
named \emph{time-frames}.

Recall \cite[\S IX.1]{maclane1998} that a directed subset of a poset $\mscr P$ 
is a set of elements $D\subset\mscr P_0$ such that any $x,y\in D$ have a common 
element $z\in D$ with $x\leq z$ and $y\leq z$ -- briefly, if every pair of 
elements of $D$ have a common upper bound contained in $D$.

\begin{define}
    A \textbf{history} is a proper maximal directed subset $H\subset \mscr 
    W_0$. Let $H_e$ denote the set of histories of a point event $e\in\mscr W$. 
    \qed
\end{define}

By Zorn's Lemma, every directed set can be extended to a history. In 
particular, every point event belongs to some history. So, each $H_e$ is never 
empty. Histories are \emph{downward-closed}: given $e'\in H$ and $e\leq e'$, we 
must have $e\in H$ too. For $\lbrace e\rbrace \cup H$ is directed and so must 
be $H$ by maximality. So, as a result, each history is a \textit{maximal ideal} 
in our world; conversely, each such maximal ideal is a history. For this 
reason, we can write $\mspec(\mscr W)$ for the set of histories of our world. 
Although it is not strictly necessary for our theoretical development, we may 
take each history to be a \emph{Minkowski spacetime} -- that is, a 
4-dimensional real manifold with a peculiar metric attached to it. Histories in 
branching space-time organize into a presheaf on our world (cf. 
\cref{example:presheafandsheafcategoriesaretoposes}). We will call this 
presheaf the \textbf{histories presheaf}. The proof is essentially a matter of 
unpacking some previous observations.

\begin{lemma}\label{lemma:historiesformapresheaf}
    The assignment $H_{(-)}\colon \mscr W^{op}\to\set$ given by $e\mapsto H_e$ is a presheaf.
\end{lemma}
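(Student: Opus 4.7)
The plan is to construct the functor $H_{(-)}\colon \mscr W^{op}\to\set$ on morphisms and then check functoriality, both of which essentially fall out of the downward-closure of histories already observed in the paragraph preceding the lemma.

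First, I would fix a morphism in $\mscr W$, i.e.\ a relation $e\leq e'$, and define the required restriction map $H_{e'}\to H_e$. Given $H\in H_{e'}$, so that $H$ is a maximal directed subset of $\mscr W_0$ with $e'\in H$, downward-closure of histories (established in the preceding paragraph via the maximality argument applied to $\lbrace e\rbrace \cup H$) gives $e\in H$, hence $H\in H_e$. Thus the assignment can be taken to be the literal inclusion $H_{e'}\hookrightarrow H_e$, $H\mapsto H$.

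Next I would verify the two functoriality axioms. For the identity $e\leq e$, the associated map is the identity inclusion $H_e\hookrightarrow H_e$, so $H_{(-)}$ sends identities to identities. For a composable pair $e\leq e'\leq e''$, the composite $H_{e''}\hookrightarrow H_{e'}\hookrightarrow H_e$ is again the identity-on-elements inclusion $H_{e''}\hookrightarrow H_e$, which is what the composite $e\leq e''$ produces directly. Contravariance is built in because $e\leq e'$ yields a map in the direction $H_{e'}\to H_e$.

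There is essentially no hard step here: the entire content is that downward-closure of maximal directed subsets guarantees $H_{e'}\subseteq H_e$ whenever $e\leq e'$, so the presheaf structure is given by honest set-inclusions. The only thing to be careful about is the direction of the arrows, which is why the functor must be defined on $\mscr W^{op}$ rather than $\mscr W$.
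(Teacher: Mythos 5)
Your proposal is correct and matches the paper's argument: both use downward-closure of histories to get the inclusion $H_{e'}\subseteq H_e$ for $e\leq e'$ and then observe that functoriality (your identity and composition checks) follows from reflexivity and transitivity of the causal order. You simply spell out the functoriality verification in slightly more detail than the paper does.
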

\begin{proof}
    That histories are downward-closed amounts to the fact that for causally related events $e_1\leq e_2$, it follows that $H_{e_2}\subset H_{e_1}$ holds. By reflexivity and transitivity of the causal order, this assignment defines a presheaf on our world.
\end{proof}

A running example that will figure in later considerations is the following. It 
is admittedly contrived, but is meant to illustrate the simplest example of 
non-trivial branching.

\begin{example} \label{example:simplesbranchingspacetime}
    Let $\mscr W$ denote the category with four objects and three non-identity 
    arrows as suggested by the diagram
        \[\begin{tikzcd}
            && {e_1} && {e_2} \\
            {\mscr W} & {:=} && {e_0} \\
            &&& {e_{-1}}
            \arrow["\alpha", from=2-4, to=1-3]
            \arrow["\beta"', from=2-4, to=1-5]
            \arrow["\gamma"', from=3-4, to=2-4]
        \end{tikzcd}\]
    In particular $\mscr W$ is a poset. It is not intended to be a \emph{real} 
    branching space-time, but just a toy model that will be used to illustrate 
    our later examples. Presumably any space-time with nontrivial branching 
    would admit at least one order-preserving morphism from $\mscr W$. Let 
    $h_1$ and $h_2$ denote the histories 
        \begin{equation*}
            h_1 = \lbrace e_{-1}\leq e_0\leq e_1 \rbrace \qquad h_2 = \lbrace e_{-1}\leq e_0\leq e_1\rbrace
        \end{equation*}
    thought of respectively as \emph{$h_1$ occurs} and \emph{$h_2$ occurs}. 
    Note that these are the only two histories. The histories presheaf 
    therefore makes the following assignments:
        \begin{align*}
            H_{e_{-1}} &= H_{e_0} = \lbrace h_1, h_2\rbrace\\
            H_{e_1} &= \lbrace h_1\rbrace \\
            H_{e_2} &= \lbrace h_2\rbrace 
        \end{align*}
    A complete picture of the image of $H$ in $\set$ is therefore 
        \[\begin{tikzcd}
            && {\lbrace h_1\rbrace} && {\lbrace h_1\rbrace} \\
            H & {=} && {\lbrace h_1,h_2\rbrace} \\
            &&& {\lbrace h_1,h_2\rbrace}
            \arrow["{-\cdot\alpha}"', hook, from=1-3, to=2-4]
            \arrow["{-\cdot\beta}", hook', from=1-5, to=2-4]
            \arrow["{-\cdot\gamma}", from=2-4, to=3-4]
        \end{tikzcd}\]
    where the hooked arrows are the singleton inclusions and $-\cdot\gamma$ is 
    in fact just the identity arrow. We shall return to this example 
    particularly in \cref{subsection:firstorderconsiderations} where it will be 
    used to study first-order modal formulas. \qed
\end{example}

\section{Internalized Alexandrov Topology}
\label{section:internalizedalexandrovtopology}

This section presents our foundational theoretical results. As the title 
suggests, these consist in internalizing the Alexandrov topology in an 
arbitrary topos. The point is that certain interior and closure operators 
on power objects can be associated to any suitably ordered object in a topos. 
Once these are constructed, we prove internalized identities that will 
eventually validate the laws of intuitionistic S4 and S5 under suitable 
interpretation.

\subsection{Interior and Closure Internally}
\label{subsection:interiorandclosureinternally}

Internal orders and their up- and downsegment arrows 
(\cref{define:downsegmentupsegmentarrows}) lead to what will be our 
semantics of the $\Box$-operator. Note that this definition does not require 
the relation in $(X,r)$ to be reflexive or transitive.

\begin{define} \label{definition:internalinterioroperator}
    The \textbf{interior operator} associated to a relation $(X,r)$ is 
    the composite morphism
        \begin{equation*}
            \interior:=P(\uparrow)\circ \downarrow\colon PX\to PX. 
        \end{equation*}
    where $P(\uparrow)\colon P^2X\to PX$ is the result of the power-object 
    functor $P\colon\mscr E^{op}\to\mscr E$ applied to the upsegment arrow 
    $\uparrow\colon X\to PX$ and $\downarrow\colon P^2X\to PX$ is the 
    downsegment arrow for the internal Heyting algebra $PX$ as in 
    \cref{define:downsegmentupsegmentarrows}.  \qed
\end{define}

\begin{remark}
    Note that in ordinary set theory, when the relation is a preorder, using 
    the inverse image equation 
    \cref{equation:settheoreticinverseimagecalculation}, this composite has the 
    effect 
        \begin{equation}\label{equation:rightadjointsuplatticehomomforalexandrovtopology}
            P(\uparrow)\!\downset (S) = \lbrace x\in X\mid \upset x\subset S\rbrace = \lbrace x\in X\mid x\leq y \text{ implies } y\in S\rbrace
        \end{equation}
    for any $S\subset X$, which is precisely the interior in the Alexandrov 
    topology on $(X,r)$ as calculated in 
    \cref{equation:interiorAlexandrovTopologyElementDescription}. One purpose 
    of this section is to justify the notation by showing that $\interior$ is 
    an internal comonad in any topos $\mscr E$. This will be done explicitly in 
    \cref{subsection:validatingcomonadaxioms}. \qed
\end{remark}

Now, we shall develop the closure operator on $PX$. This depends on the fact 
that each power-object $PX$ is an internal sup lattice as in 
\cref{definition:suplattice}. One construction of the join is given in 
\cite[\S B2.3]{johnstone2002} using the machinery of fibered limits and colimits. This is theoretically elegant, but would require a lengthy 2-categorical digression to unpack the construction in a useful way. Another approach is that of \cite[\S 5.4]{wood2004} but does not quite work with our present set-up. The most conducive and relevant explicit construction is given in \cite[\S 5.3]{johnstone2014} and is due originally to \cite{mikkelsen1976}. 

\begin{construction} \label{construction:internaljoin}
    The first point to consider is that $\Omega$ itself is an internal sup lattice. That it is an internal Heyting algebra is proved for example in 
    \cite{maclanemoerdijk1992}. The internal join is then defined as
        \begin{equation} \label{equation:joinoperatoronOmega}
            \textstyle \bigvee_\Omega := \Omega^\top\colon P\Omega \to \Omega
        \end{equation}
    and shown to make $\Omega$ internally cocomplete in the proof of 
    \cite[Lemma 5.35]{johnstone2014}. More precisely, the join operator on 
    $\Omega$ is constructed as a transpose as in
        \[\begin{tikzcd}
            {1\times \Omega^1} & \Omega \\
            {1\times P\Omega} & {\Omega\times P\Omega}
            \arrow["{\in_1}", from=1-1, to=1-2]
            \arrow["{1\times\Omega^\top}", from=2-1, to=1-1]
            \arrow["\top\times1"', from=2-1, to=2-2]
            \arrow["{\in_\Omega}"', from=2-2, to=1-2]
        \end{tikzcd}\]
    as in the definition of the contravariant exponentiation functor 
    $A^-\colon \mscr E^{op}\to \mscr E$ for any object $A$. The work of the 
    general construction of the join on $PX$ is arriving at a subsidiary 
    classifying morphism $\phi$ whose transpose composed with the join on 
    $\Omega$ transposes to the desired join on $PX$. For this, consider the 
    subobject classified by the membership arrow on $PX$: 
        \[\begin{tikzcd}
            {U_{PX}} & 1 \\
            {PX\times P^2X} & \Omega.
            \arrow["{!}", from=1-1, to=1-2]
            \arrow["{\langle c,d\rangle}"', from=1-1, to=2-1]
            \arrow["{\in_{PX}}"', from=2-1, to=2-2]
            \arrow["\top", from=1-2, to=2-2]
        \end{tikzcd}.\]
    Set-theoretically, $U_{PX}$ is the collection of pairs consisting of a 
    subset of $X$ and a collection of subsets of $X$ to which the former 
    belongs; $c$ projects that set while $d$ projects the collection. Now, form 
    the product arrow of the transposes of $c$ and of $1_X\times d$, and let 
    $\phi$ denote the classifying arrow of its image:
        \[\begin{tikzcd}
            {X\times U_{PX}} & I & 1 \\
            & {\Omega\times X\times P^2X} & \Omega
            \arrow["{!}", from=1-2, to=1-3]
            \arrow["m", tail, from=1-2, to=2-2]
            \arrow["\phi"', from=2-2, to=2-3]
            \arrow["{\langle 1_X\in_Xc,1_X\times d\rangle}"'{pos=0.2}, curve={height=12pt}, from=1-1, to=2-2]
            \arrow["e", two heads, from=1-1, to=1-2]
            \arrow["\top", from=1-3, to=2-3]
        \end{tikzcd}.\]
    Transpose $\phi$ to $\hat\phi\colon X\times P^2X\to P\Omega$ and then the 
    join on $PX$ is the unique arrow $\bigvee_{PX}$ making the diagram
        \[\begin{tikzcd}
            {X\times PX} & \Omega \\
            {X\times P^2X} & P\Omega
            \arrow["\hat\phi"', from=2-1, to=2-2]
            \arrow["{\textstyle\bigvee_\Omega}"', from=2-2, to=1-2]
            \arrow["{1\times\textstyle\bigvee_{PX}}", from=2-1, to=1-1]
            \arrow["{\in_X}", from=1-1, to=1-2]
        \end{tikzcd}\]
    commute. The point of \cite[Proposition 5.36]{johnstone2014} is showing that this indeed provides an appropriate internal left adjoint. \qed
\end{construction}

\begin{remark} \label{remark:commentsonsemanticsofjoinoperator}
    In the next subsection we will show that this join operator has a 
    reasonably intuitive semantics. We shall do this in the next subsection. 
    For now, we discuss a subtlety. Set theoretically, an element or entity $x$ 
    is a member of a union of sets in a collection $\mathcal S$ if, and only 
    if, there is a particular set in that collection $\mathcal S$ of which $x$ 
    is a member: 
        \begin{equation*}
            x\in\bigcup_{X\in \mathcal S} X \text{ if, and only if, there is } X' \in \mathcal S \text{ such that } x\in X'.
        \end{equation*}
    From the internal construction above, however, it is not at all immediately 
    clear that the existential part of the above statement is a feature of the 
    semantics of the elementary join on $PX$. As a first pass, putting together 
    the construction of the join on $PX$ and the join on $\Omega$, we have a 
    diagram 
        \[\begin{tikzcd}
            {X\times PX} & \Omega & \Omega \\
            {X\times P^2X} & P\Omega & {\Omega\times P\Omega}
            \arrow["{\in_X}", from=1-1, to=1-2]
            \arrow[Rightarrow, no head, from=1-2, to=1-3]
            \arrow["1\times\bigvee", from=2-1, to=1-1]
            \arrow["\hat\phi"', from=2-1, to=2-2]
            \arrow["{\langle\top_{P\Omega},1\rangle}"', from=2-2, to=2-3]
            \arrow["{\in_{\Omega}}"', from=2-3, to=1-3]
            \arrow["\bigvee"', from=2-2, to=1-2]
        \end{tikzcd}\]
    implying that for any element $\langle x,s\rangle\colon W\to X\times P^2X$, 
    we have an equivalence
        \begin{equation} 
            \label{equation:preliminarycharacterizationofmembershipininternaljoin}
            \top_W= x\in_{PX}\bigvee s \text{ if, and only if, } 
            \top_W= \top_W\in_{\Omega} \hat\phi(x,s).
        \end{equation}
    It turns out with a little further work that the existential element can in 
    fact be extracted from the construction. This will be done in 
    \cref{proposition:forcingsemanticsofinternaljoinoperator} 
    once we have introduced Kripke-Joyal semantics.  \qed
\end{remark}

\begin{define} \label{definition:internalclosureoperator}
    For any relation $(X,r)$, the \textbf{closure operator} associated to $r$ is 
    the composite
        \begin{eqnarray}
            \closure:=\textstyle\bigvee\exists_\downarrow\colon PX\to PX
        \end{eqnarray}
    where $\bigvee$ is the join on $PX$ and $\exists_\downarrow$ is the direct 
    image of $\downarrow\colon X\to PX$. \qed
\end{define}

\begin{remark}
    Set-theoretically, using the computation in 
    \cref{equation:settheoreticdirectimagecalculation}, this operator has the 
    effect on a subset $S\subset X$
        \begin{align}
            \textstyle \bigvee_{PX}\exists_\downarrow S &= \bigcup \lbrace T\in PX\mid T= \downset s \text{ for some } s\in S\rbrace \\
            &= \bigcup \lbrace \downset s\mid s\in S\rbrace \\
            &= \lbrace x\in X\mid x\leq s \text{ for some } s\in S\rbrace
        \end{align}
    which is precisely the Alexandrov closure of $S$ as calculated in \cref{equation:closureAlexandrovTopologyElementDescription}. \qed
\end{remark}

\subsection{Operator Semantics}
\label{subsection:operatorsemantics}

Here we extend the well-known Kripke-Joyal forcing semantics to the interior 
and closure operators of the previous section. This does require some fairly 
detailed calculations, but the upshot is that we obtain familiar-looking 
semantics of interior and closure, as well as satisfying proofs that the 
constructions validate the monad and comonad axioms given appropriate 
conditions on the underlying relation. Along the way, we will give some 
subsidiary results facilitating our calculations. Although the results for the 
closure and interior operators appear to be new, we make no claim to 
originality with regard to the required lemmas. Their importance in the 
development however warrants complete proofs.

Define the \textbf{forcing relation} `$W\Vdash\phi(a)$', read \emph{$W$ forces 
$\phi$ at $a$}, to hold just in case the generalized element $a\colon W\to X$ 
factors through the subobject classified by the proposition $\phi\colon X\to 
\Omega$ as in 
    \[\begin{tikzcd}
        & {\lbrace x\mid \phi\rbrace} & 1 \\
        W & X & \Omega
        \arrow["{!}", from=1-2, to=1-3]
        \arrow["m", from=1-2, to=2-2]
        \arrow["\phi"', from=2-2, to=2-3]
        \arrow["\top", from=1-3, to=2-3]
        \arrow["a"', from=2-1, to=2-2]
        \arrow["\exists", dashed, from=2-1, to=1-2]
    \end{tikzcd}.\]
In other words, \emph{$W$ forces $\phi$ at $a$} is understood as the relation 
    \begin{equation*}
        W\Vdash\phi(a) \text{  if, and only if,  } \phi(a) = \top_W.
    \end{equation*}
A central result of topos theory is that this relation extends to the logic of 
$\Omega$. We will only need the special cases of conjunction, implication and 
quantification, so these are quoted from standard references in the following 
result.

\begin{theo}[Forcing Semantics] \label{theorem:kripkejoyalsemantics}
    Let $a\colon W\to X$ denote any element and $\phi,\psi \colon X\rightrightarrows \Omega$ propositions on $X$. Then
        \begin{enumerate}
            \item $W\Vdash \phi(a)\wedge \psi(a)$ if, and only if, $W\Vdash \phi(a)$ and $W\Vdash \psi(a)$
            \item $W\Vdash \phi(a)\Rightarrow\psi(a)$ if, and only if, $V\Vdash \psi(ab)$ whenever $V\Vdash \phi(ab)$ for any arrow $b\colon V\to W$.
        \end{enumerate}
    If $\phi\colon X\times Y\to \Omega$ is a proposition in two unknowns, then 
        \begin{enumerate}
            \item $W\Vdash \forall y \phi(a,y)$ if, and only if, $V\Vdash \phi(ap, b)$ for all generalized elements $p\colon V\to W$ and $b\colon V\to Y$
            \item $W\Vdash \exists y \phi(a,y)$ if, and only if, $V\Vdash \phi(ap, b)$ for some epimorphism $p\colon V\twoheadrightarrow W$ and some element $b\colon V\to Y$.
        \end{enumerate}
\end{theo}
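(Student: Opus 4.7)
The plan is to prove each clause by unwinding the defining equivalence $W \Vdash \phi(a) \Leftrightarrow \phi\circ a = \top_W$, i.e.\ by the universal property of the subobject classifier, $a$ factors through the subobject $\{x\mid \phi\} \hookrightarrow X$ classified by $\phi$. Each of the four clauses then comes down to recognizing how the relevant operation on $\Omega$ (either defined as a classifying map or as an adjoint) reshapes this factorization condition.

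For conjunction, I would use that $\wedge\colon \Omega\times\Omega\to\Omega$ is by definition the classifying morphism of $\langle\top,\top\rangle\colon 1\to \Omega\times\Omega$. Thus $(\phi\wedge\psi)\circ a = \top_W$ iff $\langle \phi a,\psi a\rangle\colon W\to\Omega\times\Omega$ factors through $\langle\top,\top\rangle$, and this happens iff $\phi a = \top_W$ and $\psi a = \top_W$ individually. For implication, one uses that $\Rightarrow$ classifies the internal order subobject of $\Omega\times\Omega$, so that $W\Vdash \phi(a)\Rightarrow\psi(a)$ says exactly that $\phi a \leq \psi a$ in the external Heyting algebra $\E(W,\Omega)$. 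The forward direction is then immediate by pulling back along any $b\colon V\to W$. The converse takes $b = 1_W$ to read off $\phi a\leq \psi a$ directly; more carefully, one expresses the inequality in $\E(W,\Omega)$ via the characterization of the internal order as the equalizer of $\wedge$ and $\pi_1$, and verifies that the universal condition (over all $b$) forces $\phi a$ and $\phi a\wedge\psi a$ to agree after evaluation, hence as morphisms into $\Omega$.

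For the universal quantifier, I would recall that $\forall y\,\phi(a,y)$ is by construction $\forall_Y\hat\phi\circ a$, where $\forall_Y\colon PY\to\Omega$ classifies $\top\colon 1\to PY$. Thus $W\Vdash \forall y\,\phi(a,y)$ iff $\hat\phi a$ factors through $\top\colon 1\to PY$, which transposes across the isomorphism $\E(W,PY)\cong\E(W\times Y,\Omega)$ to the condition $\phi\circ\langle a\pi_1,\pi_2\rangle = \top_{W\times Y}$. The forward direction then follows by composing with any $\langle p,b\rangle\colon V\to W\times Y$. The converse is obtained by applying the hypothesis to the universal generalized element, namely $p = \pi_1$, $b = \pi_2$ out of $V = W\times Y$, and recognizing that every $\langle p,b\rangle$ arises as a pullback of this universal case.

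For the existential quantifier, the key fact is that $\exists y\,\phi(a,y)$ classifies the image of the subobject $\{(x,y)\mid\phi\}\hookrightarrow X\times Y$ under the projection to $X$, pulled back along $a$. So $W\Vdash \exists y\,\phi(a,y)$ iff $a$ factors through this image. Pulling the image factorization back along $a$ and using that epimorphisms in a topos are stable under pullback yields an epi $p\colon V\twoheadrightarrow W$ together with a lift to the subobject classified by $\phi$, which supplies the required $b\colon V\to Y$ with $\phi(ap,b) = \top_V$. Conversely, given such $p$ and $b$, the composite $\langle ap,b\rangle$ factors through $\{(x,y)\mid\phi\}$, and since $p$ is epi, $a$ factors through the image. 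The main obstacle in the whole proof is precisely this existential case: one must be careful that the epi $p$ produced is the pullback of the canonical image-cover, and that stability of epis under pullback (a standard topos-theoretic fact) is invoked correctly, since without it the converse direction fails to reconstruct the factorization of $a$ through the image.
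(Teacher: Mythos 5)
The paper itself does not prove this theorem; it simply cites the standard reference (Mac Lane--Moerdijk, Theorem VI.6.1), and your proposal reconstructs essentially that standard argument by unwinding classifying maps and subobjects. Your handling of conjunction (via $\wedge$ classifying $\langle\top,\top\rangle$), of $\forall$ (transposing to $\phi\circ(a\times 1_Y)=\top_{W\times Y}$ and getting the converse from the universal instance $V=W\times Y$, $p=\pi_1$, $b=\pi_2$), and of $\exists$ (image factorization, pullback-stability of epimorphisms for the forward direction, and the epi-through-a-mono argument for the converse) is correct and is exactly the textbook route.

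The one genuine flaw is the converse of the implication clause. Taking $b=1_W$ does not ``read off'' $\phi a\leq\psi a$: instantiating the hypothesis at $1_W$ only says that if $\phi a=\top_W$ then $\psi a=\top_W$, which carries no information when $\phi a$ is not the top element, and your ``more careful'' gloss (that the universal condition forces $\phi a$ and $\phi a\wedge\psi a$ to ``agree after evaluation'') never identifies which evaluation does the work. The standard repair is to instantiate the hypothesis at the specific monomorphism $b\colon V\rightarrowtail W$ obtained by pulling back $\top$ along $\phi a$, i.e.\ the subobject of $W$ classified by $\phi(a)$. Then $V\Vdash\phi(ab)$ holds by construction, so the hypothesis yields $V\Vdash\psi(ab)$, which says precisely that the subobject classified by $\phi a$ is contained in the subobject classified by $\psi a$; under $\mathrm{Sub}(W)\cong\E(W,\Omega)$ this is $\phi a\leq\psi a$, i.e.\ $\langle\phi a,\psi a\rangle$ factors through the order subobject, i.e.\ $W\Vdash\phi(a)\Rightarrow\psi(a)$. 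With that single substitution your argument is complete and coincides with the proof the paper points to.
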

\begin{proof}
    This is proved in standard texts. For example, see the proof of 
    \cite[Theorem VI.6.1]{maclanemoerdijk1992}.
\end{proof}

The following result is probably folkloric, but its subsequent utility suggests 
that a proof is appropriate. This allows element-wise checks of internal subset 
containments and will be used in many arguments.

\begin{cor} \label{corollary:elementwisesubsetcontainment}
    For arrows $f,g\colon W\rightrightarrows PX$, we have 
        \begin{equation*}
            W\Vdash f\leq g \text{ if, and only if, } W\Vdash \forall x ((x\in_X f)\Rightarrow_\Omega (x\in_X g))
        \end{equation*}
\end{cor}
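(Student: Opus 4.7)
The plan is a short chain of equivalences that rides on the Heyting algebra isomorphism \eqref{equation:isomorphismofHeytingalgebras} and concludes via the forcing semantics of $\forall$ from \cref{theorem:kripkejoyalsemantics}.

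First I would unpack the left-hand side. By the definition of the internal order on $PX$ as the equalizer of $\wedge$ and the first projection, the assertion $W\Vdash f\leq g$ holds if and only if $\langle f,g\rangle \colon W\to PX\times PX$ factors through this equalizer, equivalently $f\wedge g = f$ as arrows $W\to PX$. Transposition via \eqref{equation:isomorphismofHeytingalgebras} is a Heyting algebra isomorphism $\E(W,PX)\cong \E(W\times X,\Omega)$ sending $f$ to the membership predicate $(x\in_X f) := \,\in_X\circ(1\times f)$. Since this isomorphism preserves and reflects the Heyting structure, $f\wedge g = f$ is equivalent to $(x\in_X f)\wedge (x\in_X g) = (x\in_X f)$ in $\E(W\times X,\Omega)$, which in turn is equivalent to $((x\in_X f)\Rightarrow_\Omega (x\in_X g)) = \top_{W\times X}$.

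Next I would translate the right-hand side. By the $\forall$-clause of \cref{theorem:kripkejoyalsemantics}, the condition $W\Vdash \forall x((x\in_X f)\Rightarrow_\Omega (x\in_X g))$ amounts to $V\Vdash (b\in_X fp)\Rightarrow_\Omega (b\in_X gp)$ for every $p\colon V\to W$ and $b\colon V\to X$, which is just the assertion that every composite of the arrow $((x\in_X f)\Rightarrow_\Omega (x\in_X g))\colon W\times X\to \Omega$ with $\langle p,b\rangle\colon V\to W\times X$ equals $\top_V$. Taking $\langle p,b\rangle = 1_{W\times X}$ forces the map itself to equal $\top_{W\times X}$, while any such composite of $\top_{W\times X}$ is automatically $\top_V$. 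Hence the right-hand side is also equivalent to $((x\in_X f)\Rightarrow_\Omega (x\in_X g)) = \top_{W\times X}$, completing the chain.

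There is no real obstacle beyond bookkeeping. The only subtle point is to use \eqref{equation:isomorphismofHeytingalgebras} both as a Heyting algebra isomorphism (so it converts the internal $\leq$ on $PX$ to the pointwise one on $\Omega$) and as natural in $W$ (so that $fp$ transposes to the precomposition of $(x\in_X f)$ with $p\times 1$, which is what is needed to evaluate the $\forall$-clause at the arbitrary stage $V\to W$). Both are built into the statement of \eqref{equation:isomorphismofHeytingalgebras} and used already elsewhere in the paper.
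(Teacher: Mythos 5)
Your proposal is correct and follows essentially the same route as the paper: both pass through the Heyting algebra isomorphism \eqref{equation:isomorphismofHeytingalgebras} to convert the internal order on $PX$ into the statement that $(x\in_X f)\Rightarrow_\Omega(x\in_X g)$ equals $\top$, using that implication classifies the order on $\Omega$, and then match this against the $\forall$-clause of \cref{theorem:kripkejoyalsemantics}. The only difference is cosmetic: you unpack $W\Vdash f\leq g$ via the equalizer description ($f\wedge g=f$) and dispatch the quantifier clause by specializing at the identity of $W\times X$, where the paper instead cites the implication and universal quantification clauses directly.
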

\begin{proof}
    Using the transpose isomorphism \cref{equation:isomorphismofHeytingalgebras}, 
    we have 
        \begin{equation*}
            W\Vdash f\leq g \text{ if, and only if, } X\times W\Vdash (1_X\in_X f)\leq (1_X\in_Xg)
        \end{equation*}
    in the order on $\Omega$. But since the implication operator on $\Omega$ is 
    the classifying arrow of the order, we thus have
        \begin{equation*}
            W\Vdash f\leq g \text{ if, and only if, } X\times W\Vdash ((1_X\in_X f)\Rightarrow_\Omega (1_X\in_Xg))
        \end{equation*}
    which now implies the conclusion by the implication and universal 
    quantification clauses of \cref{theorem:kripkejoyalsemantics}.
\end{proof}

As a first application, we have the Kripke-Joyal semantics of the interior 
operator (\cref{definition:internalinterioroperator}).

\begin{prop}[Forcing Semantics of Interior] \label{proposition:interiorsemantics}
    If $(X,r)$ is a relation, then 
        \begin{equation*}
            W\Vdash a\in_X \interior (s) \text{ if, and only if, } W\Vdash \forall y((a\leq y) \Rightarrow_\Omega (y\in_X s))
        \end{equation*}
    for any elements $\langle a,s\rangle \colon W\rightrightarrows X\times PX$.
\end{prop}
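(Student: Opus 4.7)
The plan is to unfold the definition of $\interior = P(\uparrow)\circ\downarrow$ by applying the results on inverse images of power-objects together with the defining transposition properties of the up- and downsegment arrows, and then invoke \cref{corollary:elementwisesubsetcontainment} in the final step.

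First I would compute $a\in_X \interior(s) = a\in_X P(\uparrow)(\downarrow(s))$. By \cref{lemma:calculateinverseanddirectimage}(1) applied to $\uparrow\colon X\to PX$, this equals $\uparrow(a)\in_{PX}\!\downarrow\!(s)$. Next, by the defining property of the downsegment arrow on $PX$ given in \cref{define:downsegmentupsegmentarrows}, the composite $\in_{PX}\circ(1\times\downarrow)$ is the classifying morphism of the internal order on $PX$; so $\uparrow(a)\in_{PX}\downarrow(s)$ equals the order proposition $(\uparrow(a)\leq s)$ in the Heyting algebra $PX$. Thus
    \begin{equation*}
        W\Vdash a\in_X\interior(s)\ \iff\ W\Vdash\ \uparrow\!(a)\leq s.
    \end{equation*}

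Now I would apply \cref{corollary:elementwisesubsetcontainment} to the right-hand side, giving
    \begin{equation*}
        W\Vdash\ \uparrow\!(a)\leq s\ \iff\ W\Vdash \forall y\bigl((y\in_X \uparrow\!(a))\Rightarrow_\Omega (y\in_X s)\bigr).
    \end{equation*}
To finish, I have to identify the proposition $y\in_X \uparrow(a)$ with $a\leq y$. By \cref{define:downsegmentupsegmentarrows}, the upsegment is the transpose of the classifying map of the opposite relation $r^\dagger$; thus $\in_X\circ(1\times\uparrow)$ classifies $r^\dagger$, and unwinding the symmetry of the product gives $(y\in_X \uparrow(a)) = (a\leq_r y)$. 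Substituting this yields the claim.

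The only potential pitfall is a bookkeeping one, namely keeping straight the order of the two arguments of $r^\dagger$ relative to the product $X\times X$ so that the upsegment evaluated at $a$ really describes $\{y\mid a\leq y\}$ and not $\{y\mid y\leq a\}$; beyond that, the proof is a direct chain of equalities from the constructions already established.
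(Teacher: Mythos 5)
Your proposal is correct and follows essentially the same route as the paper's proof: both establish the identity $(a\in_X\interior(s)) = (\upset a\leq_{PX} s)$ — the paper via one pasted transposition diagram, you via \cref{lemma:calculateinverseanddirectimage}(1) plus the defining property of $\downarrow$ on $PX$, which is the same computation — and then finish with \cref{corollary:elementwisesubsetcontainment} and the construction of $\uparrow$ as the transpose of the classifier of $r^\dagger$.
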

\begin{proof}
    By construction, $\interior\colon PX\to PX$ is the transpose of $\uparrow
    \leq 1$ since it occurs in the diagram 
        \[\begin{tikzcd}
            {X\times PX} && \Omega \\
            {X\times P^2X} & {PX\times P^2X} & \Omega \\
            {X\times PX} & {PX\times PX}
            \arrow["{\in_X}", from=1-1, to=1-3]
            \arrow["{1\times P(\uparrow)}", from=2-1, to=1-1]
            \arrow["{1\times \downarrow}", from=3-1, to=2-1]
            \arrow["{\uparrow\times 1}", from=2-1, to=2-2]
            \arrow["{\in_{PX}}", from=2-2, to=2-3]
            \arrow[Rightarrow, no head, from=2-3, to=1-3]
            \arrow["\uparrow\times1"', from=3-1, to=3-2]
            \arrow["1\times\downarrow"', from=3-2, to=2-2]
            \arrow["\leq"', curve={height=12pt}, from=3-2, to=2-3]
        \end{tikzcd}.\]
    Consequently, we have a characterization 
        \begin{equation*}
            (x\in_X \interior(s)) = (\upset x \leq_{PX} s)
        \end{equation*}
    for any $\langle x,s\rangle \colon W\to X\times PX$. Accordingly, we have 
    the equivalence
        \begin{equation*}
            W\Vdash (x\in_X \interior(s)) \text{ if, and only if, } W\Vdash (\upset x \leq_{PX} s).
        \end{equation*}
    The desired result now follows by the element-wise characterization of subset 
    containment in  \cref{corollary:elementwisesubsetcontainment} and the 
    construction of $\uparrow\colon X\to PX$.
\end{proof}

Our next task is to give analogous semantics of the closure operator. Since the 
closure is a composite of an internal join and an internal direct image, we 
shall study the semantics of each in turn. The result for direct image is 
essentially that any $a$ in $\exists_fs$ is of the form $a=fb$ for some $b$ in 
$s$, as is true in ordinary set theory.

\begin{prop} \label{lemma:forcingsemanticsofdirectimage}
    For any morphism $f\colon A\to B$, we have 
        \begin{equation*}
            W\Vdash a\in_B\exists_f(s) \text{ if, and only if, } W\Vdash \exists y ((a =_B fy)\wedge (y\in_A s))
        \end{equation*}
    for any $\langle a,s\rangle \colon W\to A\times PA$.
\end{prop}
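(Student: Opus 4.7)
The plan is to unpack the construction of $\exists_f$ in terms of the image factorization $U_A \twoheadrightarrow I \hookrightarrow B\times PA$ of the composite $(f\times 1)m$, and then to translate factorizations through $I$ and through $U_A$ into forcing statements. Recall that $\exists_f$ is defined as the transpose of the classifying morphism $\phi$ of the image subobject $I\hookrightarrow B\times PA$, while $U_A\hookrightarrow A\times PA$ is the subobject classified by $\in_A$. Hence, for any arrows $(b,t)\colon V\to B\times PA$ and $(y,u)\colon V\to A\times PA$, one has the two key equivalences
\begin{equation*}
    V\Vdash b\in_B\exists_f(t) \iff (b,t) \text{ factors through } I,
\end{equation*}
\begin{equation*}
    V\Vdash y\in_A u \iff (y,u) \text{ factors through } U_A.
\end{equation*}
These are immediate from the definition of the forcing relation via the classifying squares.

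For the \emph{backward} direction, I would start from an epimorphism $p\colon V\twoheadrightarrow W$ and a generalized element $y\colon V\to A$ witnessing $V\Vdash ap =_B fy$ and $V\Vdash y\in_A sp$. The second condition says $(y,sp)$ factors through $U_A$, so $(fy,sp)=(f\times 1)m\circ(\text{lift})$ factors through $I$ via the image epi $e\colon U_A\twoheadrightarrow I$. Since $fy=ap$, this shows $(a,s)p$ factors through the mono $I\hookrightarrow B\times PA$. By the orthogonality of epis and monos in the topos $\E$, one obtains a diagonal fill-in exhibiting $(a,s)$ itself as factoring through $I$, which gives $W\Vdash a\in_B\exists_f(s)$.

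For the \emph{forward} direction, the idea is to manufacture the required cover by a pullback. Given $W\Vdash a\in_B\exists_f(s)$, we get a factorization $h\colon W\to I$ with $(a,s)= i\circ h$ where $i\colon I\hookrightarrow B\times PA$. Form the pullback of $h$ along the image epimorphism $e\colon U_A\twoheadrightarrow I$; since epis are stable under pullback in a topos, this yields an epi $p\colon V\twoheadrightarrow W$ together with a map $k\colon V\to U_A$ satisfying $ek=hp$. Let $(y,s')\colon V\to A\times PA$ denote the composite $m\circ k$. Then $(y,s')$ factors through $U_A$, giving $V\Vdash y\in_A s'$. Chasing through the equations $(f\times 1)(y,s')=ie k= ihp = (a,s)p = (ap,sp)$ yields $fy = ap$ and $s' = sp$, so $V\Vdash y\in_A sp$ and $V\Vdash ap=_B fy$. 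The existential clause of \cref{theorem:kripkejoyalsemantics} then delivers the conclusion.

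The main technical obstacle is the forward direction: the forcing semantics of $\exists y$ demands that the witness be produced only after passing to an epimorphic cover, and one has to resist the temptation to work directly over $W$. The pullback of $h$ along the image epi $e$ is exactly the standard device for producing that cover, and the rest of the argument is essentially bookkeeping with the image factorization. No further coherence is required beyond the identity $(f\times 1)m = ie$, together with the universal property of the subobject classifier applied to both $U_A$ and $I$.
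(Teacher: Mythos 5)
Your proposal is correct and follows essentially the same route as the paper's proof: unpack $\exists_f$ via the image factorization of $(f\times 1)m$, pull the image epimorphism back along the factorization $W\to I$ to manufacture the cover $p\colon V\twoheadrightarrow W$, and read off the two forcing statements before applying the existential clause of Kripke--Joyal semantics. Your backward direction, argued via the epi--mono diagonal fill-in, is a clean way of making explicit the converse that the paper merely asserts is obtained by reversing the steps.
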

\begin{proof}
    For the forward direction, assume that $W\Vdash a\in_B\exists_f(s)$. From the direct image construction in \cref{subsection:toposesandinternallogic}, this means equivalently that $W\Vdash \chi\langle a,s\rangle$ where $\chi$ is the classifying morphism of the image of the composite $U_X\to A\times PA\to B\times PA$ in the construction. Thus, $\langle a,s\rangle$ factors through that image via a morphism $w$ as in the diagram:
        \[\begin{tikzcd}
            V & {U_A} \\
            W & I & 1 \\
            & {B\times PA} & \Omega
            \arrow["{!}", from=2-2, to=2-3]
            \arrow["m"', tail, from=2-2, to=3-2]
            \arrow["\chi"', from=3-2, to=3-3]
            \arrow["q", dashed, from=1-1, to=1-2]
            \arrow["e"', two heads, from=1-2, to=2-2]
            \arrow["p"', dashed, two heads, from=1-1, to=2-1]
            \arrow["w", from=2-1, to=2-2]
            \arrow["{\langle a,s\rangle}"'{pos=0.2}, curve={height=12pt}, from=2-1, to=3-2]
            \arrow["\top", from=2-3, to=3-3]
        \end{tikzcd}.\]
    This diagram also depicts $e$ from the construction of $\exists_f$ and its 
    pullback $p$ along $w$ given by the dashed arrows. Note that $p$ is an 
    epimorphism since $e$ is one. Now, since by construction $me$ factors as 
    $me = (f\times 1)\langle b,a\rangle$, we have two equations, namely, 
        \begin{equation*}
            ap = fbq \text{ and } sp=aq.
        \end{equation*}
    The first of these implies that $V\Vdash ap=_Bfbq$ holds as a statement of 
    internal equality. The second, along with the computation
        \begin{equation*}
            \chi\langle a,s\rangle p = \chi meq = (bq\in_A aq) = (bq\in_A sp)
        \end{equation*}
    implies that $V\Vdash bq \in_A sp$ too. Putting the two forcing statements 
    together as a conjunction and then existentially quantifying, we get that 
    the desired forcing statement $W\Vdash \exists y ((a =_B fy)\wedge (y\in_A s))$ 
    by  \cref{theorem:kripkejoyalsemantics} since $p$ is an epimorphism. We leave 
    the converse to the reader's ingenuity since all the steps in the forward 
    direction are essentially reversible.
\end{proof}

Now, having characterized when an element is in a direct image, an examination 
of the semantics of the internal join makes good on the promise of 
\cref{remark:commentsonsemanticsofjoinoperator}. As a reminder, we would like to 
see that membership in a join of a family of subsets is witnessed by membership 
in a particular element of that family. The next result shows that this is 
indeed the case. Note that the proof bears some formal similarity to the proof 
of the previous result. Throughout we use the notation of 
\cref{construction:internaljoin}.

\begin{prop}[Forcing Semantics of Join] 
    \label{proposition:forcingsemanticsofinternaljoinoperator}
    Let $X$ be any object. The join operator $\bigvee\colon P^2X\to PX$ then satisfies
        \begin{equation*}
            W\Vdash x\in_X\bigvee s \text{ if, and only if, } W\Vdash \exists y(x\in_Xy \wedge y\in_{PX} s)
        \end{equation*}
    for any element $\langle x,s\rangle\colon W\to X\times P^2X$. In other words, $x$ is in the join if, and only if, there is a member $y$ of the collection of which $x$ is a member.
\end{prop}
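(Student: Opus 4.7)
The plan is to proceed in the style of the direct image argument of \cref{lemma:forcingsemanticsofdirectimage}, exploiting the observation in \cref{remark:commentsonsemanticsofjoinoperator} that the construction of $\bigvee$ reduces membership in the join to a question about the classifying arrow $\phi$ of an image. First, I would invoke the preliminary equivalence \cref{equation:preliminarycharacterizationofmembershipininternaljoin} to rewrite $W\Vdash x\in_X\bigvee s$ as $W\Vdash \top_W\in_\Omega \hat\phi(x,s)$, and then use the defining property of the transpose $\hat\phi$ to identify the right-hand side with $W\Vdash \phi(\top_W,x,s)$. The question thus reduces to analyzing when $\langle\top_W,x,s\rangle\colon W\to \Omega\times X\times P^2X$ factors through the subobject classified by $\phi$.

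Next, since $\phi$ classifies the image $I$ of $\langle 1_X\in_X c,1_X\times d\rangle\colon X\times U_{PX}\to \Omega\times X\times P^2X$ from \cref{construction:internaljoin}, the forcing statement $W\Vdash \phi(\top_W,x,s)$ is, exactly as in the proof of \cref{lemma:forcingsemanticsofdirectimage}, equivalent to the existence of an epimorphism $p\colon V\twoheadrightarrow W$ and a lift $q\colon V\to X\times U_{PX}$ such that $\langle 1_X\in_X c,1_X\times d\rangle\circ e\circ q=\langle\top_W,x,s\rangle\circ p$, where $e$ is the epi part of the image factorization. Pulling back $e$ along the monic representative of $I$ through which $\langle\top_W,x,s\rangle\circ p$ passes gives such data after possibly refining the cover.

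Then I would unpack what such a lift amounts to. Since $U_{PX}$ is the subobject classified by $\in_{PX}$, a map $V\to U_{PX}$ is equivalent to a pair $\langle y,s'\rangle\colon V\to PX\times P^2X$ satisfying $V\Vdash y\in_{PX}s'$. Matching the three components of the composite against $\langle\top_W,x,s\rangle p$ yields, in the $P^2X$-coordinate, $s'=sp$, so $V\Vdash y\in_{PX}sp$; in the $X$-coordinate, agreement with $xp$; and in the $\Omega$-coordinate, $(xp\in_X y)=\top_V$, i.e., $V\Vdash xp\in_X y$. Combining these as a conjunction along $p$ and invoking the existential clause of \cref{theorem:kripkejoyalsemantics} delivers $W\Vdash \exists y(x\in_X y\wedge y\in_{PX}s)$. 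For the converse one simply reverses each step: given a cover $p\colon V\twoheadrightarrow W$ and a witness $y\colon V\to PX$ forcing both conjuncts, the pair $\langle y,sp\rangle$ lifts to a morphism $V\to U_{PX}$ which, paired with $xp$, yields the required factorization through the image and hence $W\Vdash \phi(\top_W,x,s)$.

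The main obstacle is the bookkeeping of the three simultaneous projections $\Omega$, $X$, and $P^2X$ in the codomain of $\phi$, together with the fact that one must twice pass through an image factorization (once in the definition of $\bigvee_\Omega$ via the transpose of $\phi$, and once in identifying the composite $\bigvee_\Omega\circ\hat\phi$ with membership in the join). Nothing is deep, but the interlocking of the image classifier, the membership subobject $U_{PX}$, and the transpose adjunction requires care; once lined up, the argument mirrors the direct image case almost verbatim.
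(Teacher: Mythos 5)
Your proposal is correct and follows essentially the same route as the paper's proof: invoke the preliminary equivalence \cref{equation:preliminarycharacterizationofmembershipininternaljoin} and the transpose property to reduce to $W\Vdash\phi\langle\top_W,\langle x,s\rangle\rangle$, pull back the epi from the image factorization of $\langle 1_X\in_X c,1\times d\rangle$ along the induced factorization through $I$, match components (with $y$ playing the role of $cq_2$ and $sp=dq_2$), and conclude via the conjunction and existential clauses of \cref{theorem:kripkejoyalsemantics}; the converse by reversing the steps is likewise how the paper handles it.
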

\begin{proof}
    We shall prove the forward direction. Assume that $W\Vdash x\in_{PX}\bigvee 
    s$ holds. Equivalently, then, $W\Vdash \top_W\in_{\Omega} \hat\phi(x,s)$ and 
    indeed $W\Vdash \phi\langle \top_W,\langle x,s\rangle\rangle$ both hold by 
    the equivalence above in 
    \cref{equation:preliminarycharacterizationofmembershipininternaljoin} and 
    then the construction of $\hat\phi$. The last forcing statement implies 
    that $\langle \top_W,\langle x,s\rangle\rangle$ factors through $I$ via 
    some arrow, say, $w\colon W\to I$. Consider now the diagram
        \[\begin{tikzcd}
            V & {X\times U_{PX}} \\
            W & I & 1 \\
            & {\Omega\times X\times P^2X} & \Omega
            \arrow["{!}", from=2-2, to=2-3]
            \arrow["m", tail, from=2-2, to=3-2]
            \arrow["\phi"', from=3-2, to=3-3]
            \arrow["{\langle \top_W,\langle x,s\rangle\rangle}"'{pos=0.2}, curve={height=12pt}, from=2-1, to=3-2]
            \arrow["w", from=2-1, to=2-2]
            \arrow["\top", from=2-3, to=3-3]
            \arrow["e"', two heads, from=1-2, to=2-2]
            \arrow["q", dashed, from=1-1, to=1-2]
            \arrow["p"', dashed, two heads, from=1-1, to=2-1]
        \end{tikzcd}\]
    whose dashed arrows realize $V$ as the corner object of the pullback of $e$ 
    along $w$. Note that $p$ is epi. Since by construction $me$ factors $\langle 1\in c,1\times d\rangle$, we have 
        \begin{equation*}
            \langle \top_V,\langle xp, sp\rangle\rangle = \langle (1\in c)q,(1\times d)q\rangle
        \end{equation*}
    hence writing $q = \langle q_1,q_2\rangle$ and comparing arguments in the 
    above equation, we get three further equations
        \begin{equation*}
            \top_V = q_1\in_Xcq_1 \text{ and } xp= q_1 \text{ and } sp = dq_2.    
        \end{equation*}
    Consequently, putting together the first two, we have
        \begin{equation*}
            V\Vdash xp\in_Xcq_2
        \end{equation*}
    and from the commutative diagram given by the third equation 
        \[\begin{tikzcd}
            V & {U_{PX}} & 1 \\
            {U_{PX}\times W} & {PX\times P^2X} & \Omega
            \arrow["{q_2}", from=1-1, to=1-2]
            \arrow["{!}", from=1-2, to=1-3]
            \arrow["\top", from=1-3, to=2-3]
            \arrow["{\langle c,d\rangle}", from=1-2, to=2-2]
            \arrow["{\langle q_2,p\rangle}"', from=1-1, to=2-1]
            \arrow["{\langle b,s\rangle}"', from=2-1, to=2-2]
            \arrow[from=2-2, to=2-3]
        \end{tikzcd}\]
    we have finally
        \begin{equation*}
            V\Vdash cq_2\in_{PX} sp.
        \end{equation*}
    By the forcing semantics for $\wedge$ summarized in 
    \cref{theorem:kripkejoyalsemantics}, these last two forcing statements are 
    equivalent to
        \begin{equation*}
            V\Vdash (xp\in_Xcq_2 \wedge cq_2\in_{PX} sp)
        \end{equation*}
    hence to 
        \begin{equation*}
            W\Vdash \exists y(x\in_Xy \wedge y\in_{PX} s)
        \end{equation*}
    by the forcing semantics for $\exists$ in  \cref{theorem:kripkejoyalsemantics} 
    since $p$ is an epimorphism. This completes the proof of the forward 
    implication. The conscientious reader can reverse the implications to complete the proof of the converse.
\end{proof}

\begin{cor}[Forcing Semantics of Closure] 
    \label{corollary:closureforcingsemantics}
    If $(X,r)$ is any relation, then
        \begin{equation*}
            W\Vdash a\in_X\closure (s) \text{ if, and only if, } W\Vdash \exists y ((a\leq y)\wedge (y\in_Xs))
        \end{equation*}
    for any element $\langle a,s\rangle\colon W\to X\times PX$.
\end{cor}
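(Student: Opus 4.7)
The plan is to chain together the two forcing-semantics results just established, namely \cref{proposition:forcingsemanticsofinternaljoinoperator} for the join and \cref{lemma:forcingsemanticsofdirectimage} for the direct image, and then contract the resulting two existentials into one by substitution, using the characterizing property of the downsegment arrow.

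First I would unfold the definition $\closure = \bigvee \exists_\downarrow$ so that the forcing statement $W \Vdash a \in_X \closure(s)$ becomes $W \Vdash a \in_X \bigvee \exists_\downarrow(s)$. Applying \cref{proposition:forcingsemanticsofinternaljoinoperator} to the element $\langle a, \exists_\downarrow s \rangle \colon W \to X \times P^2 X$, this is equivalent to
\begin{equation*}
    W \Vdash \exists y \bigl( (a \in_X y) \wedge (y \in_{PX} \exists_\downarrow s) \bigr).
\end{equation*}
Next, by \cref{lemma:forcingsemanticsofdirectimage} applied to $\downarrow\colon X \to PX$, the inner conjunct $y \in_{PX} \exists_\downarrow s$ is forced exactly when $\exists z ((y =_{PX} \downarrow z) \wedge (z \in_X s))$ is. Substituting and using the standard forcing clauses for $\wedge$ and $\exists$ from \cref{theorem:kripkejoyalsemantics}, the statement above is equivalent to
\begin{equation*}
    W \Vdash \exists y \exists z \bigl( (a \in_X y) \wedge (y =_{PX} \downarrow z) \wedge (z \in_X s) \bigr).
\end{equation*}

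The remaining step is to collapse the pair of existentials into a single one. Internal equality $y =_{PX} \downarrow z$ permits replacement of $y$ by $\downarrow z$ in the first conjunct, so the statement reduces to $W \Vdash \exists z ((a \in_X \downarrow z) \wedge (z \in_X s))$. To finish, I would invoke the defining property of the downsegment arrow from \cref{define:downsegmentupsegmentarrows}: $\downarrow$ is the transpose of the classifying morphism $\leq_r$, so by the transpose characterization we have the identity $(a \in_X \downarrow z) = (a \leq_r z)$ on generalized elements. Substituting this into the previous forcing statement yields exactly $W \Vdash \exists y ((a \leq y) \wedge (y \in_X s))$ after renaming $z$ to $y$.

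The main obstacle I anticipate is purely bookkeeping: making sure the substitution $y \mapsto \downarrow z$ is handled carefully at the level of generalized elements and forcing, rather than as an informal set-theoretic move. This is routine given that internal equality in a topos is interpreted by the diagonal and that $W \Vdash \phi(y)$ with $W \Vdash y =_{PX} \downarrow z$ implies $W \Vdash \phi(\downarrow z)$ by the Leibniz property of internal equality. As with the preceding two propositions, both directions of each biconditional used are reversible, so the converse implication follows by running the same chain backwards; no new idea is needed.
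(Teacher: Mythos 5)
Your proposal is correct and follows essentially the same route as the paper's proof: unfold $\closure=\bigvee\exists_\downarrow$, apply \cref{proposition:forcingsemanticsofinternaljoinoperator} and then \cref{lemma:forcingsemanticsofdirectimage}, and use the transpose characterization of $\downarrow$ (membership in a principal downset is the order relation) to collapse to a single existential. The only difference is presentational: the paper tracks the stage-changing epimorphisms $p$ and $q$ explicitly at the level of generalized elements, whereas you perform the same contraction internally via Leibniz substitution for internal equality, which is a legitimate equivalent bookkeeping choice.
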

\begin{proof}
    The left side of the equivalence is characterized as:
        \begin{align}
            W\Vdash a\in_X\closure (s) &\text{ if, and only if, } W\Vdash \exists y (a\in_Xy \wedge y\in_{PX}\exists_\downarrow (s)) \notag \\
                                       &\text{ if, and only if, } V\Vdash ap\in_X b \text{ and } V\Vdash b\in_{PX}\exists_\downarrow (sp) \notag
        \end{align}
    for an epimorphism $p\colon V\to W$ and element $b\colon V\to PX$ by the 
    previous two propositions. The right conjunct is another direct image and so 
    we have:
        \begin{align}
            V\Vdash b\in_{PX}\exists_\downarrow (sp) &\text{ if, and only if, } V\Vdash \exists y ((b=\downset y)\wedge y\in_X sp)) \notag\\
                                                     &\text{ if, and only if, } U\Vdash bq = \downset c \text{ and } U\Vdash c\in_Xspq \notag
        \end{align}
    for some epimorphism $q\colon U\to V$ and element $c\colon U\to V$. On the 
    other hand, the left conjunct can be characterized as 
        \begin{align}
            V\Vdash ap\in_Xb &\text{ if, and only if, } U\Vdash apq\in_X bq \notag \\
                             &\text{ if, and only if, } U\Vdash apq\in_X\downset c \notag \\
                             &\text{ if, and only if, } U\Vdash apq\leq c \notag
        \end{align}
    by the fact that $q$ is epi, hence right cancellable, and the computation 
    above showing that $bq$ is the principal downset on $c$. The two resulting 
    forcing statments $U\Vdash c\in_Xspq$ and $U\Vdash apq\leq c$ combine to 
    yield $W\Vdash \exists y(a\leq y\wedge y\in s)$ by 
    \cref{theorem:kripkejoyalsemantics}, yielding the right side of the desired 
    equivalence. This proves the forward direction. The reverse direction is a 
    similar argument, unwinding the definitions and using the two propositions 
    and Kripke-Joyal semantics.
\end{proof}

\begin{remark} \label{remark:informal-semantics-interior-closure}
    The equivalences characterizing membership in interiors and closures in
    \cref{proposition:interiorsemantics} and 
    \cref{corollary:closureforcingsemantics} are \emph{formal} in the sense 
    that they each incorporate internal logical operators and quantification. 
    However, in the subsequent calculations and applications, we would prefer 
    to argue in a more natural or intuitive way, ignoring where possible the 
    somewhat demanding formalism of these characterizations. Kripke-Joyal 
    semantics makes this possible. For example, owing to existential and 
    conjunction clauses of \cref{theorem:kripkejoyalsemantics}, we have that 
        \begin{equation*}
            W\Vdash a\in_X\closure (s) \text{ if, and only if, } 
                V\Vdash (ap\leq b)\text{ and } V\Vdash (b\in_Xsp))
        \end{equation*}
    for some epimorphism $p\colon V\to W$ and element $b\colon V\to X$. We allow
    ourselves to write 
        \begin{equation*}
            W\Vdash a\in_X\closure (s) \text{ if, and only if, } 
                W\Vdash (a\leq b)\text{ and } W\Vdash (b\in_Xs) \text{ for some $b$}
        \end{equation*}
    with the understanding that technically we should insert the epimorphism 
    $p$ and that it can be done in principle, but will be ignored unless it 
    matters in the computation at hand. Likewise, we have 
        \begin{equation*}
            W\Vdash a\in_X \interior (s) \text{ if, and only if, } 
            W\Vdash (a\leq b) \text{ implies } W\Vdash (b\in_X s) 
            \text{ for all $b$}
        \end{equation*}
    for interior semantics. Similarly, we will argue informally in the 
    case of internal inequalities and element-wise subset containments as 
    stated formally in \cref{corollary:elementwisesubsetcontainment}.
    \qed
\end{remark}

\subsection{Validating the (IS4) Axioms}
\label{subsection:validatingcomonadaxioms}

This section has two goals. The first is to show that if $(X,r)$ is an internal 
order, then $PX$ with the operators $\interior$ and $\closure$ is a categorical 
model of (IS4) in the sense of \cref{def:categorical-model-IS4}. Second is to 
show that if $(X,r)$ is an equivalence relation, then the model is an MAO-couple
as in \cref{def:MAO-couple}.
Wherever possible we will reason informally about interior and closure as in 
\cref{remark:informal-semantics-interior-closure}.

Throughout let $(X,r)$ denote a relation $r$ on $X$ in a topos $\mscr E$. The 
morphism $\interior:= P(\uparrow)\downarrow\colon PX\to PX$ from 
\cref{definition:internalinterioroperator} will yield our semantics of the 
modal $\Box$-operator. Likewise $\closure:=\bigvee\exists_\downarrow\colon 
PX\to PX$ of \cref{definition:internalclosureoperator} will be our semantics of 
the $\Diamond$-operator.

\begin{lemma}
    The operators $\interior,\closure\colon PX\rightrightarrows PX$ are 
    order-preserving, hence internal functors.
\end{lemma}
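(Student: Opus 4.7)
The plan is to prove order-preservation at the level of generalized elements by combining the forcing characterizations of interior and closure (\cref{proposition:interiorsemantics} and \cref{corollary:closureforcingsemantics}) with the element-wise criterion for internal subset containment (\cref{corollary:elementwisesubsetcontainment}). Once monotonicity is established, the ``hence internal functors'' clause is automatic, since an internal functor between internal preorders is, by definition, nothing more than an order-preserving morphism on the underlying carrier objects. Throughout I would reason informally in the sense of \cref{remark:informal-semantics-interior-closure}, suppressing the epimorphic covers that formally accompany existential statements.

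For $\interior$, I would fix $s, s' \colon W \rightrightarrows PX$ with $W \Vdash s \leq s'$, and by \cref{corollary:elementwisesubsetcontainment} read this as the statement that membership in $s$ entails membership in $s'$ at every stage. Suppose then that $W \Vdash a \in_X \interior(s)$ for some $a \colon W \to X$. By \cref{proposition:interiorsemantics}, this means $b \in_X s$ for every $b$ with $a \leq b$; feeding each such $b$ into the hypothesis $s \leq s'$ yields $b \in_X s'$. Applying \cref{proposition:interiorsemantics} a second time produces $W \Vdash a \in_X \interior(s')$, and a further appeal to \cref{corollary:elementwisesubsetcontainment} gives $\interior(s) \leq \interior(s')$.

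For $\closure$, the argument is dual in form. Assuming $W \Vdash a \in_X \closure(s)$, \cref{corollary:closureforcingsemantics} furnishes a witness $b$ with $a \leq b$ and $b \in_X s$. The hypothesis $s \leq s'$ transports this witness to $b \in_X s'$, so $b$ serves equally well as a witness for $a \in_X \closure(s')$ by the same corollary. Hence $\closure(s) \leq \closure(s')$ via \cref{corollary:elementwisesubsetcontainment}.

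There is no real obstacle here; the only delicate point is keeping the epimorphic covers tidy in the closure case, where the witness $b$ strictly speaking lives over some cover $p \colon V \twoheadrightarrow W$ extracted from \cref{theorem:kripkejoyalsemantics}. Since the hypothesis $s \leq s'$ is preserved under pullback along $p$, that cover propagates harmlessly through the argument, which is exactly the book-keeping that \cref{remark:informal-semantics-interior-closure} authorizes us to elide.
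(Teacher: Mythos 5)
Your proof is correct, but it takes a different route from the paper. The paper disposes of the lemma in two lines of structural reasoning: $\interior = P(\uparrow)\circ\downarrow$ is order-preserving because it is a composite of internal functors, and $\closure = \bigvee\exists_\downarrow$ is order-preserving because direct images are monotone and the join $\bigvee$ is order-preserving by \cite[Proposition 5.36]{johnstone2014}. You instead verify monotonicity element-wise via the forcing characterizations (\cref{proposition:interiorsemantics}, \cref{corollary:closureforcingsemantics}, \cref{corollary:elementwisesubsetcontainment}), which is legitimate since those results are established before this lemma and do not depend on it, and since internal order-preservation is equivalent to the generalized-element statement you prove (instantiate at the order subobject of $PX\times PX$ with its two projections). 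Your handling of the closure case is the one place where care is genuinely needed, and you address it correctly: the witness lives over an epimorphic cover $p$, the hypothesis $s\leq s'$ is stable under pullback along $p$, and forcing statements descend along epis, so the argument closes. What the paper's approach buys is brevity and independence from the forcing machinery; what yours buys is self-containedness (no appeal to the external reference for monotonicity of the join) and a concrete illustration of the operator semantics in the style of \cref{remark:informal-semantics-interior-closure}, at the cost of length and of the bookkeeping you flag.
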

\begin{proof} 
    Certainly $\interior$ is one since it is a composite of functors. Likewise,
    by \cite[Proposition 5.36]{johnstone2014}, the join is order-preserving, implying that $\closure$ is a functor as well.
\end{proof}

First we shall show that $\interior$ is a comonad on $PX$. In fact it will be 
cartesian monoidal since $PX$ is preordered. Throughout we shall 
perform element-wise checks of containments using the result of \cref{corollary:elementwisesubsetcontainment}. In particular, to show the counit 
condition, namely, that $\interior \leq 1_{PX}$ holds, it suffices to prove that  
    \begin{equation*}
        W\Vdash a\in_X\interior(s) \text{ implies } W\Vdash a\in_Xs
    \end{equation*}
holds for any $a\colon W\to X$. That this indeed is the case is the content of 
the proof of the first result.

\begin{lemma} \label{lemma:reflexivityproof}
    If the relation $(X,r)$ is reflexive, then $\interior \colon PX\to PX$ 
    satisfies the counit law $\interior \leq 1_{PX}$.
\end{lemma}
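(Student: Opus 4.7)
The plan is to reduce the global counit inequality to a pointwise forcing implication using \cref{corollary:elementwisesubsetcontainment}, then unpack the interior via its forcing semantics (\cref{proposition:interiorsemantics}), and finally exploit reflexivity to instantiate the resulting universal at the diagonal. Concretely, it suffices to prove that for every generalized element $\langle a,s\rangle\colon W\to X\times PX$,
\begin{equation*}
    W\Vdash a\in_X\interior(s) \text{ implies } W\Vdash a\in_X s.
\end{equation*}

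First I would apply \cref{proposition:interiorsemantics} to rewrite the hypothesis as $W\Vdash \forall y((a\leq y)\Rightarrow_\Omega (y\in_X s))$. By the universal clause of \cref{theorem:kripkejoyalsemantics}, this forcing statement says that for every morphism $p\colon V\to W$ and every $b\colon V\to X$, if $V\Vdash ap\leq b$ then $V\Vdash b\in_X sp$. In the informal style sanctioned by \cref{remark:informal-semantics-interior-closure}, we are entitled to test this at $p=1_W$ and $b=a$, so the task reduces to establishing the single forcing statement $W\Vdash a\leq a$.

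This is precisely where reflexivity is used. By \cref{definition:orders}, reflexivity of $r$ means $\Delta_X\leq r$, which externally says that the diagonal $\Delta_X\colon X\to X\times X$ factors through $r\colon R\rightarrowtail X\times X$. Pulling this factorization back along $\langle a,a\rangle\colon W\to X\times X$ (which equals $\Delta_X\circ a$) produces a factorization of $\langle a,a\rangle$ through $r$, which is exactly the assertion $W\Vdash a\leq_r a$. Feeding this into the instantiated implication above yields $W\Vdash a\in_X s$, as required.

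The only mild subtlety is making the instantiation of the universal quantifier rigorous: formally one would take the epimorphism $p\colon V\twoheadrightarrow W$ to be the identity and $b$ to be $a$ itself, rather than invoking the informal shorthand. No genuine obstacle arises here, since the universal clause of \cref{theorem:kripkejoyalsemantics} quantifies over \emph{all} morphisms into $W$ and all elements of $X$, and the reflexivity factorization is stable under pullback along any such morphism. Thus the proof is essentially a two-line unfolding of the forcing semantics, with reflexivity supplying the witness that collapses the universal to the trivial case $y=a$.
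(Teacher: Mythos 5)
Your proposal is correct and follows essentially the same route as the paper: unpack $W\Vdash a\in_X\interior(s)$ via the interior forcing semantics into the implication ``$W\Vdash a\leq_r b$ implies $W\Vdash b\in_X s$ for all $b$,'' then use reflexivity to obtain $W\Vdash a\leq_r a$ and instantiate at $b=a$. You merely spell out the Kripke-Joyal bookkeeping (and note, correctly, that the universal clause needs no epimorphism, so taking $p=1_W$ is harmless) that the paper's proof leaves implicit.
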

\begin{proof}
    Suppose then that $W\Vdash a\in_X \interior (s)$ holds; we shall prove that 
    $W\Vdash a\in_Xs$ holds. But the hypothesis unpacks as the valid implication
        \begin{equation*}
            W\Vdash a\leq_r b \text{ implies } W\Vdash b\in_X s
        \end{equation*}
    for any $b\colon W\to X$. But by reflexivity, $W\Vdash a\leq_r a$ always 
    holds. This implies by the statement immediately above that 
    $W\Vdash a\in_X s$ as required.
\end{proof}

Now, to show the comultiplication law, we want to show that the inequality 
$\interior\leq \interior \, \interior$ holds. This amounts to showing that 
    \begin{equation} \label{equation:statementtobeprovedtransitivityproof}
        W\Vdash a\in_X \interior(s) \text{  implies  } W\Vdash a\in_X \interior \,\interior(s)
    \end{equation}
for any generalized element $\langle a,s\rangle\colon W\to X\times PX$. We can 
make a couple of reductions before embarking upon the proof. The consequent of 
the desired statement 
\cref{equation:statementtobeprovedtransitivityproof} is equivalent to
    \begin{equation} \label{equation:statementtobeprovedtransitivityproofredux}
        W\Vdash a\leq b \text{  implies  } W\Vdash b\in_X \interior(s)
    \end{equation}
for any element $b\colon W\to X$. Thus, we will assume the antecedents of 
\cref{equation:statementtobeprovedtransitivityproof} and 
\cref{equation:statementtobeprovedtransitivityproofredux} and show the above 
consequent, namely, $W\Vdash b\in_X \interior(s)$.

\begin{prop} \label{proposition:AnOrderedObjectModelsComonadAxioms}
    If the relation $(X,r)$ is an order, then $\interior$ satisfies the 
    comultiplication law $\interior\leq \interior \,\interior$. In particular 
    $\interior$ is a cartesian monoidal comonad on $PX$.
\end{prop}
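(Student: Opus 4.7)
The plan is to use the element-wise characterization from \cref{corollary:elementwisesubsetcontainment} to reduce the internal inequality $\interior \leq \interior\,\interior$ to showing the implication
\begin{equation*}
    W \Vdash a \in_X \interior(s) \quad \text{implies} \quad W \Vdash a \in_X \interior\,\interior(s)
\end{equation*}
for an arbitrary element $\langle a, s \rangle \colon W \to X \times PX$. The forcing semantics of the interior operator from \cref{proposition:interiorsemantics}, together with the informal stage-level reasoning justified in \cref{remark:informal-semantics-interior-closure}, lets us unpack each side into purely relational statements about $r$, at which point the argument should reduce cleanly to the transitivity axiom $r \otimes r \leq r$ from \cref{definition:orders}.

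Concretely, I would apply \cref{proposition:interiorsemantics} twice to the consequent: $W \Vdash a \in_X \interior\,\interior(s)$ is equivalent to saying that for every $b$ with $a \leq_r b$ and every $c$ with $b \leq_r c$ we have $c \in_X s$. Assuming the antecedent $W \Vdash a \in_X \interior(s)$, the same semantics says that $a \leq_r c$ already implies $c \in_X s$. Given any chain $a \leq_r b \leq_r c$, transitivity of $r$ gives $a \leq_r c$, so the antecedent fires and yields $c \in_X s$. This establishes the comultiplication law. Combined with \cref{lemma:reflexivityproof} and the previous lemma showing $\interior$ is order-preserving, this makes $\interior$ an internal comonad on $PX$.

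For the \emph{cartesian monoidal} part of the claim, I would observe that $PX$ is an internal preorder, so all coherence diagrams are automatically equalities; what remains to check is preservation of finite meets up to the internal order. Here the key point is that $\interior = P(\uparrow) \circ \downarrow$ is a composite of right adjoints ($\downarrow$ is right adjoint to $\bigvee$ by \cref{definition:suplattice}, and the inverse image $P(\uparrow)$ is right adjoint to $\exists_\uparrow$ by \cref{example:internalquantifiersasadjoints}), hence itself a right adjoint, and therefore preserves finite meets internally.

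The main obstacle I anticipate is purely bookkeeping: making sure that the implicit epimorphic stage-changes in the informal use of Kripke-Joyal semantics really do compose without leaving dangling obligations, and that the "automatic" cartesian monoidal structure genuinely follows in the preorder setting without invoking coherence data the paper has not spelled out. Neither should pose a substantive difficulty, since the transitivity step is a single application of the axiom and the monoidal coherence is vacuous in a preorder.
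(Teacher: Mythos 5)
Your proof is correct and follows essentially the same route as the paper: the comultiplication law is reduced via the element-wise containment criterion and the interior forcing semantics to a single application of transitivity of $r$, and the cartesian monoidal structure is obtained by noting that both $\downarrow\colon PX\to P^2X$ (right adjoint to $\bigvee$) and $P(\uparrow)$ (right adjoint to $\exists_\uparrow$) are right adjoints, with coherence vacuous since $PX$ is an internal preorder. The only cosmetic difference is that you unfold the nested interior fully before invoking transitivity, whereas the paper unfolds it one layer at a time; the substance is identical.
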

\begin{proof}
    As discussed above, assume that $W\Vdash a\in_X \interior(s)$ and 
    $W\Vdash a\leq b$ both hold for appropriate generalized elements. The first 
    assumption yields the equivalence: 
        \begin{equation*}
            W\Vdash a\in_X \interior(s) \text{ if, and only if, } W\Vdash a\leq c \text{ implies } W\Vdash c \in_X s
        \end{equation*}
    for any element $c\colon W\to X$. The desired statement is formally analogous:
        \begin{equation*}
            W\Vdash b\in_X \interior(s) \text{ if, and only if, } W\Vdash b\leq d \text{ implies } W\Vdash d \in_X s
        \end{equation*}
    for any $d\colon W\to X$. We can prove the right side of the equivalence 
    immediately above. For if $W\Vdash b\leq d$ for such $d$, then since 
    $W\Vdash a\leq b$ by transitivity we conclude that $W\Vdash a\leq d$ too. 
    Therefore, since $a$ is an interior point, $W\Vdash d\in_X s$ holds. For 
    product-preservation, it suffices to show that $\downarrow\colon PX\to P^2X$ 
    is product-preserving since $P(\uparrow)$ is a right adjoint. But 
    $\downarrow\colon PX\to P^2X$ is also a right adjoint since $PX$ is an 
    internal sup-lattice. The coherence axioms for a monoidal comonad are satisfied automatically since $PX$ is an internal preorder. 
\end{proof}

Now, we shall show that closure is a monad. First we shall check the unit axiom, 
namely, that $1\leq \closure$ holds. By \cref{corollary:elementwisesubsetcontainment}, 
this is equivalent to showing the implication
    \begin{equation*}
        W\Vdash a\in_X s \text{ implies } W\Vdash a\in_X\closure(s)
    \end{equation*}
for any element $\langle a,s\rangle \colon W\to X\times PX$.

\begin{lemma}
    If $(X,r)$ is a reflexive relation on $X$, then $1\leq \closure$ holds.
\end{lemma}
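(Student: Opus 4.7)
The plan is to reduce the statement $1_{PX} \leq \closure$ to an element-wise check using \cref{corollary:elementwisesubsetcontainment}, then discharge that check via the forcing semantics of the closure operator developed in \cref{corollary:closureforcingsemantics}. Concretely, it suffices to verify that for any generalized element $\langle a, s\rangle\colon W \to X \times PX$, the implication
    \begin{equation*}
        W \Vdash a \in_X s \quad \text{implies} \quad W \Vdash a \in_X \closure(s)
    \end{equation*}
holds. By \cref{corollary:closureforcingsemantics} (together with the informal reformulation in \cref{remark:informal-semantics-interior-closure}), the right-hand side is equivalent to the existence of an element $b$ with $W \Vdash a \leq b$ and $W \Vdash b \in_X s$.

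Assuming the antecedent $W \Vdash a \in_X s$, the obvious witness to try is $b = a$ itself. Since $r$ is reflexive, we have $\Delta_X \leq r$ internally, which means $W \Vdash a \leq_r a$ for any element $a\colon W \to X$ whatsoever. Together with the assumption $W \Vdash a \in_X s$, this produces the required conjunction $W \Vdash (a \leq a) \wedge (a \in_X s)$, and existential generalization on this witness then yields $W \Vdash \exists y((a \leq y) \wedge (y \in_X s))$, which is precisely $W \Vdash a \in_X \closure(s)$ by the closure forcing semantics.

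There is no real obstacle here: the argument is formally dual to the proof of \cref{lemma:reflexivityproof} for the counit law of the interior, with the roles of universal and existential quantification swapped. The only thing to be slightly careful about is the implicit epimorphism in the existential clause of \cref{theorem:kripkejoyalsemantics}, but as noted in \cref{remark:informal-semantics-interior-closure} we may take this epimorphism to be $1_W$ since the witness $b = a$ is already defined at stage $W$.
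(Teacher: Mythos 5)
Your proof is correct and follows essentially the same route as the paper: reduce $1\leq\closure$ to the element-wise implication via \cref{corollary:elementwisesubsetcontainment}, then use reflexivity to take $b=a$ as the witness in the closure forcing semantics of \cref{corollary:closureforcingsemantics}. The paper's own proof is just a terser version of this argument.
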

\begin{proof}
    Let $\langle a,s\rangle\colon W\to X\times PX$ denote any element. 
    Suppose that $W\Vdash a\in_X s$. By reflexivity $W\Vdash a\leq a$ 
    certainly holds, which implies that $W\Vdash a\in_X\closure(s)$ by closure 
    semantics.
\end{proof}

Transitivity implies the multiplication law. It suffices to prove the implication 
    \begin{equation*}
        W\Vdash a\in_X \closure\,\closure(s) \text{ implies } 
        W\Vdash a\in_X\closure(s)
    \end{equation*}
for any element $\langle a,s\rangle\colon W\to X\times PX$.

\begin{lemma} \label{lemma:closure-is-a-monad}
    If $(X,r)$ is transitive, then $\closure\,\closure\leq \closure$ holds. 
    Consequently, $\closure$ is a monad on $PX$ if $X$ is an order.
\end{lemma}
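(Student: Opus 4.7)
The plan is to argue element-wise via \cref{corollary:elementwisesubsetcontainment} and then to use the informal closure semantics extracted in \cref{remark:informal-semantics-interior-closure} from \cref{corollary:closureforcingsemantics}. That is, it suffices to show for every $\langle a,s\rangle\colon W\to X\times PX$ that
\begin{equation*}
    W\Vdash a\in_X\closure\,\closure(s) \text{ implies } W\Vdash a\in_X\closure(s).
\end{equation*}

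First I would unpack the hypothesis using closure semantics twice. By \cref{corollary:closureforcingsemantics}, $W\Vdash a\in_X\closure\,\closure(s)$ yields some element $b$ (along an epimorphism into $W$ that I will suppress, as licensed by \cref{remark:informal-semantics-interior-closure}) such that $W\Vdash a\leq b$ and $W\Vdash b\in_X\closure(s)$. Applying closure semantics once more to the latter produces a further element $c$ with $W\Vdash b\leq c$ and $W\Vdash c\in_X s$. The key step is then transitivity of $r$: since $\Delta_X\leq r$ is not needed here but $r\otimes r\leq r$ is, the pair of forcing statements $W\Vdash a\leq b$ and $W\Vdash b\leq c$ combine under the transitivity axiom to give $W\Vdash a\leq c$. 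Together with $W\Vdash c\in_X s$, closure semantics delivers $W\Vdash a\in_X\closure(s)$, as required.

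For the final sentence, if $(X,r)$ is an order then it is both reflexive and transitive, so combining the preceding lemma (unit law $1\leq \closure$) with the multiplication law just established, and recalling that $\closure$ is order-preserving by the earlier lemma, we conclude that $\closure$ is an internal monad on $PX$ in the sense of \cref{definition:comonadmonad}.

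The main (and only real) obstacle is bookkeeping around the epimorphisms produced by the existential clause of \cref{theorem:kripkejoyalsemantics}: strictly, the element $b$ lives over some cover $p\colon V\twoheadrightarrow W$ and the element $c$ over a further cover $q\colon U\twoheadrightarrow V$, so the transitivity step actually takes place at stage $U$ with elements $apq$, $bq$, and $c$. Since $pq$ is again an epimorphism, existentially quantifying at the end recovers the desired forcing statement at stage $W$; this is exactly the informal shorthand sanctioned by \cref{remark:informal-semantics-interior-closure}, so I would present the argument in that informal register and only mention the epimorphism pullback in a parenthetical aside.
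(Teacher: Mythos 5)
Your proof is correct and follows essentially the same route as the paper: unpack $W\Vdash a\in_X\closure\,\closure(s)$ twice via the closure semantics of \cref{corollary:closureforcingsemantics}, apply transitivity to the two resulting inequalities, and reassemble; the monad conclusion then combines this with the unit law and order-preservation exactly as the paper does. Your explicit handling of the epimorphism stages is a careful elaboration of what the paper suppresses under \cref{remark:informal-semantics-interior-closure}, not a different argument.
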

\begin{proof}
    Assume the antecedent of the implication in the above display. This yields 
    the equivalence
        \begin{equation*}
            W\Vdash a\in_X \closure\,\closure(s) 
                \text{ if, and only if, } W\Vdash a\leq b \text{ and } 
                    W\Vdash b\in_X\closure(s)    
        \end{equation*}
    and of course the rightmost conjunct is equivalent to 
    the conjunction of $W\Vdash b\leq c$ and $W\Vdash c\in s$. But then by 
    transitivity, $W\Vdash a\leq c$ holds, meaning we can conclude that 
    $W\Vdash (a\in_X\closure (s))$ holds too, proving the implication.
\end{proof}

We now have our first main result.

\begin{theo} \label{theorem:ordermodels(IS4)}
    If $(X,r)$ is an order on $X$, then $PX$ 
    with $\interior,\closure\colon PX \rightrightarrows PX$ is a categorical 
    order model of (IS4) as in \cref{def:categorical-model-IS4}.
\end{theo}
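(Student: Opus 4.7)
The plan is to assemble the conclusion by checking each component of \cref{def:categorical-model-IS4} against machinery already developed. First I would note that $PX$ is an internal Heyting algebra by \cite[Theorem IV.8.1]{maclanemoerdijk1992}, so, read as an internal preorder, it is internally cartesian closed (via Heyting implication) and has coproducts (binary joins and the bottom element). This discharges the ambient structural hypothesis that $\mathbb{C} = PX$ be an internal cartesian closed preorder with coproducts.

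Next I would invoke \cref{proposition:AnOrderedObjectModelsComonadAxioms}, which has already established that $\interior$ is a cartesian monoidal comonad on $PX$ (using reflexivity for the counit and transitivity for the comultiplication), and \cref{lemma:closure-is-a-monad}, which dually supplies that $\closure$ is a monad on $PX$. So three of the four requirements in \cref{def:categorical-model-IS4} are immediate.

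The only remaining task is to verify that $\closure$ is $\interior$-strong. Per the remark following \cref{def:categorical-model-IS4}, strength in the preorder setting reduces to the pointwise inequality
\begin{equation*}
    \interior(a) \wedge \closure(b) \leq \closure(\interior(a) \wedge b)
\end{equation*}
for all $a,b \in PX$. I would verify this via Kripke-Joyal semantics in the informal style of \cref{remark:informal-semantics-interior-closure}. Assuming $W \Vdash w \in_X \interior(a) \wedge \closure(b)$, the closure semantics of \cref{corollary:closureforcingsemantics} produces an element $y$ with $w \leq y$ and $y \in_X b$. Using $w \in_X \interior(a)$ together with transitivity of $r$, any $z$ with $y \leq z$ also satisfies $w \leq z$, so $z \in_X a$; hence $y \in_X \interior(a)$ by the interior semantics of \cref{proposition:interiorsemantics}. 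Combining, $y \in_X \interior(a) \wedge b$ and $w \leq y$, so $w \in_X \closure(\interior(a) \wedge b)$ by closure semantics again.

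The main obstacle, such as it is, is just disentangling strength from its general categorical packaging; the coherence axioms of \cite[Definition 4.2]{biermandepaiva2000} are vacuous in the preorder setting, as the paper has already noted. Once the strength inequality is established, every clause of \cref{def:categorical-model-IS4} is met, and $PX$ with $\interior$ and $\closure$ is a categorical order model of (IS4).
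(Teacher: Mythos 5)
Your proposal is correct and follows essentially the same route as the paper's proof: the Heyting algebra structure of $PX$ supplies the cartesian closed cocomplete base, \cref{proposition:AnOrderedObjectModelsComonadAxioms} and \cref{lemma:closure-is-a-monad} supply the monoidal comonad and monad, and the strength inequality is verified by the same Kripke--Joyal argument, extracting a witness from the closure hypothesis and using transitivity to show it lies in the interior. Nothing essential differs from the paper's treatment.
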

\begin{proof}
    Any power object $PX$ is an internal Heyting algebra, hence in particular 
    internally cartesian closed and cocomplete. By 
    \cref{proposition:AnOrderedObjectModelsComonadAxioms} and 
    \cref{lemma:closure-is-a-monad}, it therefore only remains only to see that 
    $\closure$ is $\interior$-strong. By 
    \cref{corollary:elementwisesubsetcontainment}, it suffices to prove that 
        \begin{equation*}
            W\Vdash (a\in_X(\interior(s)\wedge \closure(u))) \text{ implies } 
                W\Vdash a \in_X\closure (\interior(s)\wedge u)
        \end{equation*}
    for any $a\colon W\to X$ and $s,u\colon W\rightrightarrows PX$. But this 
    follows by transtivity. Supposing the antecendent, we need to produce a $b$ 
    such that 
        \begin{equation*}
            W\Vdash a\leq b \text{ and } W\Vdash b\in_X(\interior(s)\wedge u).
        \end{equation*}
    But such a $b$ is given by the antecedent hypothesis that 
    $W\Vdash a\in_X\closure(u)$. For from this assumption $W\Vdash b\in u$ 
    holds for some $b$. It needs only to be seen that $W\Vdash b\in 
    \interior(s)$. So suppose that $c$ satisfies $W\Vdash b\leq c$. But then by 
    transitivity $W\Vdash a\leq c$, meaning that $W\vdash c\in_Xs$ since 
    $W\Vdash a\in_X \interior(s)$ by assumption.
\end{proof}

Our final goal is to show that, supposing symmetry, interior and closure make 
$PX$ into an MAO-couple (\cref{def:MAO-couple}). By 
\cref{theorem:ordermodels(IS4)},this requires showing only that 
    \begin{enumerate}
        \item $\interior$ and $\closure$ are idempotent;
        \item $\interior$ and $\closure$ form an adjoint modality with 
                $\closure \dashv \interior$.
    \end{enumerate}
Idempotence is immediate as a formal consequence of the internal preorder on 
$PX$. So, we shall focus on adjointness. Internally, this is the statement that 
    \begin{equation} \label{equation:identityforsymmetry}
        (1\leq_{PX} \interior\,\closure) \text{ and } 
        (\closure\,\interior \leq_{PX} 1)
    \end{equation}
are both both valid, that is, equal to $\top_{PX}$. That this is the case can 
be shown purely from the calculus of internal adjoints. First we need a 
preliminary lemma, explaining the connection between symmetry and the direct 
image of the downset operator.

\begin{lemma} \label{lemma:symmetryimpliesdownsetandupsetareequal}
    If $(X,r)$ is a symmetric order, then the upset and downset morphisms are 
    equal: $\uparrow = \downarrow$.
\end{lemma}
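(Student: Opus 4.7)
The plan is to reduce the equality of the two arrows $\uparrow, \downarrow \colon X \to PX$ to the equality of the subobjects $r, r^\dagger \hookrightarrow X \times X$, by using that both arrows are defined as transposes of the classifying maps of these subobjects (cf.\ \cref{define:downsegmentupsegmentarrows}).

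First I would observe that the operation $(-)^\dagger$ on relations is involutive: since $r^\dagger = r \circ \sigma$ (post-composed by the swap isomorphism $\sigma \colon X \times X \to X \times X$, with $\sigma^2 = 1$), we have $(r^\dagger)^\dagger = r$. Moreover, since $\sigma$ is an isomorphism of $X \times X$, pre-composition with $\sigma$ preserves the ordering of subobjects, so $s \leq r$ in $\sub(X \times X)$ if and only if $s^\dagger \leq r^\dagger$.

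Next, I would use symmetry in both directions: by assumption $r^\dagger \leq r$, and applying the dagger to this inequality yields $r = (r^\dagger)^\dagger \leq r^\dagger$. Hence $r = r^\dagger$ as subobjects of $X \times X$. Since the classifying map of a subobject is unique, this means $\leq_r = \leq_{r^\dagger}$ as morphisms $X \times X \to \Omega$.

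Finally, by \cref{define:downsegmentupsegmentarrows}, $\downarrow$ is the transpose of $\leq_r$ and $\uparrow$ is the transpose of $\leq_{r^\dagger}$, so the uniqueness of transposes in the natural isomorphism $\mscr E(X, PX) \cong \mscr E(X \times X, \Omega)$ (see \cref{equation:isomorphismofHeytingalgebras}) immediately gives $\uparrow = \downarrow$. No real obstacle is expected here; the argument is essentially a bookkeeping exercise about how symmetry of $r$ forces $r$ and its opposite to coincide as subobjects, after which transposition does the rest.
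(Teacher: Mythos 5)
Your argument is correct and takes essentially the same route as the paper: both proofs reduce the claim to the equality of the classifying arrows of $r$ and $r^\dagger$ and then conclude by uniqueness of transposes. Your explicit derivation of $r = r^\dagger$ as subobjects (applying the involutive, order-preserving $(-)^\dagger$ to the symmetry inequality $r^\dagger \leq r$) is in fact a slightly more careful justification of the step the paper compresses into the assertion that its commuting square is a pullback.
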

\begin{proof}
    The upper- and lower-set morphisms are constructed as transposes as in
        \[\begin{tikzcd}
            {X\times PX} & \Omega && {X\times PX} & \Omega \\
            {X\times X} &&& {X\times X} & {X\times X}
            \arrow["{\in_X}", from=1-1, to=1-2]
            \arrow["{\in_X}", from=1-4, to=1-5]
            \arrow["1\times\uparrow", from=2-4, to=1-4]
            \arrow["\cong"', from=2-4, to=2-5]
            \arrow["{\leq_r}"', from=2-5, to=1-5]
            \arrow["{1\times \downarrow}", from=2-1, to=1-1]
            \arrow["{\leq_r}"', curve={height=12pt}, from=2-1, to=1-2]
        \end{tikzcd}\]
    where the isomorphism on the right exchanges the factors in the product of $X$ with itself. But symmetry means that there is a morphism $R\to R$ such that 
        \[\begin{tikzcd}
            R & R \\
            {X\times X} & {X\times X}
            \arrow[dashed, from=1-1, to=1-2]
            \arrow["r", tail, from=1-2, to=2-2]
            \arrow["r"', tail, from=1-1, to=2-1]
            \arrow["\cong"', from=2-1, to=2-2]
        \end{tikzcd}\]
    commutes. But the square is a pullback, which implies that the two arrows $X\times X\to \Omega$ in the first display are equal by uniqueness of classifying arrows, meaning that their transposes are also equal.
\end{proof}

\begin{remark}
    The previous lemma justifies the following notation. When $(X,r)$ is an 
    equivalence relation, the classifying arrow of $r\colon R\to X\times X$ will 
    be denoted by $\approx\colon X\times X\to\Omega$. The transpose of $\approx$ 
    will be denoted by $[-]\colon X\to PX$, thought of as taking $x\in X$ to its 
    equivalence class $[x] = \lbrace y\in X\mid x\approx y\rbrace$. \qed
\end{remark}

\begin{theo} \label{theorem:equivalencerelationmodels(IS5)}
    If $(X,r)$ is an internal equivalence relation, then the closure operator 
    $\closure\colon PX\to PX$ and interior operator $\interior\colon PX\to PX$ 
    are internally adjoint: $\closure\dashv\interior$. Consequently, interior 
    and closure make the power object $PX$ into an MAO-couple 
    (\cref{def:MAO-couple}).
\end{theo}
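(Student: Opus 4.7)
My plan is to treat the adjunction $\closure \dashv \interior$ as the main content of the theorem and then derive the MAO-couple structure formally from it combined with \cref{theorem:ordermodels(IS4)}. The adjunction amounts to checking the two inequalities $1_{PX} \leq \interior\closure$ and $\closure\interior \leq 1_{PX}$. By \cref{corollary:elementwisesubsetcontainment} each reduces to an element-wise forcing statement, to which the semantics of interior (\cref{proposition:interiorsemantics}) and closure (\cref{corollary:closureforcingsemantics}) apply directly.

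The symmetry hypothesis will enter through the internal equivalence of forcing statements $W \Vdash a \leq_r b$ and $W \Vdash b \leq_r a$, which holds because $r = r^\dagger$ when $r$ is symmetric (the same observation underlying \cref{lemma:symmetryimpliesdownsetandupsetareequal}). For the counit, I would unpack $W \Vdash a \in_X \closure(\interior(s))$ via closure semantics to obtain a witness $b$ with $W \Vdash a \leq b$ and $W \Vdash b \in_X \interior(s)$; symmetry swaps the inequality to $W \Vdash b \leq a$, and interior semantics applied with $a$ itself as the test element then yields $W \Vdash a \in_X s$. For the unit the argument is essentially dual: assuming $W \Vdash a \in_X s$ and $W \Vdash a \leq b$ for an arbitrary test $b$, symmetry yields $W \Vdash b \leq a$, so $a$ itself furnishes the witness required by closure semantics to conclude $W \Vdash b \in_X \closure(s)$; interior semantics then packages this as $W \Vdash a \in_X \interior(\closure(s))$.

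For the MAO-couple consequences, idempotence of $\interior$ and $\closure$ is essentially automatic in the preorder $PX$: the counit $\interior \leq 1$ together with monotonicity yields $\interior\interior \leq \interior$, which combined with the already-established $\interior \leq \interior\interior$ from \cref{theorem:ordermodels(IS4)} gives equality, and dually for $\closure$ using $1 \leq \closure$. For the remaining axiom $\closure(1 \wedge \interior) = \closure \wedge \interior$, at any $s \colon W \to PX$ both sides collapse: the left-hand side to $\closure(\interior(s))$ via $\interior \leq 1$, the right-hand side to $\interior(s)$ via $\interior \leq 1 \leq \closure$. Their equality reduces to showing $\closure\interior = \interior$, which follows from the established adjunction together with idempotence of $\interior$ by the standard adjoint calculation: the inequality $\closure(\interior(s)) \leq \interior(s)$ is equivalent under $\closure \dashv \interior$ to $\interior(s) \leq \interior(\interior(s))$, trivially true, while the reverse inequality is the monad unit.

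The main obstacle I anticipate is the careful bookkeeping with epimorphisms in the Kripke-Joyal existential clauses, which \cref{remark:informal-semantics-interior-closure} explicitly warns about: the "witness $b$" supplied by closure semantics actually lives over some cover $V \twoheadrightarrow W$, and tracking such refinements through the symmetry step and back into interior semantics needs some attention, even if the remark licenses suppressing it in the final write-up. Beyond this piece of forcing-theoretic hygiene, the remainder of the argument is a formal manipulation of adjoints on a preordered internal Heyting algebra.
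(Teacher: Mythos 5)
Your proof is correct, but the route to the adjunction differs from the paper's. The paper does not argue element-wise at all: it first proves \cref{lemma:symmetryimpliesdownsetandupsetareequal} (symmetry forces $\uparrow\,=\,\downarrow\,=[-]$) and then verifies $\closure\,\interior\leq 1$ purely by the calculus of internal adjoints, transposing twice along $\bigvee\dashv[-]$ and $\exists_{[-]}\dashv P([-])$ until the counit becomes the tautology $\interior\leq P([-])[-]$, which is literally the definition of $\interior$ once up- and downsegments coincide; the unit is handled the same way. You instead run the unit and counit through Kripke--Joyal semantics (\cref{proposition:interiorsemantics}, \cref{corollary:closureforcingsemantics}), with symmetry entering as the swap $W\Vdash a\leq b$ implies $W\Vdash b\leq a$; this is sound, keeps the argument uniform with the (IS4) verifications of \cref{subsection:validatingcomonadaxioms}, and your awareness of the epimorphism bookkeeping (restriction along the cover $V\twoheadrightarrow W$, plus descent of forcing along the epi) is exactly the point \cref{remark:informal-semantics-interior-closure} flags, so nothing is missing there. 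What the paper's route buys is the complete avoidance of forcing hygiene and a structural explanation of where symmetry is used (identification of the two segment arrows inside the composite defining $\interior$); what your route buys is a more explicit treatment of the MAO-couple axioms: the paper dismisses idempotence as immediate and leans on \cref{theorem:ordermodels(IS4)} for the axiom $t(1\wedge d)=t\wedge d$, whereas you derive idempotence from the (co)monad laws and verify the second axiom directly by showing both sides reduce to $\interior$ via $\closure\,\interior=\interior$, a standard consequence of $\closure\dashv\interior$ and the monad unit. Both arguments establish the theorem.
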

\begin{proof}
    By \cref{lemma:symmetryimpliesdownsetandupsetareequal}, we shall write 
    $[-]$ for the downset and upset morphisms since they are equal. The two 
    identities that need to be shown are the unit and counit laws, namely, that 
    both inequalities
        \begin{equation*}
            1\leq\interior\,\closure \text{ and } \closure\,\interior \leq 1
        \end{equation*}
    are valid, that is, equal to $\top_{PX}$. Each verification is straighforward 
    by the internal calculus of adjoints. On the one hand, for the conunit 
    equation, we have that 
        \begin{align}
            \closure\,\interior \leq 1 &\text{ if, and only if, } \bigvee\exists_{[-]}\interior \leq 1 \notag \qquad &\text{(def. $\closure$ in \cref{definition:internalclosureoperator})}\\
                                       &\text{ if, and only if, } \exists_{[-]}\interior\leq [-] \notag \qquad &\text{($\bigvee \dashv [-]$)} \\
                                       &\text{ if, and only if, } \interior \leq P([-])[-] \notag \qquad &\text{($\exists_{[-]}\dashv P([-])$)}
        \end{align}
    Since the last inequality is clearly valid by the definition of $\interior$ 
    in \cref{definition:internalinterioroperator}, this proves the counit 
    inequality. Notice that symmetry is used here in the last step since 
    $\interior$ has both up- and down-set operators in its definition, but these 
    are equal by the lemma. The other required inequality has a similar proof.
\end{proof}

\subsection{The Obviously Undivided Relation}

With the main theorem proved, we can now return to the application of the 
histories presheaf from the end of \cref{subsection:toposmodelsgenerally}. Again let $\mscr W$ denote the poset \emph{our world} and let $H\colon \mscr W^{op}\to\set$ denote the histories presheaf sending a point event $e$ to the set $H_e$ of histories containing it. First we shall define the relation \emph{obviously undivided} at $e$ leading to our model.

\begin{define} \label{def:obviously-undivided}
    Two histories $h_1$ and $h_2$ containing a point event $e$ are 
    \textbf{obviously undivided at} $e$ if both $h_1$ and $h_2$ contain some 
    further point event $e'$ strictly later than $e$, in that $e\leq e'$ and 
    $e\neq e'$, if there are any such point events. Otherwise $h_1$ and $h_2$, 
    provided they each contain $e$, \textbf{apparently diverge} at $e$. Write 
    $h_1\approx_e h_2$ when $h_1$ and $h_2$ are obviously undivided at the 
    common point event $e$.\qed
\end{define}

\begin{example} \label{example:simplesbranchingspacetimeredux}
    Return to the situation of \cref{example:simplesbranchingspacetime} 
    where $\mscr W$ consists of the four point events and three nontrivial arrows 
    as in 
        \[\begin{tikzcd}
            && {e_1} && {e_2} \\
            {\mscr W} & {:=} && {e_0} \\
            &&& {e_{-1}}
            \arrow["\alpha", from=2-4, to=1-3]
            \arrow["\beta"', from=2-4, to=1-5]
            \arrow["\gamma"', from=3-4, to=2-4]
        \end{tikzcd}\]
    This example, however simple, captures some of the fundamental ideas of 
    branching and the significance of \textbf{choice points}, namely, those 
    point events $e$ which are maximal in the intersection of two histories 
    \cite[Fact 25, p.407]{belnap1992}. In this example, $e_0$ is the unique 
    choice point. Since $h_1$ and $h_2$ contain $e_{-1}$ and $e_0$ is properly 
    later than $e_{-1}$, we have that $h_1\approx_{e_{-1}}h_2$, i.e., $h_1$ and 
    $h_2$ are obviously undivided at $e_{-1}$. Roughly, events around any choice 
    point in a \emph{real} branching space-time would intuitively look something 
    like $\mscr W$. So, we think of this example as the smallest model of 
    non-trivial branching and obvious undividedness.
    \qed
\end{example}

Notice that if $e$ is a maximal point event in our world histories $h_1$ and 
$h_2$ containing such $e$ are obviously undivided at $e$ by vacuity. Thus, 
$\approx_e$ is reflexive. It is also trivially symmetric. Two further assumptions 
on our world make the relation $\approx_e$ into a particularly nice one. First 
is the assumption of \textbf{existence of infima}: each nonempty chain that is 
bounded below has an infimum in each history of which it is a subset. Secondly, 
there is the assumption of \textbf{density}, namely, that given $e\leq e'$ there 
is a point event properly between $e$ and $e'$. Both for example are true of 
Minkowski space-time.

\begin{lemma} \label{lemma:obviouslyundividedisanequivalencerelation}
    Assume that our world satisfies the infima and density assumptions summarized above. For each point event $e$ of our world, $\approx_e$ is then an equivalence relation on $H_e$.
\end{lemma}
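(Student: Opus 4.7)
The plan is to verify the three axioms of an equivalence relation on $H_e$. Reflexivity and symmetry are essentially discussed in the paragraph preceding the lemma: if $e$ is maximal in our world, the defining condition of $\approx_e$ is vacuously satisfied for any $h \in H_e$; if $e$ is non-maximal, density applied to a strict successor of $e$ together with downward-closure of histories should yield a strict successor of $e$ inside any $h \in H_e$, supplying the required witness for $h \approx_e h$. Symmetry is immediate since the defining condition is symmetric in its two arguments.

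The heart of the proof is transitivity. Suppose $h_1 \approx_e h_2$ and $h_2 \approx_e h_3$. The vacuous case where $e$ is maximal is handled directly, so assume there are witnesses $e_{12} \in h_1 \cap h_2$ and $e_{23} \in h_2 \cap h_3$, each strictly greater than $e$. The key preliminary observation is that histories are downward-closed (as in the proof of \cref{lemma:historiesformapresheaf}), hence $\downset e_{12} \subset h_1 \cap h_2$ and $\downset e_{23} \subset h_2 \cap h_3$. Consequently, any common lower bound of $e_{12}$ and $e_{23}$ that lies strictly above $e$ will belong to $h_1 \cap h_3$ and serve as the required witness for $h_1 \approx_e h_3$. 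To produce such a common lower bound, the strategy is to exploit the structure of $h_2$: in the comparable case (say $e_{12} \leq e_{23}$), $e_{12}$ itself works since $e_{12} \in h_3$ by downward-closure; in the incomparable case, directedness of $h_2$ yields a common upper bound $f \in h_2$ of $e_{12}$ and $e_{23}$, placing $e, e_{12}, e_{23}, f$ into a single diamond inside $h_2$. Density then produces a descending sequence of candidate common lower bounds within this diamond, and the chain-infima assumption extracts a genuine common lower bound of $e_{12}$ and $e_{23}$ in $h_2$, with a final application of density just above this infimum (if it coincides with $e$) giving a witness strictly between $e$ and both $e_{12}, e_{23}$.

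The main obstacle is precisely the incomparable case: density and chain-infima are abstract poset conditions that do not automatically yield pair meets. The success of the argument will hinge on the interplay of directedness inside $h_2$, downward-closure of $h_1$ and $h_3$, and the combined density and infima hypotheses applied inside the diamond $[e,f]$. This is the step that most directly invokes the specific geometric content of branching space-time beyond pure poset theory, and it will require the most careful bookkeeping.
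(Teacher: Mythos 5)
The decisive issue is transitivity, and there your proposal has a genuine gap. The paper does not argue transitivity itself: after observing that reflexivity and symmetry are evident, it delegates transitivity entirely to Fact 46 of Belnap's paper, which is exactly where the density and chain-infima hypotheses get used. You replace that citation with a direct argument, but the argument is not actually carried out --- you flag the incomparable case as ``the main obstacle,'' and the sketch offered for it (a descending sequence inside the diamond $[e,f]$ plus chain infima) is not a proof. More importantly, the target of your strategy is too strong. You seek a common lower bound of the two witnesses $e_{12}$ and $e_{23}$ lying strictly above $e$; any such point would indeed witness $h_1 \approx_e h_3$, but no such point need exist. In the intended relativistic reading of the causal order, let $e_{12}$ and $e_{23}$ lie on two distinct null rays emanating from $e$: every $g$ with $e \leq g \leq e_{12}$ lies on the first null segment and every $g$ with $e \leq g \leq e_{23}$ on the second, so the only common lower bound of $e_{12}$ and $e_{23}$ at or above $e$ is $e$ itself. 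This configuration is fully compatible with density and the existence of infima of bounded chains (it already occurs inside a single Minkowski history, with any branching located elsewhere), so no amount of bookkeeping inside the diamond can manufacture the point you want. A correct direct proof must produce a point of $h_1 \cap h_3$ strictly above $e$ that is not required to sit below either given witness; that is precisely the content of Belnap's Fact 46, and you would either need to cite it, as the paper does, or reconstruct its argument, which is organized quite differently from your sketch.

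A smaller point: your reflexivity argument also does not quite work as stated. Density gives a point strictly between $e$ and a strict successor $e'$, but nothing places that point in the history $h$. The elementary fix uses maximality of $h$ as a directed set: if $e$ were maximal in $h$, then directedness of $h$ forces every element of $h$ to lie below $e$, so $h = \downset e$; but then $h \cup \lbrace e' \rbrace$ is a strictly larger directed set, contradicting maximality. Hence whenever $e$ has a strict successor in our world, every history containing $e$ contains one, which is what reflexivity of $\approx_e$ requires. Symmetry is immediate, as you say, and agrees with the paper.
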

\begin{proof}
    As remarked above, evidently $\approx_e$ is reflexive and symmetric. It has 
    only to be seen that it is transitive. The proof of Fact 46 in 
    \cite[\S A.2.]{belnap1992} shows this given the assumptions of density and 
    existence of infima.
\end{proof}

The final task in this section is that of showing that the pointwise-defined 
\emph{obviously undivided} relation induces an equivalence relation 
(\cref{definition:orders}) on the histories presheaf of 
\cref{lemma:historiesformapresheaf}. This will follow by a lemma showing that 
equivalence relations in a presheaf topos can be defined pointwise.

\begin{lemma}\label{lemma:pointwisecharacterofequivalencerelations}
    A relation $R\to P\times P$ in a presheaf category $[\mscr C^{op},\set]$ is 
    an equivalence relation if, and only if, $Rc\to Pc\times Pc$ is one in 
    $\set$ for each $c\in\mscr C$.
\end{lemma}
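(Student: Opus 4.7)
The plan is to exploit the fact that all finite limits and colimits in a presheaf category $[\mscr C^{op}, \set]$ are computed pointwise, so each ingredient in the definition of an internal equivalence relation (\cref{definition:orders}) is detected componentwise. Concretely, I would unpack the three defining conditions---that $R \to P \times P$ be a subobject, reflexivity $\Delta_P \leq R$, symmetry $R^\dagger \leq R$, and transitivity $R \otimes R \leq R$---and argue each is equivalent to its object-wise counterpart in $\set$.

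For the first three items, the work is essentially formal. Monomorphisms in $[\mscr C^{op},\set]$ are detected componentwise, so $R \to P \times P$ is a relation iff each $Rc \to Pc \times Pc$ is. Because products, the diagonal $\Delta_P$, and the swap isomorphism on $P \times P$ are all computed pointwise, and because a containment of subobjects of $P \times P$ amounts to a componentwise set-inclusion of the corresponding subsets of $Pc \times Pc$, the inequalities $\Delta_P \leq R$ and $R^\dagger \leq R$ translate directly into the componentwise statements $\Delta_{Pc} \leq R_c$ and $R_c^\dagger \leq R_c$ for every $c \in \mscr C$.

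The main obstacle is the transitivity condition, since the composition $R \otimes R$ was defined via a pullback followed by an epi-mono image factorization. Both operations are pointwise in presheaf categories: pullbacks by the general fact that limits in functor categories are computed componentwise, and image factorizations because the regular-epi/mono factorization system is stable under evaluation---epimorphisms, like monomorphisms, are detected levelwise in $[\mscr C^{op},\set]$. Hence $(R \otimes R)_c \cong R_c \otimes R_c$ for each $c$, and the internal containment $R \otimes R \leq R$ unwinds to the family of set-theoretic containments $R_c \otimes R_c \subseteq R_c$.

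Assembling these equivalences establishes both directions simultaneously: the global equivalence-relation data are present exactly when the pointwise data are, and the naturality required of the morphism $R \to P \times P$ is already built into the assumption that $R$ is a subfunctor of $P \times P$. No further coherence needs to be checked because every condition in \cref{definition:orders} has been reduced to a pointwise inclusion of subsets.
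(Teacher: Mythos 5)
Your argument is correct, but it distributes the work differently from the paper. The paper's proof treats the forward direction as trivial and spends all of its effort on the converse, where the only real content is a naturality check: given that each component of (say) the transitivity datum factors through $Rc$, one must verify that the induced maps $\psi_c$ assemble into a natural transformation, which the paper does by composing with the monomorphism $\langle a,b\rangle_c$ and cancelling. You never perform such a check; instead you absorb it into the standard fact that in a presheaf topos the subobject lattice of $P\times P$ is the lattice of subfunctors ordered by pointwise inclusion, so that containment of subobjects is detected componentwise (the naturality of the comparison maps being automatic because both subfunctors restrict the same transition maps of $P\times P$). That appeal is legitimate, and in exchange you make explicit something the paper glosses over: that the composite $R\otimes R$, defined via a pullback followed by an epi--mono image factorization, is itself computed pointwise, since limits, epimorphisms, monomorphisms, and hence image factorizations in $[\mscr C^{op},\set]$ are all levelwise. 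So the two proofs rest on the same underlying facts; the paper makes the naturality verification explicit and leaves the pointwise description of $\otimes$ implicit, while you do the reverse, citing the pointwise description of $\mathrm{Sub}(P\times P)$ where the paper proves its relevant instance by hand.
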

\begin{proof}
    The forward direction is a triviality. For the converse, suppose that each $Rc\to Pc\times Pc$ is an equivalence relation. The proofs consist merely in showing that any induced morphisms factoring through $Rc$ comprise a natural family. For symmetry this is immediate. The arguments for reflexivity and transitivity are identical in strategy. For example, instantiating transitivity, for each object $c$, the corner object $Tc$ factors through $Rc$ as in 
        \[\begin{tikzcd}
            Tc & Rc \\
            & {Pc\times Pc}
            \arrow["{\psi_c}", dashed, from=1-1, to=1-2]
            \arrow["{\langle ap,bq\rangle_c}"', curve={height=6pt}, from=1-1, to=2-2]
            \arrow["{\langle a,b\rangle_c}", from=1-2, to=2-2]
        \end{tikzcd}\]
    These morphisms $\psi_c$ comprise a natural transformation. For each $\langle a,b\rangle_c$ is monic, i.e. injective in $\set$. So, for a morphism $f\colon c\to d$, we calculate that
        \begin{equation*}
            \langle a,b\rangle_c R(f)\psi_d = (Pf\times Pf)\langle ap,bq\rangle = \langle a,b\rangle_c\psi_c Tf
        \end{equation*}
    which implies the naturality equation $R(f)\psi_d = \psi_cTf$ by cancelling $\langle a,b\rangle_c$. Again the same strategy works to show reflexivity.
\end{proof}

\begin{theo} \label{theorem:historiespresheafmodelsIS5}
    The obviously undivided relation induces an equivalence relation on the 
    histories presheaf. Therefore, the histories presheaf $H$ canonically 
    induces an MAO-couple in the presheaf topos on our world.
\end{theo}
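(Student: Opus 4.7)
The plan is to construct a subobject $R \hookrightarrow H \times H$ in the presheaf topos $[\mscr W^{op}, \set]$ pointwise by setting $R_e := \{(h_1,h_2) \in H_e \times H_e \mid h_1 \approx_e h_2\}$, verify that this defines a genuine subpresheaf, and then invoke \cref{lemma:pointwisecharacterofequivalencerelations} to conclude that $R$ is an internal equivalence relation. Applying \cref{theorem:equivalencerelationmodels(IS5)} then yields the desired MAO-couple $PH$ in the presheaf topos.

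The first task is to verify that the pointwise assignment $e \mapsto R_e$ is functorial, i.e., that the restriction maps $H_{e'} \times H_{e'} \to H_e \times H_e$ (for $e \leq e'$, which amount to inclusions by downward closure of histories) carry $R_{e'}$ into $R_e$. Concretely, if $(h_1, h_2) \in R_{e'}$, one must show $(h_1, h_2) \in R_e$. I would split on cases: if $e$ is maximal in $\mscr W$, then $R_e = H_e \times H_e$ by vacuity, so nothing to check. If $e$ is not maximal and $e < e'$, then $e'$ itself is a point event strictly later than $e$ lying in both $h_1$ and $h_2$, so $h_1 \approx_e h_2$ directly. If $e = e'$ the restriction is the identity. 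Finally, if $e < e'$ and $e'$ is in turn not maximal, then $h_1 \approx_{e'} h_2$ provides some $e'' > e'$ witnessing the relation, and by transitivity of $\leq$ this same $e''$ witnesses $h_1 \approx_e h_2$.

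Having established that $R$ is a subpresheaf of $H \times H$, the remaining work is to show it is internally reflexive, symmetric and transitive. By \cref{lemma:pointwisecharacterofequivalencerelations}, this reduces to a pointwise check, and \cref{lemma:obviouslyundividedisanequivalencerelation} (which uses the density and infima assumptions on $\mscr W$) supplies exactly this: each $\approx_e$ is an equivalence relation on $H_e$. Thus $R \hookrightarrow H \times H$ is an internal equivalence relation in $[\mscr W^{op}, \set]$.

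The conclusion then follows by applying \cref{theorem:equivalencerelationmodels(IS5)} to the internal equivalence relation $(H, R)$: the power object $PH$ in the presheaf topos, equipped with the interior and closure operators coming from $R$ as in \cref{definition:internalinterioroperator} and \cref{definition:internalclosureoperator}, forms an MAO-couple in the sense of \cref{def:MAO-couple}. I expect the main obstacle to be the subpresheaf check — specifically the corner case where $e'$ might be maximal but $e$ is not — but the case analysis above handles it cleanly because the existence of $e'$ itself serves as the required strictly later common point event.
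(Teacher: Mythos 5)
Your proposal is correct and takes essentially the same route as the paper: reduce the equivalence-relation claim to a pointwise check via \cref{lemma:pointwisecharacterofequivalencerelations}, supply the pointwise fact with \cref{lemma:obviouslyundividedisanequivalencerelation}, and conclude the MAO-couple statement from \cref{theorem:equivalencerelationmodels(IS5)}. The only difference is that you explicitly verify that the pointwise-defined relation assembles into a subpresheaf of $H\times H$ --- a naturality step the paper leaves implicit --- and your case analysis handles it correctly, the key observation being that for $e< e'$ the point event $e'$ itself, lying in both histories, witnesses $h_1\approx_e h_2$.
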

\begin{proof}
    It suffices by  \cref{lemma:pointwisecharacterofequivalencerelations} to do 
    a pointwise check. But this is confirmed by 
    \cref{lemma:obviouslyundividedisanequivalencerelation} which showed that 
    each $\approx_e$ is an equivalence relation on the set $H_e$ of histories 
    containing $e$. The model statement follows now just by noting 
    \cref{theorem:equivalencerelationmodels(IS5)}.
\end{proof}

\section{Applications, Interpretations and Computations}
\label{section:branchingspacetime}

This section develops illustrations, elaborations, and examples of the previous 
results. In \cref{subsection:presheafcategorycomputations}, we carry out a 
number of computations in presheaf categories that illustrate the abstractions 
more concretely. These are applied in \cref{subsection:indistinguishability} to 
discuss the so-called \emph{indistinguishability interpretation} of equivalence 
relations providing the natural models for \emph{epistemic logic}. We shall see 
that forcing statements arising from the obviously undivided relation have a 
natural interpretation in this light. Presheaf computations are also applied in 
\cref{subsection:firstorderconsiderations} in the investigation of several 
first-order modal formulas, including the Barcan formulas, existence 
predicates, and then the validity of the actualist thesis 
\cref{equation:actualist-thesis}.

\subsection{Presheaf Category Computations}
\label{subsection:presheafcategorycomputations}

This subsection is devoted to unwinding our general constructions for presheaf 
toposes $\mscr E=[\mscr C^{op},\set]$. The reference for generalities of such 
toposes is \cite{maclanemoerdijk1992}. We adopt the convention that the action 
of the transition function $X(f)\colon XC\to XD$ for an arrow $f\colon D\to C$ 
and element $a\in XC$ will be denoted by $a\cdot f := X(f)(a)$. 

Our goal is to understand the interior operator in this special case. It is 
tempting to jump to the conclusion that the forcing semantics of necessity 
statements $\Box\phi\colon X\to \Omega$ is given by 
    \begin{equation*}
        C\Vdash a\in_X \Box\phi \text{ if, and only if, } C\Vdash a 
        \leq b \text{ implies } C\Vdash \phi_C(b).
    \end{equation*}
This looks plausible given \cref{proposition:interiorsemantics} but needs to be 
proved. This will be done in detail over the course of the subsection. The resulting proposition will be used throughout \cref{subsection:firstorderconsiderations}.

First recall \cite[\S VI.7]{maclanemoerdijk1992} that, since every presheaf is 
a colimit of representables and that transformations $\alpha\colon \mathbf y 
C\to X$ are essentially elements $a\in XC$, the forcing relation takes the form 
    \begin{equation} \label{equation:presheafforcingrelation}
        C\Vdash \phi(a) \text{ if, and only if, } \phi_C(a) = \top_X \text{ if, and only if, } a\in\lbrace x\mid\phi\rbrace(C)
    \end{equation}
where $\lbrace x\mid\phi\rbrace$ denotes the subobject classified by 
$\phi\colon X\to \Omega$. In the following proposition, the morphism 
$\Box\phi\colon 1\to PX$ is the composite of $\Box:=\interior\colon PX\to PX$ 
and the transpose $\hat\phi\colon 1\to PX$ of $\phi\colon X\to \Omega$ in the 
presheaf topos $\mscr E = [\mscr C^{op},\set]$. Likewise a similar pattern 
yields the morphism $\Diamond\phi$. Forcing statements are most distinctive for 
$\Omega$-valued morphisms that are not necessarily closed formulas, so we 
consider the transposes $(1_X\in_C \Box\phi)\colon X\to \Omega$ and $(1_X\in_C 
\Diamond\phi)\colon X\to \Omega$.

\begin{prop} \label{proposition:presheafforcingsemanticsinteriorandclosure}
    For any proposition $\phi\colon X\to \Omega$ where $(X,r)$ is a relation,
        \begin{enumerate}
            \item $C\Vdash (a\in_C \Box\phi)$ if, and only if, for all $b\in XC$ with $a\leq_C b$ it follows that $C\Vdash \phi_C(b)$
            \item $C\Vdash (a\in_C \Diamond\phi)$ if, and only if, there is $b\in XC$ with $a\leq_C b$ with $C\Vdash \phi_C(b)$.
        \end{enumerate}
\end{prop}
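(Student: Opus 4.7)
The plan is to specialize the general forcing semantics of
\cref{proposition:interiorsemantics} and
\cref{corollary:closureforcingsemantics} to a representable stage
$W = \mathbf{y}C$ in the presheaf topos $\mscr E = [\mscr C^{op},\set]$,
using the identification in \cref{equation:presheafforcingrelation} between
forcing at a representable and set-theoretic membership in the classified
subfunctor. Once this reduction is in place, the standard presheaf reading of
the Kripke-Joyal semantics of $\forall$, $\exists$, $\wedge$, and $\Rightarrow$
\cite[\S VI.7]{maclanemoerdijk1992} does essentially all of the remaining
work, and the informal reading of
\cref{remark:informal-semantics-interior-closure} can be invoked throughout.

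For part (2), \cref{corollary:closureforcingsemantics} gives that
$C\Vdash a\in_X\Diamond\phi$ is equivalent to
$C\Vdash \exists y((a\leq y)\wedge \phi(y))$. In a presheaf topos, where the
relevant topology is trivial for Kripke-Joyal purposes, the existential clause
produces a witness at the representable stage itself: some $b\in X(C)$ with
$C\Vdash (a\leq b)\wedge \phi_C(b)$. Unpacking the conjunction via
\cref{theorem:kripkejoyalsemantics}, together with the fact that
$C\Vdash (a\leq b)$ is just the set-theoretic $a\leq_C b$ by stability of the
relation subfunctor $R\hookrightarrow X\times X$, produces the desired
biconditional directly.

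For part (1), \cref{proposition:interiorsemantics} gives that
$C\Vdash a\in_X\Box\phi$ is equivalent to
$C\Vdash \forall y((a\leq y)\Rightarrow \phi(y))$. Unfolding the presheaf
clauses for $\forall$ and then $\Rightarrow$ yields: for every
$f\colon D\to C$ and every $b\in X(D)$, if $a\cdot f\leq_D b$ then
$D\Vdash \phi_D(b)$. The forward direction of the claimed biconditional is
then immediate by specializing to $f = 1_C$ and $D = C$. The reverse direction
is the meat, and the main obstacle is managing the nested quantification over
restrictions $f\colon D\to C$ that is introduced by the presheaf semantics of
$\forall$ and $\Rightarrow$: the hypothesis at stage $C$ must be shown to
propagate along any such $f$. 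This is where stability of the subfunctors
$R\hookrightarrow X\times X$ and $\{x\mid\phi\}\hookrightarrow X$ under
restriction is the crucial ingredient, effectively collapsing the nested
presheaf quantification into the single quantification over elements
``$b\in XC$ with $a\leq_C b$'' appearing on the right-hand side of the
statement (understood in the natural generalized-element sense over the
category of elements above $C$).
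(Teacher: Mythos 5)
Your overall route is the same as the paper's: instantiate \cref{proposition:interiorsemantics} and \cref{corollary:closureforcingsemantics} at a representable stage $\mathbf yC$ and unwind the presheaf Kripke--Joyal clauses (the paper's proof does exactly this and defers the detailed unwinding of the box case to \cref{example:necessityoperatorinpresheaftoposes}). Your part (2) is sound: epimorphisms onto $\mathbf yC$ split because representables are projective, so the existential clause of \cref{theorem:kripkejoyalsemantics} really does yield a witness $b\in XC$ at the stage $C$ itself, and $C\Vdash a\leq b$ reduces to $a\leq_C b$ because $R$ is a subfunctor.

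The genuine gap is the reverse direction of part (1), precisely the step you call ``the meat.'' Unfolding the $\forall$ and $\Rightarrow$ clauses leaves the condition: for every $f\colon D\to C$ and every $c\in XD$ with $a\cdot f\leq_D c$, one has $D\Vdash\phi_D(c)$. You propose to collapse this to the stage-$C$ statement using stability of $R\hookrightarrow X\times X$ and $\{x\mid\phi\}\hookrightarrow X$ under restriction, but stability points the wrong way: it transports related pairs and verified elements from $C$ down to $D$, whereas the stage-$D$ quantifiers range over elements $c\in XD$ and pairs $(a\cdot f,c)\in RD$ that need not be restrictions of anything at stage $C$, so the stage-$C$ hypothesis gives no control over them. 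In fact no argument of this shape can close the gap, because the stage-$C$ condition is not monotone along restriction while forcing is: in \cref{example:simplesbranchingspacetime}, let $\phi$ classify the subfunctor of $H$ with value $\{h_1\}$ at $e_{-1},e_0,e_1$ and $\emptyset$ at $e_2$; at $C=e_0$, $a=h_1$ the stage-$C$ condition holds (the only history $\approx_{e_0}$-related to $h_1$ is $h_1$ itself, $e_0$ being the choice point), yet $e_0\Vdash h_1\in_H\Box\phi$ would force, by restricting along $\gamma\colon e_{-1}\to e_0$ and applying the easy direction at $e_{-1}$, that $\phi_{e_{-1}}(h_2)=\top$ (since $h_1\approx_{e_{-1}}h_2$), which is false. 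So the reverse implication cannot be extracted from stability; the honest unpacked semantics retains the quantification over arrows $f\colon D\to C$, exactly as in \cref{example:necessityoperatorinpresheaftoposes} before the passage to the ``diagonal component,'' and passing to the stage-$C$-only form needs a further argument or hypothesis that you have not supplied. Your closing parenthetical, which reinterprets ``$b\in XC$ with $a\leq_C b$'' as generalized elements over the category of elements above $C$, does not repair this: it quietly replaces the stated stage-$C$ quantification with the full quantification over restrictions, i.e., it changes the statement rather than proving it.
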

\begin{proof}
    These statements result from instantiating 
    \cref{proposition:interiorsemantics} and 
    \cref{corollary:closureforcingsemantics} using the observation above that 
    elements $\mathbf yC\to X$ are in bijection with elements of $XC$ and that 
    the $C$-component of $b\in_X\hat\phi$ is equal to $\phi_C(b)$ by 
    transposition. A detailed examination of the necessity operator amounting 
    to a direct proof of the first clause of the proposition appears in 
    \cref{example:necessityoperatorinpresheaftoposes}.
\end{proof}

The reader comfortable with the reference, or at least the constructions 
therein, can skip most of the following generalities and look at 
\cref{example:classifyingmorphismforordersinpresheaftopos}, 
\cref{example:completedescriptionofupsetanddownsetarrowsinpresheaftoposes} and 
\cref{example:necessityoperatorinpresheaftoposes} which are the main novelties 
of the subsection. These are a complete description of the upset and downset 
arrows $\uparrow,\downarrow\colon X\rightrightarrows PX$ in any presheaf topos, 
as well as a more in-depth analysis of the interior operator and its semantics. 

\begin{construction}[Subobject Classifier]
    Let $\Omega\colon \mscr C^{op}\to\set$ denote the presheaf given by 
        \begin{equation*}
            \Omega(C) = \lbrace S\mid S\text{ is a sieve on } C\rbrace
        \end{equation*}
    on objects. On a given arrow $f\colon C\to D$, the associated function $\Omega(f)\colon \Omega(D)\to \Omega(C)$ makes the assignment 
        \begin{equation*}
            \Omega(f)(T) = f^*T = \lbrace g \mid fg\in T\rbrace
        \end{equation*}
    for any sieve $T$ on $D$. It is straighforward to check that this is a 
    sieve and that $\Omega$ so defined is a presheaf. Given a monomorphism 
    $S\to X$ in the presheaf topos, the associated classifying morphism 
    $\chi\colon X\to \Omega$ has components $\chi_C\colon X(C)\to \Omega(C)$ 
    given by 
        \begin{equation} \label{equation:classifyingmorphisminpresheaftoposes}
            \chi_C(a) = \lbrace f\colon D\to C\mid a\cdot f\in S(D)\rbrace.
        \end{equation}
    That is, $\chi_C(a)$ is the sieve of morphisms $f$ into $C$ whose action 
    under $Xf$ ends up in the appropriate component of the subobject $S$. The 
    top element of $\Omega$ is $\top\colon 1\to \Omega$ picking out the 
    \emph{total sieve} of all arrows on each object $C$. It is now 
    straighforward to verify that for each subobject of $X$, the arrow $\chi$ 
    is the unique morphism into $\Omega$ realizing the subobject as a pullback 
    of $\top$ along $\chi$. \qed
\end{construction}

\begin{example}[Classifying Morphisms for Orders] 
    \label{example:classifyingmorphismforordersinpresheaftopos}
    Let $(X,r)$ denote a relation $r\colon R\to X\times X$ on a presheaf $X$. 
    This is point-wise a relation $r_C\colon RC\to XC\times XC$ in $\set$ for 
    each object $C$. The classifying morphism $\leq_r\colon X\times X\to 
    \Omega$ fits into a pullback diagram 
        \[\begin{tikzcd}
            R & 1 \\
            {X\times X} & \Omega
            \arrow["{!}", from=1-1, to=1-2]
            \arrow["r"', tail, from=1-1, to=2-1]
            \arrow["{\leq_r}"', from=2-1, to=2-2]
            \arrow["\top", tail, from=1-2, to=2-2]
        \end{tikzcd}\]
    and by the description in 
    \cref{equation:classifyingmorphisminpresheaftoposes}, for any object $C$, 
    we have
        \begin{equation} \label{equation:actionofcomponentrelationonelements}
            a\leq_Cb = \lbrace f\colon D\to C \mid (a\cdot f\leq_Db\cdot f)\rbrace.
        \end{equation}
    Note that here we have written $\leq$ for $\leq_r$ and preferred the infix 
    notation since it is easier to parse. So, in other words, the image of 
    $(a,b)$ under the classifying arrow is the sieve of arrows $f\colon D\to C$ 
    under whose action $a$ and $b$ are related under $r$ in the sense that 
    $a\cdot f\leq b\cdot f$ in $X(D)$. We shall be interested in this 
    construction when $(X,r)$ is at least an order, but especially when it is 
    an equivalence relation.\qed
\end{example}

\begin{example}[Kernel of a Morphism]
    Let $\tau\colon X\to Y$ denote any morphism in a topos $\mscr E$. The 
    \textbf{kernel} of $\tau$ is the pullback of $\tau$ along itself 
        \[\begin{tikzcd}
            {X\times_YX} & X \\
            X & Y
            \arrow["\rho", from=1-1, to=1-2]
            \arrow["\lambda"', from=1-1, to=2-1]
            \arrow["\tau"', from=2-1, to=2-2]
            \arrow["\tau", from=1-2, to=2-2]
            \arrow["\lrcorner"{anchor=center, pos=0.125}, draw=none, from=1-1, to=2-2]
        \end{tikzcd}\]
    and the projections always yield an equivalence relation 
    $\langle\lambda,\rho\rangle\colon X\times_YX \to X\times X$. In the case 
    when $\mscr E = [\mscr C^{op},\set]$, since limits are computed pointwise 
    in $\set$, for each $C\in\mscr C$, we have a pullback square 
        \[\begin{tikzcd}
            {XC\times_{YC}XC} & XC \\
            XC & YC
            \arrow["{\rho_C}", from=1-1, to=1-2]
            \arrow["{\lambda_C}"', from=1-1, to=2-1]
            \arrow["{\tau_C}"', from=2-1, to=2-2]
            \arrow["{\tau_C}", from=1-2, to=2-2]
            \arrow["\lrcorner"{anchor=center, pos=0.125}, draw=none, from=1-1, to=2-2]
        \end{tikzcd}\]
    whose corner object consists of those pairs $(a,b)$ satisfying 
    $\tau_Ca = \tau_Cb$. Accordingly, the classifying morphism 
    $\ker(\tau)\colon X\times X\to \Omega$ has components $\ker(\tau)_C$ whose 
    action is 
        \begin{equation*}
            \ker(\tau)_C(a,b) = \lbrace f\colon D\to C\mid \tau_D(a\cdot f) = \tau_D(b\cdot f)\rbrace
        \end{equation*}
    that is, whose action produces the sieve of arrows into $C$ whose action on 
    $a$ and $b$ are equalized by the appropriate component of $\tau$. \qed
\end{example}

\begin{example}[Identity Relation]
    In any topos, the classifier of the diagonal morphism on an object $X$ 
    serves as an identity predicate $=_X\colon X\times X\to\Omega$. 
    When $X$ is a presheaf, each component of the identity predicate 
    $(=_X)_C\colon XC\times XC\to\Omega C$ has the action 
        \begin{equation*}
            a (=_X)_C b = \lbrace f\colon D\to C\mid a\cdot f = b\cdot f\rbrace\qquad a,b\in XC.
        \end{equation*}
    The equation $a\cdot f = b\cdot f$ is set-theoretic equality in $X(D)$ 
    since each component of $\Delta\colon X\to X\times X$ is the actual 
    set-theoretic diagonal. So, colloquially, $a (=_X)_C b$ is the sieve of 
    those arrows into $C$ under which the action on $a$ and $b$ are literally 
    equal in $XD$. \qed
\end{example}

The next task is to look carefully at transposes and in particular at those of 
the classifying morphisms  of orders as above. This is so we can understand the 
upset and downset arrows from \cref{define:downsegmentupsegmentarrows} in this 
context. Generally, exponential objects 
$Q^P$ are given by a homset formula, namely, 
    \begin{equation} \label{equation:power-objectcomponentinpresheaftopos}
        Q^P(C) = [\mscr C^{op},\set](\mathbf yC\times P,Q)
    \end{equation}
that is, as the set of all transformations $\mathbf yC\times P\to Q$. For a 
morphism $f\colon D\to C$, the action of $Q^P(f)$ is to take a transformation 
$\theta\colon \mathbf yC\times P\to Q$ to one 
$Q^P(f)(\theta)\colon \mathbf y D\times P\to Q$ defined by
    \begin{equation} \label{equation:transitionfunctionpower-objectspresheaftopos}
        \mscr C(B,D)\times PB\to QB \qquad (g,b)\mapsto \theta_B(fg,b)
    \end{equation}
where $g\colon B\to D$ any arrow and $b\in PB$ is any element. Since power objects 
can be viewed as special exponentials, namely, $PX = \Omega^X$, we can use this 
construction to do our calculations.\qed

\begin{construction}[Power Objects]
    For any presheaf $X\colon \mscr C^{op}\to \set$, the associated power object is the exponential $PX=\Omega^X$ whose value at any object $C$ is
        \begin{equation*}
            PX(C) = \Omega^X(C) = [\mscr C^{op},\set](\mathbf yC\times X,\Omega).
        \end{equation*}
    That is, $PX(C)$ is the set of transformations $\mathbf yC\times X\to \Omega$. Any such $\tau\colon \mathbf yC\times X\to \Omega$ has components 
        \begin{equation*}
            \tau_D\colon \mscr C(D,C)\times X(D)\to \Omega(D) \qquad\qquad \tau_D(f,a)\in \Omega(D).
        \end{equation*}
    picking out $\tau_D(f,a)$, a sieve on $D$, for each such pair $(f,a)$ consisting of an arrow $f\colon D\to C$ and an element $a\in X(D)$. Given a transformation $\tau\colon X\times Y\to \Omega$, the transpose is the unique transformation $\hat g\colon Y\to PX$ making 
        \[\begin{tikzcd}
            {X\times PX} & \Omega \\
            {X\times Y}
            \arrow["{\in_X}", from=1-1, to=1-2]
            \arrow["{1\times\hat g}", from=2-1, to=1-1]
            \arrow["g"', curve={height=6pt}, from=2-1, to=1-2]
        \end{tikzcd}\]
    commute and for each object $C$ the component $\hat g_C\colon Y(C)\to PX(C)$ has the action of sending $b\in Y(C)$ to a transformation $\mathbf yC\times X\to \Omega$ given by
        \begin{equation*}
            \hat g_C(b)_D\colon \mscr C(D,C)\times X(D) \to \Omega D \qquad \hat g_C(b)_D(f,a) = g(a,Yf(b))
        \end{equation*}
    for any object $D$ of $\mscr C$. In the special case of a transformation 
    $\phi\colon X\to \Omega$, we can compute the transpose $\hat\phi\colon 1\to 
    PX$ from this general formula. The transpose is the unique such morphism 
    making the diagram 
        \[\begin{tikzcd}
            {X\times PX} & \Omega \\
            {X\times 1\cong X}
            \arrow["{\in_X}", from=1-1, to=1-2]
            \arrow["{1\times\hat \phi}", from=2-1, to=1-1]
            \arrow["\phi"', curve={height=12pt}, from=2-1, to=1-2]
        \end{tikzcd}\]
    commute. For any object $C$, the set function $\hat\phi_C\colon 1\to PX(C)$ 
    picks out a distinguished transformation $\hat\phi_C(\ast)\colon\mathbf 
    yC\times X\to \Omega$ whose components are described by
        \begin{equation} \label{equation:actionofphihatinpresheaftopos}
            \hat\phi_C(\ast)_D\colon \mscr C(D,C)\times X(D) \to \Omega(D) \qquad \hat\phi_C(\ast)_D(f,a) = \phi_D(a).  
        \end{equation}
    Note that the formula on the right ignores the arrow $f$ and evaluates the 
    appropriate component of $\phi$ at the element $a\in X(D)$. \qed
\end{construction}

\begin{example}[Upset and Downset Arrows] \label{example:completedescriptionofupsetanddownsetarrowsinpresheaftoposes}
    Continue with the elaboration begun in 
    \cref{example:classifyingmorphismforordersinpresheaftopos}. Let $(X,r)$ 
    denote any relation in $[\mscr C^{op},\set]$. The downset arrow 
    $\downarrow\colon X\to PX$ has components $\downarrow_C\colon XC\to PX(C)$ 
    sending each $b\in XC$ to a transformation $\mathbf yC\times X\Rightarrow 
    \Omega$ whose components over $D\in \mscr C$ are described by the action 
        \begin{equation} \label{equation:downsetactiononcomponents}
            \downset_C(b)_D(f,a) = a \leq_D b\cdot f \qquad (f,a)\in \mscr C(D,C)\times XD.
        \end{equation}
    So, in other words, using the description of 
    \cref{equation:actionofcomponentrelationonelements}, we have that 
    $\downset_C(b)_D(f,a)$ is the sieve of those arrows $g\colon B\to D$ for 
    which $a\cdot g \leq_B b\cdot fg$ holds. Likewise, the upset arrow 
    $\uparrow\colon X\to PX$ has the action
        \begin{equation} \label{equation:upsetactiononcomponents}
            \upset_C(b)_D(f,a) = b\cdot f \leq_D a \qquad (f,a)\in \mscr C(D,C)\times XD.
        \end{equation}
    So, specifically, this action amounts to the sieve of those $g\colon B
    \to D$ for which $b\cdot fg \leq_B a\cdot g$ holds, effectively just 
    reversing that of $\downarrow$. We will later apply the downset arrow 
    construction to the case of $PX$ with its order induced from $\Omega$. In 
    this case $\downarrow\colon PX\to P^2X$ has components $\downarrow_C$ 
    taking a transformation $\theta\colon\mathbf yC\times X\Rightarrow\Omega$ 
    to one $\downset_C(\theta)\colon \mathbf yC\times PX\Rightarrow\Omega$ 
    components
        \begin{equation*}
            \theta_C(\theta)_D\colon\mscr C(D,C)\times PX(C)\to \Omega(C)
        \end{equation*}  
    with action 
        \begin{equation} \label{equation:downsetarrowactionforpowersetobject}
            \downset_C(\theta)_D(f,\tau) := (\tau\leq \theta\cdot f).
        \end{equation}
    Here the action $\theta\cdot f$ is essentially precomposition with $f$. 
    That is, the transformation $\theta\cdot f$ is the composite 
    $\theta(f-,-)\colon\mathbf yD\times X\Rightarrow\Omega$ having the action 
        \begin{equation*}
            (\theta\cdot f)_B(g,b) = \theta_B(fg,b)
        \end{equation*}
    where $(\theta\cdot f)_B\colon \mscr C(B,D)\times XB\to \Omega(B)$. Thus, 
    more precisely, $\tau\leq \theta\cdot f$ from the computation in 
    \cref{equation:downsetarrowactionforpowersetobject} is the sieve of arrows 
    $g\colon B\to D$ for which $\tau\cdot f \leq \theta \cdot fg$ holds where 
    the action is described as above. \qed
\end{example}

\begin{example}[Singleton Morphism]
    \label{example:completedescriptionofsingletonmorphisminpresheaftoposes}
    In the special case of the previous example where $r\colon R\to X\times X$ 
    is the diagonal $\Delta\colon X\to X\times X$ and $\leq_r$ is the identity 
    predicate $=_X$, the upset (and downset) morphisms are traditionally denote 
    by $\sigma_X\colon X\to PX$, thought of as the singleton morphism 
    $x\mapsto \lbrace x\rbrace$. In this case, the action is 
        \begin{equation*}
            \sigma_X(b)_D(f,a) = (a =_Xb\cdot f)
        \end{equation*} 
    using the description of \cref{equation:downsetactiononcomponents}. Of 
    course this result is a sieve, namely, the sieve of arrows $g\colon B\to D$ 
    for which $a\cdot g = b\cdot fg$ holds as a literal equality in $XB$. \qed
\end{example}

\begin{construction} \label{construction:powersetofupsetarrowdescription}
    Fix a relation $(X,r)$ in a presheaf topos $\mscr E = [\mscr C^{op},\set]$ 
    and let $\uparrow \colon X\to PX$ denote the upperset arrow from 
    \cref{example:completedescriptionofupsetanddownsetarrowsinpresheaftoposes}. 
    The arrow $P(\uparrow)\colon P^2X\to PX$ associated to $\uparrow$ under the 
    functor $P\colon \mscr E^{op}\to \mscr E$ is the unique one making the 
    diagram 
        \[\begin{tikzcd}
            {X\times PX} & \Omega \\
            {X\times P^2X} & {PX\times P^2X}
            \arrow["{\in_X}", from=1-1, to=1-2]
            \arrow["{1\times P(\uparrow)}", from=2-1, to=1-1]
            \arrow["{\uparrow\times 1}"', from=2-1, to=2-2]
            \arrow["{\in_{PX}}"', from=2-2, to=1-2]
        \end{tikzcd}\]
    commute. By this universal property, the construction of each $\in$-morphism, and naturality, it follows that each component 
    $P(\uparrow)_C(\theta)\colon \mathbf yC\times X\Rightarrow\Omega$ has the 
    action 
        \begin{equation*}
            P(\uparrow)_C(\theta)_D(f,a) = \theta_D(f,a)
        \end{equation*}
    where $\theta\colon \mathbf y C\times PX\Rightarrow\Omega$ and $f\colon 
    D\to C$ is an arrow and $a\in XD$ is any element. \qed
\end{construction}

\begin{example}[Necessity Operator] 
    \label{example:necessityoperatorinpresheaftoposes}
    Let $(X,r)$ again denote a relation in a presheaf category $\mscr E = 
    [\mscr C^{op},\set]$. The interior operator $\interior\colon PX\to PX$ can 
    be described in the following way on the basis of our computations of 
    $P(\uparrow)$ and $\downarrow$ above. That is, each component 
    $\interior_C\colon PX(C)\to PX(C)$ acts on transformation $\theta\colon 
    \mathbf yC\times X\Rightarrow\Omega$, yielding a new transformation 
    $\interior_C(\theta)$ of the same form. By the descriptions in 
    \cref{example:completedescriptionofupsetanddownsetarrowsinpresheaftoposes} 
    and \cref{construction:powersetofupsetarrowdescription} immediately above 
    and the definition $\interior = P(\uparrow)\circ \downarrow$, this is the 
    composite
        \[\begin{tikzcd}
            {\mathbf y C\times X} & {\mathbf yC\times PC} & \Omega
            \arrow["{1\times \uparrow}", Rightarrow, from=1-1, to=1-2]
            \arrow["{\downarrow_C\!(\theta)}", Rightarrow, from=1-2, to=1-3]
        \end{tikzcd}\]
    whose components 
        \begin{equation*}
            \interior_C(\theta)_D\colon \mathscr C(D,C)\times XD\to \Omega(D)
        \end{equation*}
    have action 
        \begin{equation*}
            \downset_C(\theta)_D(f,\upset_D(a)) = (\upset_D(a)\leq \theta\cdot f)
        \end{equation*}
    specifically by \cref{equation:downsetarrowactionforpowersetobject} where 
    $f\colon D\to C$ is any arrow and $a\in XD$ is any element. So, more 
    colloquially, the value of the $D$-component of $\interior_C(\theta)$ on 
    such a pair $(f,a)$ is the sieve of arrows $g\colon B\to D$ such that 
    $\uparrow_D\!\!(a)\cdot g \leq \theta\cdot fg$ holds in $\Omega(B)$. This 
    is evidently a statement that the sieve on the left side of the inequality 
    is contained in the sieve on the right. In the special case that we have 
    $\phi\colon X\to \Omega$ and we consider its transpose 
    $\hat\phi\colon 1\to PX$, each such component therefore has the action
        \begin{equation*}
            \downset_C(\hat\phi(\ast))_D(f,\upset_D(a)) = (\upset_D(a)\leq \hat\phi_C(\ast)\cdot f).
        \end{equation*}  
    In particular, both sides of the inequality on the right are transformations 
    $\mathbf yD\times X\Rightarrow\Omega$ and $\leq$ is the order on $PX$. The 
    value of the inequality is the sieve of arrows $g\colon B\to D$ such that 
    $\upset_D(a)\cdot g\leq \hat\phi_C(\ast)\cdot fg$ holds in $\Omega (B)$. 
    Each side of the last inequality are transformations 
    $\mathbf yB\times X\to \Omega$, which on components $A$ for any 
    $h\colon A\to B$ and $b\in XA$ have the actions 
        \begin{equation*}
            (\upset_D(a)\cdot g)_A(h,b) = (a\cdot gh \leq_r b)
        \end{equation*}
    in the order on $X$ by \cref{equation:upsetactiononcomponents}; and also
        \begin{equation*}
            (\hat\phi_C(\ast)\cdot fg)_A(h,b) = \phi_A(b)
        \end{equation*}
    by \cref{equation:actionofphihatinpresheaftopos}. So, in summary, what we 
    have derived is the statement that the sieve $(a\cdot gh \leq_r b)$ as a 
    set must be contained in the sieve $\phi_A(b)$ for all appropriate elements 
    $a, b$ and arrows $g, h$. We can now derive the forcing semantics of $\Box$ 
    in the presheaf topos directly. By transposing and just looking at the 
    diagonal component where $C=D$, we end up with the membership statement
        \begin{equation*}
            (1_X\in \upset_C(a)) \leq (1_X \in \hat\phi_C(\ast)\cdot f).
        \end{equation*}
    The $\leq$ is equivalently the implication operator on $\Omega$, so if we 
    ask that this is true of an element $b\in XC$, we have that the implication
        \begin{equation*}
            b\in \upset_C(a) \text{ implies } b\in \hat\phi_C(\ast)\cdot f
        \end{equation*}
    holds by usual forcing semantics. But by transposing, the antecedent is 
    $a\leq b$ in the order on $XC$, while the consequent is $\phi_C(b)$. This 
    yields the necessity forcing statement in 
    \cref{proposition:presheafforcingsemanticsinteriorandclosure}. \qed
\end{example}

\subsection{The Indistinguishability Interpretation}
\label{subsection:indistinguishability}

This subsection will give an epistemic interpretation to the forcing statements 
of the previous development. Partly this is motivated by \emph{epistemic modal 
logic} and partly this is suggested by the histories presheaf itself. For on 
the one hand epistemic modal logic is modeled by Kripke frames and its 
axiomatics are the systems (S4.2) and (S5). On the other hand, the histories 
presheaf, involving histories and events, lends itself, by an imaginative leap, 
to the view that propositions might be true or \emph{verifiable} from the 
perspective of a particular event and in a certain history. In fact, our 
semantics are precisely that a space-time event witnesses a necessary 
proposition in a particular history if, and only if, it witnesses the 
proposition itself in any other history obviously undivided at that event. So, 
in a manner of speaking, a proposition is necessarily true in a particular 
history at an event if, and only if, the proposition appears to be the case 
from the perspective of the event in any history indistinguishable from the the 
one in question.

Epistemic logic \cite{hintikka1962}, \cite{ditmarsch2015} is the modal logic of 
knowledge and belief. There are usually two operators $K_a\phi$ and $B_a\phi$ 
glossed as \emph{agent $a$ knows that} $\phi$ and \emph{agent $a$ believes 
that} $\phi$. At least $K$ tends to be asked to satisfy the comonad axioms. For 
this reason, there is an analogy between knowledge and modal necessity. In fact 
\cite[\S 2.3]{baltagetal2015} makes this connection precise: topological 
interior models knowledge and open sets model evidence. Belief is then modeled 
by the closure of the interior \cite[\S 3.2]{baltagetal2015}. When $K$ is asked 
to have a dual operator, usually classical (S5) provides the associated 
axiomatics. Equivalence relations then comprise the natural class of Kripke 
models. The \emph{indistinguishability interpretation} of modal epistemic 
statements originated with \cite{lehmann1984}. In this set-up, there is a 
family of equivalence relations $\sim_i$ parameterized by epistemic agents $i$ 
of the model. Quoting \cite[\S]{lehmann1984} directly:
    \begin{quote}
        ``The intuitive meaning of $\sim_i$ is the following: two histories $\sigma$ and $\tau$ are equivalent with respect to $i$ and $k$ if the knowledge that person $i$ has gathered about the world and its history up to instant $k$ cannot enable him to distinguish between $\sigma$ and $\tau$. In other words if the real history is $\sigma$, then, for all that person $i$ knows at instant $k$, the real history could as well be $\tau$.''
    \end{quote}
So, a modal statement $K_i\phi$ is true for a history $\sigma$ if, and only if, 
$\phi$ is true in every history $\tau$ related to $\sigma$ under the provided 
equivalence relation for that agent. In other words, as in the quote, an agent 
is said to know that $\phi$ in a particular history if $\phi$ appears to be the 
case in every history indistinguishable from it.

In the following, we thus allow ourselves to think of the parameterizing 
category $\mscr C$ as a system of agents and their relations. We think of the 
values $XC$ associated to agents $C\in\mscr C$ as sets of histories to which 
they have access or belong. Specializing to the case of our world $\mscr W$, 
this is exactly what the histories presheaf does. And it is not too much of a 
stretch to view point events as agents since anyway observers have definite 
space-time coordinates in a given inertial frame. There is further warrant for 
it in the intuitive interpretation of the intuitionistic semantics of 
\cite[\S 1.1]{kripke1963}, which we quote as follows:
    \begin{quote}
        ``We intend the nodes $\mathbf H$ to represent points in time (or ``evidental situation''), at which we may have various pieces of information. If, at a particular point $\mathbf H$ in time, we have enough information to prove a proposition $A$, we say that $\phi(A,\mathbf H) = \mathbf T$; if we lack such information, we say that $\phi(A,\mathbf H)=\mathbf F$. If $\phi(A,\mathbf H)=\mathbf T$ we can say that $A$ has been \emph{verified} as the point $\mathbf H$ in time; if $\phi(A,\mathbf H) = \mathbf F$, then $A$ has \emph{not been verified} at $\mathbf H$.''
    \end{quote}
In other words, the nodes, i.e. objects, of the underlying system, i.e. 
category, are moments in (space-) time which represent or have attached to 
them, or in which are available, a situation or configuration of information 
that serves as evidence for the potential confirmation of a proposition under 
consideration. It should be noted that in the quote $\phi$ stands for the 
Kripke model itself associating $\mathbf T$ or $\mathbf F$ to the proposition 
$A$ and the stage $\mathbf H$. So, that its value is $\mathbf F$ means only 
that $A$ has not been verified at $\mathbf H$ (given the information available 
at that instant), but leaves open the possibility of being verified at some 
later stage. For this reason, an $\mathbf F$ does not count as disconfirmation 
of a proposition at $\mathbf H$. 

In any case, we shall proceed as though point events $e$ in branching 
space-time $\mscr W$ represent, or correspond to, \emph{evidential situations}, 
potentially available to any agent or observer occupying that space-time 
position, by means of which propositions $\phi\colon X\to \Omega$ may be 
verified. Histories of course are the elements of the values of the histories 
presheaf, so we must take account of alternatives and branching as well: for a 
given proposition $\phi\colon H\to \Omega$, we take $\phi_e(h)$ to represent 
the value of whether $\phi$ is or has been \emph{verified at $e$ in $h$}. 
Perhaps better, since the logic of topos is not Boolean, is that it is the 
\emph{extent to which} such a verification has been made. This viewpoint 
comports well with the indistinguishability interpretation. As above, we denote 
by $\Box\phi$ the morphism 
    \begin{equation*}
        \Box\phi := \interior (\hat\phi)\colon 1\to PH
    \end{equation*}
where $\hat\phi\colon 1\to PH$ is the name of $\phi$ given by exponential 
transpose. By interior semantics in \cref{proposition:interiorsemantics} and \cref{proposition:presheafforcingsemanticsinteriorandclosure} we 
then have
    \begin{align*}
        e\Vdash \Box\phi(h) &\text{ if, and only if, } e\Vdash h\in_H\interior(\hat\phi) \\
                            &\text{ if, and only if, } e\Vdash h\approx_e h' \text{ implies } e\Vdash h'\in_H\hat\phi \\
                            &\text{ if, and only if, } e\Vdash h\approx_e h' \text{ implies } e \Vdash \phi(h').
    \end{align*}
Thus, our gloss on $e\Vdash \Box\phi(h)$ is the statement that a point event
$e$ forces \emph{necessarily} $\phi$ at a history $h$ containing $e$ if, and 
only if, $e$ forces $\phi$ at every history $h'$ obviously undivided from $h$ 
at $e$. More colloquially, then, $\phi$ is necessarily true of $h$ at an event 
$e$ if, and only if, it is true of all histories obviously undivided from $h$ 
at $e$. Interpreting \emph{obviously undivided} as the relationship encoding 
the fact that $e$ does not, or cannot, distinguish between any two histories 
for which $e$ is not a choice point, the statement $e\Vdash \Box\phi(h)$ thus 
says that the proposition $\phi$ is necessarily true of $h$ at the event $e$ 
if, and only if, it is true of any history indistinguishable from $h$ at $e$. 
Likewise for the possibility operator, we shall denote by $\Diamond \phi$ the 
morphism 
    \begin{equation*}
        \Diamond\phi:=\closure(\hat\phi)\colon 1\to\ PH.
    \end{equation*}
By the closure semantics of  \cref{corollary:closureforcingsemantics}, we have 
then that 
    \begin{align*}
        e\Vdash \Diamond\phi(h) &\text{ if, and only if, } e\Vdash h\in_H\closure(\hat\phi) \\
                                &\text{ if, and only if, } e\Vdash h \approx_e h' \text{ and } e \Vdash h'\in_H\hat\phi \text{ for some } h' \\
                                &\text{ if, and only if, } e\Vdash h\approx_eh' \text{ and } e\Vdash \phi(h') \text{ for some } h'.
    \end{align*}
That is, $e$ forces \emph{possibly} $\phi$ at $h$ if, and only if, there is 
some history $h'$ indistinguishable from $h$ at $e$ of which $\phi$ is true at 
stage $e$. So, to distinguish from necessity, possibility is just the statement 
that the proposition is true of some history, potentially distinct from the one 
under consideration, but which is indistinguishable from it at the current 
stage, of which the proposition is true at the current stage.

\subsection{First-Order Considerations}
\label{subsection:firstorderconsiderations}

In this final subsection, we begin the project of examining first-order 
formulas in light of the semantics developed so far. Our first question is 
whether our models validate the \emph{Barcan formulas} of \cite{barcan_1947}. 
Two of these are discussed below in \cref{corollary:barcanformula} and the 
subsequent examples. We shall look finally at the existence predicates of 
\cite{scott1979} in light of these results and the actualist thesis 
\cref{equation:actualist-thesis}. Throughout we shall use the recipe for 
formation of quantified formula in toposes as discussed in 
\cref{example:internalquantifiersasadjoints}. Our main tool will be the 
presheaf semantics in 
\cref{proposition:presheafforcingsemanticsinteriorandclosure}.

The first result shows that in any topos, the interior operator on the 
subobject classifier is trivial.

\begin{prop} \label{proposition:NecessityOnOmegaIsTrivial}
    In any topos, $\interior \colon \Omega\to \Omega$ is the identity.
\end{prop}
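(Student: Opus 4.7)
The plan is to specialize the forcing semantics of interior, \cref{proposition:interiorsemantics}, to the case $X = 1$, using the identification $\Omega \cong P1$. The only relation on the terminal object is the identity $\Delta_1\colon 1 \to 1 \times 1$, which is trivially reflexive; its classifying morphism $\leq_r \colon 1 \times 1 \to \Omega$ is forced to be $\top$. Hence for any generalized elements $a, y \colon W \rightrightarrows 1$, the composite $\leq_r \circ \langle a, y\rangle$ equals $\top_W$, so $W \Vdash a \leq_r y$ holds universally.

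Given any $s \colon W \to \Omega$ and the unique $a = {!} \colon W \to 1$, \cref{proposition:interiorsemantics} yields
$$W \Vdash a \in_1 \interior(s) \iff W \Vdash \forall y \bigl((a \leq_r y) \Rightarrow (y \in_1 s)\bigr).$$
Since the antecedent is always valid and $y$ ranges over generalized elements of $1$ (which are all terminal maps), both the implication and the universal quantifier are vacuous, so the right-hand side collapses to $W \Vdash (! \in_1 s)$. Under the canonical identification $1 \times \Omega \cong \Omega$, the membership $\in_1$ is the identity on $\Omega$, whence $W \Vdash (! \in_1 s)$ is just $W \Vdash s$, which in turn coincides with $W \Vdash a \in_1 s$.

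Thus $(a \in_1 \interior(s)) = (a \in_1 s)$ as morphisms $W \to \Omega$ for every pair $\langle a, s\rangle \colon W \to 1 \times \Omega$. Taking $W = 1 \times \Omega$ with the identity as the classifying pair yields $\in_1 \circ (1 \times \interior) = \in_1$, and the uniqueness clause in \cref{definition:power-object} then forces $\interior = 1_\Omega$. There is no real obstacle here: the result amounts to the observation that the Alexandrov interior collapses when the base object is terminal, reflecting the familiar fact that modalities on the algebra of truth values alone carry no content beyond the trivial relation on $1$.
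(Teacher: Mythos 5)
Your argument is essentially correct, but it takes a genuinely different route from the paper. The paper argues diagrammatically: it observes that the segment arrow $1\to P1$ is the generic subobject $\top$, realizes $\interior = P(\uparrow)\circ\downarrow$ on $\Omega$ as the transpose of $\top\Rightarrow_\Omega 1$, and then shows by two pullback squares that $(\top\Rightarrow 1)=1$, so uniqueness of classifying maps gives $\interior = 1_\Omega$. You instead specialize the already-proved forcing semantics of interior (\cref{proposition:interiorsemantics}) to $X=1$, collapse the quantified implication using the fact that $\leq_r$ on $1$ is $\top$, and then recover $\interior = 1_\Omega$ from $\in_1\circ(1\times\interior)=\in_1$ and uniqueness of transposes. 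What your approach buys is economy: it reuses the Kripke--Joyal machinery the paper has built rather than redoing a transpose computation; what the paper's approach buys is an explicit identification of the interior on $\Omega$ as the internal map $\omega\mapsto(\top\Rightarrow\omega)$, independent of the semantic apparatus. Two small points to tighten: first, $\Delta_1$ is not the \emph{only} relation on $1$ (the empty subobject of $1\times 1$ is another); it is the only reflexive one, and it is the order the paper intends, so you should say that. Second, the step from the forcing equivalence to the equality $(a\in_1\interior(s)) = (a\in_1 s)$ of morphisms $W\to\Omega$ does not follow from the single biconditional at stage $W$; it follows because you established the biconditional at \emph{every} stage and hence for every restriction along $b\colon V\to W$, so the two maps classify subobjects of $W$ with the same generalized elements and are therefore equal. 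With that spelled out, instantiating at the generic pair $1_{1\times\Omega}$ and invoking uniqueness in \cref{definition:power-object} completes the proof as you say.
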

\begin{proof}
    The downset arrow $1\to P1$ is the generic subobject 
    $\top\colon 1\to P1=\Omega$. In the following diagram, the top square 
    commutes by the definition of $P(\uparrow)$ as a transpose, while the 
    triangle commutes by the definition of $\downarrow$ also as a transpose:
        \[\begin{tikzcd}
            {1\times P1} && \Omega \\
            {1\times P\Omega} & {\Omega\times P\Omega} & \Omega \\
            {1\times \Omega} & \Omega\times\Omega
            \arrow["{1\times P(\uparrow)}", from=2-1, to=1-1]
            \arrow["{1\times \downarrow}", from=3-1, to=2-1]
            \arrow["{\top\times 1}"', from=3-1, to=3-2]
            \arrow["{1\times \downarrow}", from=3-2, to=2-2]
            \arrow["{\in_\Omega}", from=2-2, to=2-3]
            \arrow[Rightarrow, no head, from=2-3, to=1-3]
            \arrow["{\Rightarrow_{\Omega}}"'{pos=0.6}, curve={height=6pt}, from=3-2, to=2-3]
            \arrow["{\top\times 1}", from=2-1, to=2-2]
            \arrow["{\in_1}", from=1-1, to=1-3]
        \end{tikzcd}.\]
    On the whole, this realizes $P(\uparrow)\circ\downarrow$ as the transpose 
    of $\top\Rightarrow_\Omega 1$. But it is easy to see that this arrow must 
    be the identity on $\Omega$, for the square in the diagram
        \[\begin{tikzcd}
            1 & {(\leq)} & 1 \\
            1\times\Omega & \Omega\times\Omega & \Omega
            \arrow[from=1-1, to=1-2]
            \arrow[from=1-2, to=2-2]
            \arrow["{1\times \top}"', from=1-1, to=2-1]
            \arrow["{\top\times 1}"', from=2-1, to=2-2]
            \arrow["{!}", from=1-2, to=1-3]
            \arrow["\top", from=1-3, to=2-3]
            \arrow["{\Rightarrow_\Omega}"', from=2-2, to=2-3]
        \end{tikzcd}.\]
    are both pullbacks. The rightmost is a pullback by the definition of the 
    implication operator; the leftmost is one by a direct computation. Since 
    $(\top\Rightarrow 1) = 1$, this shows the result by uniqueness.
\end{proof}

\begin{cor} \label{corollary:barcanformula}
    In any topos, with $\Box$ interpreted as interior, any preordered object $(X,r)$ validates two of the so-called \emph{Barcan Formulas}, namely,  
        \begin{enumerate}
            \item $\forall x \Box\phi(x) \supset \Box\forall x\phi(x)$
            \item $\exists x \Box\phi(x) \supset \Box\exists x\phi(x)$.
        \end{enumerate}
\end{cor}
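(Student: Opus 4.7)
The plan is to combine two results already in hand: Proposition \ref{proposition:NecessityOnOmegaIsTrivial} and the counit law for the interior operator (Lemma \ref{lemma:reflexivityproof}). The key observation is that the consequents on the right-hand sides of both Barcan formulas involve $\Box$ applied to a \emph{closed} formula, i.e.\ a morphism into $\Omega$, whereas the antecedents involve $\Box$ applied to a morphism with a free variable of type $X$, interpreted via $\interior\colon PX\to PX$. Since interior on $\Omega$ itself is trivial by Proposition \ref{proposition:NecessityOnOmegaIsTrivial}, both Barcan consequents collapse to their un-boxed versions.

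First I would unpack the notation: for $\phi\colon X\to\Omega$, the formula $\Box\phi(x)$ is the composite $(1_X\in_X)\circ(1_X\times \interior\hat\phi)\colon X\to X\times PX\to\Omega$, while $\Box\forall x\phi(x)$ and $\Box\exists x\phi(x)$ are $\interior\colon\Omega\to\Omega$ applied to $\forall_X\hat\phi,\exists_X\hat\phi\colon 1\to\Omega$ as described in Example \ref{example:internalquantifiersasadjoints}. By Proposition \ref{proposition:NecessityOnOmegaIsTrivial} these two latter expressions simplify to $\forall x\phi(x)$ and $\exists x\phi(x)$ respectively.

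Next I would invoke the counit law $\interior\leq 1_{PX}$ from Lemma \ref{lemma:reflexivityproof}, which applies because $(X,r)$ is preordered, hence reflexive. Transposing and evaluating at a generic element $x\colon X\to X$ yields the pointwise inequality $\Box\phi(x)\leq \phi(x)$ in $\Omega$, or equivalently, for any $a\colon W\to X$, the implication $W\Vdash \Box\phi(a)$ implies $W\Vdash\phi(a)$. (One can also see this directly from the interior forcing semantics of Proposition \ref{proposition:interiorsemantics} by instantiating the bound variable to $a$ itself, using $W\Vdash a\leq_r a$.) Since both internal quantifiers $\forall_X,\exists_X$ are order-preserving (being adjoints in Example \ref{example:internalquantifiersasadjoints}), applying them to both sides yields $\forall x\Box\phi(x)\leq\forall x\phi(x)$ and $\exists x\Box\phi(x)\leq\exists x\phi(x)$. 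Combined with the simplifications from the previous paragraph, these are precisely the two claimed Barcan implications.

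There is no real obstacle here: the argument is entirely formal, relying only on the counit and the triviality of $\interior$ on $\Omega$. The only point demanding a moment of care is the passage between the internal inequality $\interior\leq 1_{PX}$ on power-objects and the pointwise inequality $\Box\phi(x)\leq\phi(x)$ in $\Omega$, which is handled by Corollary \ref{corollary:elementwisesubsetcontainment} applied to $\hat\phi\colon 1\to PX$ together with the transpose isomorphism in \cref{equation:isomorphismofHeytingalgebras}. Once that translation is made, monotonicity of $\forall_X$ and $\exists_X$ finishes both cases at once.
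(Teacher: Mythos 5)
Your proposal is correct and follows essentially the same route as the paper's own proof: use \cref{proposition:NecessityOnOmegaIsTrivial} to strip the box from the closed consequents, the counit law $\interior\leq 1_{PX}$ of \cref{lemma:reflexivityproof} to get $\Box\phi\leq\phi$, and monotonicity of $\forall_X$ and $\exists_X$ (as adjoints) to conclude. The extra care you take translating the power-object inequality into the pointwise one via \cref{corollary:elementwisesubsetcontainment} is a detail the paper leaves implicit, but it is the same argument.
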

\begin{proof}
    We shall prove the first one; that of the second is analogous. 
    \cref{proposition:NecessityOnOmegaIsTrivial} shows that for any truth value 
    $\omega\colon 1\to \Omega$, we have $\Box\omega = \omega$. In particular, 
    this is the case if $\omega$ is any closed formula. Thus, 
    $\Box\forall_X\phi = \forall_X\phi$ holds for any $\phi\colon X\to \Omega$ 
    in the topos. It therefore suffices to prove that 
    $\forall_X\Box\phi\Rightarrow_{\Omega}\forall_X\phi$ holds, or equivalently 
    that $\forall_X\Box\phi\leq \forall_X\phi$ holds in the order on $\Omega$, 
    but this follows immediately. For $\Box\phi\leq \phi$ always holds by 
    \cref{lemma:reflexivityproof} and $\forall_X$, being a right adjoint, is in 
    particular a functor, that is, order-preserving.
\end{proof}

\begin{remark}
    With regard to existential quantification, in fancier language, this 
    corollary implies that the \emph{de re} modality $\exists x\Box\phi$ 
    implies the \emph{de dicto} modality $\Box\exists x\phi$ in the sense that 
    any preordered object $(X,r)$ in a topos validates the implication 
    $\exists x\Box\phi \supset \Box \exists \phi$. This is expected; as 
    remarked in \cite[\S 4.1]{awodeykishida2008} this formula is provable in 
    first-order S4, hence it is valid in all topological semantics including 
    that arising from the Alexandrov topology. The point is that this 
    Alexandrov topology result caries over into any topos. What is suprising is 
    that the first formula $\forall x \Box\phi(x) \supset \Box\forall x\phi(x)$ 
    is validated in any such model. For in \cite[\S 4.1]{awodeykishida2008}, 
    there is a sheaf-theoretic counter model. Thus, our result shows that this 
    Barcan formula is true for a restricted class of topological models and 
    holds in any such model in any topos. Concerning the converses, that of the 
    first formula is provable in first-order S4, whereas that of the second has 
    a counter-model.\qed
\end{remark}

\begin{example} 
    \label{example:derededictoconversefalse}
    We shall invalidate $\Box\exists x\phi\supset\exists x\Box\phi$ in the 
    category of sets. Let $\mscr E = [\mathbf 1,\set] = \set$ and consider the 
    set $\mathbb N$ of natural numbers viewed as a poset with its usual 
    ordering. Let $\phi\colon \mathbb N \to \mathbf 2$ denote the proposition 
    \emph{is even}. In $\set$, the operators $\Box$ and $\Diamond$ have their 
    usual description as Alexandrov interior and closure from 
    \cref{equation:interiorAlexandrovTopologyElementDescription} and 
    \cref{equation:closureAlexandrovTopologyElementDescription}. In this case, 
    $\Box\phi$ is the set 
        \begin{equation*}
            \Box \phi = \lbrace n\in \mathbb N \mid \text{for all } m\in\mathbb N \text{ if } n\leq m \text{ then } m\in \phi \rbrace
        \end{equation*} 
    which is of course equal to $\emptyset$ since for any given natural number, 
    there is an odd one greater than or equal to it. Additionally, we can use 
    the computation of `$\exists$' from \cite[\S I.9]{maclanemoerdijk1992} to 
    describe $\exists_\mathbb N\colon P\mathbb N \to \mathbf 2$. In the case of 
    our $\phi\subset\mathbb N$, we have
        \begin{equation*}
            \exists_\mathbb N \phi = \lbrace y \in \mathbf 1 \mid \text{there is } n \in\mathbb N \text{ such that } n\in \phi  \rbrace = 1
        \end{equation*}
    since there is an even natural number and
        \begin{equation*}
            \exists_\mathbb N\Box\phi = \lbrace y \in\mathbf 1 \mid \text{there is } n\in \mathbb N \text{ such that } n \in \Box\phi \rbrace = 0
        \end{equation*}
    since again for every natural number there is an odd natural number greater 
    than or equal to it. So, the \emph{de re} statement will imply the \emph{de 
    dicto} since $0\leq 1$, but the converse, namely, 
    $\exists_\mathbb N\phi \leq \exists_\mathbb N\Box\phi$ is impossible. \qed
\end{example}

Likewise, the possibility operator on $\Omega  = P1$ is trivial. Recall that 
the closure operator appears in  \cref{definition:internalclosureoperator} as 
$\closure = \bigvee\exists_\downarrow$. Thus, the proof of the next result 
relies partly upon  \cref{construction:internaljoin} which developed the 
internal join operator.

\begin{prop} \label{proposition:possibilitytrivialonOmega}
    In any topos $\closure \colon \Omega\to \Omega$ is the identity morphism.
\end{prop}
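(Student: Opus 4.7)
The plan is to mirror the structure of the preceding Proposition by leveraging the adjunction between closure and interior that is available for any equivalence relation, combined with the fact just established that $\interior\colon \Omega\to\Omega$ is the identity. The key observation is that $\Omega = P1$, and the terminal object $1$ admits a unique internal relation, namely the identity $\Delta_1\colon 1\to 1\times 1$, which is trivially an internal equivalence relation. Once we are in this setting, \cref{theorem:equivalencerelationmodels(IS5)} supplies exactly the adjoint we need.

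Concretely, first I would note that the relation on $1$ induces, via \cref{definition:internalclosureoperator} and \cref{definition:internalinterioroperator}, the interior and closure operators of the statement, instantiated at $X=1$. Then, applying \cref{theorem:equivalencerelationmodels(IS5)} to this equivalence relation, I obtain the internal adjunction $\closure\dashv\interior$ on $P1=\Omega$. By the preceding \cref{proposition:NecessityOnOmegaIsTrivial}, the right adjoint $\interior$ equals $1_\Omega$. Since $1_\Omega$ is self-adjoint and left adjoints in the internally antisymmetric poset $\Omega$ (an internal Heyting algebra) are unique, this forces $\closure = 1_\Omega$.

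If a more self-contained argument is desired, the alternative is to compute directly from \cref{corollary:closureforcingsemantics}. For any $s\colon W\to\Omega$, applying the forcing semantics of closure at $X=1$ yields
    \begin{equation*}
        W\Vdash {!} \in_1 \closure(s) \text{ if, and only if, } W\Vdash \exists y\,\bigl(({!}\leq y)\wedge (y\in_1 s)\bigr).
    \end{equation*}
The antecedent $({!}\leq y)$ is automatic since the only relation on $1$ is trivial and reflexive, and any $y\colon V\to 1$ is forced to be the unique $!_V$; unwinding the existential via an epi $p\colon V\twoheadrightarrow W$ then reduces the right-hand side to $V\Vdash sp$, which, by $p$ being epi, is equivalent to $W\Vdash s$. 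Taking $W=\Omega$ and $s=1_\Omega$ then gives $\closure = 1_\Omega$.

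I do not expect a genuine obstacle here: the proof is essentially a one-line consequence of the previous proposition together with \cref{theorem:equivalencerelationmodels(IS5)}. The only subtlety is checking that the trivial relation on $1$ is covered by the hypothesis of the equivalence-relation theorem, which it is by vacuity.
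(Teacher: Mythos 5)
Your argument is correct, but it takes a genuinely different route from the paper. The paper proves the proposition by direct computation with the explicit constructions: it observes that the downset arrow $1\to P1$ is $\top$, that $\exists_\downarrow=\exists_\top$ is the transpose of the meet $\wedge\colon\Omega\times\Omega\to\Omega$, and then combines the construction of $\bigvee=\Omega^\top$ from \cref{construction:internaljoin} with the pullback witnessing $\top\wedge 1=1$ to get $\bigvee\exists_\top=1$ by uniqueness of transposes. You instead note that the operators on $\Omega=P1$ are those induced by the canonical order on $1$, which is an internal equivalence relation, so \cref{theorem:equivalencerelationmodels(IS5)} gives $\closure\dashv\interior$, and \cref{proposition:NecessityOnOmegaIsTrivial} then yields $1\leq\closure$ and $\closure\leq 1$ from the unit and counit, whence $\closure=1_\Omega$ by internal antisymmetry of $\Omega$. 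This is shorter, reuses earlier results, and makes the duality with the $\Box$ case transparent, at the price of invoking the (IS5) theorem where the paper stays elementary; there is no circularity, since that theorem precedes these propositions and does not depend on them. Two small caveats. First, $1$ does not admit a \emph{unique} relation (any subterminal, e.g.\ the empty relation, gives one); what matters is that both propositions concern the operators induced by the total order $1\cong 1\times 1$, i.e.\ the diagonal, which is indeed an equivalence relation, so your application of the theorem is legitimate once this relation is fixed. Second, in your fallback forcing argument the final step is loosely phrased: evaluating at the single element $s=1_\Omega$ does not conclude anything, since neither side is forced at the generic element; rather, the equivalence holding for \emph{all} generalized elements $s\colon W\to\Omega$ shows that $\closure$ and $1_\Omega$ classify the same subobject $\top\colon 1\rightarrowtail\Omega$, and they are then equal by uniqueness of classifying arrows.
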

\begin{proof}
    First note that $\exists_\downarrow$ is actually the transpose of the meet 
    $\wedge\colon \Omega\times\Omega\to\Omega$. This is because we are looking 
    at the order on the terminal, $1$, which is the isomorphism 
    $1\cong 1\times 1$. As a result, $\downarrow\colon 1\to P1$ as in the 
    diagram 
        \[\begin{tikzcd}
            {1\times P1} & \Omega \\
            {1\times 1}
            \arrow["{\in_1}", from=1-1, to=1-2]
            \arrow["1\times\downarrow", from=2-1, to=1-1]
            \arrow["\leq"', curve={height=6pt}, from=2-1, to=1-2]
        \end{tikzcd}\]
    must be $\top\colon 1\to \Omega$. So, $\exists_\downarrow = \exists_\top$ 
    is formed as in \cref{subsection:toposesandinternallogic}. That is, first 
    take the classifying morphism as on the bottom of the diagram:
        \[\begin{tikzcd}
            1 & 1 \\
            {1\times P1} & \Omega \\
            {P1\times P1} & \Omega
            \arrow[Rightarrow, no head, from=1-1, to=1-2]
            \arrow["\top", from=1-2, to=2-2]
            \arrow["{\langle1,\top\rangle}"', from=1-1, to=2-1]
            \arrow["{\langle \top,1\rangle}"', from=2-1, to=3-1]
            \arrow["\wedge"', dashed, from=3-1, to=3-2]
            \arrow[Rightarrow, no head, from=2-2, to=3-2]
            \arrow["{\in_1}", from=2-1, to=2-2]
        \end{tikzcd}\]
    Note that $\in_1$ is an isomorphism, meaning that $\langle 1,\top\rangle$ 
    is the classified subobject; likewise $\wedge$ is \emph{defined} as the 
    classifying morphism of $\langle \top,\top\rangle$. Now, 
    $\exists_\downarrow = \exists_\top$ is therefore the transpose of $\wedge$, 
    as claimed. Now, put this together with the construction of 
    $\bigvee\colon P\Omega\to \Omega$. We have a diagram
        \[\begin{tikzcd}
            {1\times P1} & \Omega & \Omega \\
            {1\times P\Omega} & {\Omega\times P\Omega} & \Omega \\
            {1\times P1} & {\Omega\times P1}
            \arrow["{\in_1}", from=1-1, to=1-2]
            \arrow[Rightarrow, no head, from=1-2, to=1-3]
            \arrow["1\times\bigvee", from=2-1, to=1-1]
            \arrow["{\top\times 1}", from=2-1, to=2-2]
            \arrow["{\in_\Omega}", from=2-2, to=1-2]
            \arrow["{\in_\Omega}", from=2-2, to=2-3]
            \arrow[Rightarrow, no head, from=2-3, to=1-3]
            \arrow["{\top\times 1}"', from=3-1, to=3-2]
            \arrow["{1\times\exists_\top}", from=3-2, to=2-2]
            \arrow["{1\times\exists_\top}", from=3-1, to=2-1]
            \arrow["\wedge"', curve={height=12pt}, from=3-2, to=2-3]
        \end{tikzcd}\]
    Note that the top-left square commutes by the construction of $\bigvee$ as 
    $\Omega^\top$. But the point of presenting this diagram is that 
    $\top\wedge 1 = 1$ in any topos. This is well-known in propositional logic, 
    but for the record, the proof is easy, just noting that the following is a 
    pullback:
        \[\begin{tikzcd}
            1 & 1 & 1 \\
            {1\times \Omega} & \Omega\times\Omega & \Omega
            \arrow[Rightarrow, no head, from=1-1, to=1-2]
            \arrow[Rightarrow, no head, from=1-2, to=1-3]
            \arrow["{\langle 1,\top\rangle}"', from=1-1, to=2-1]
            \arrow["\top\times1"', from=2-1, to=2-2]
            \arrow["\wedge"', from=2-2, to=2-3]
            \arrow["\top", from=1-3, to=2-3]
            \arrow["{\langle\top,\top\rangle}", from=1-2, to=2-2]
        \end{tikzcd}.\]
    In any case, by uniqueness of transposes, this proves that 
    $\bigvee\exists_\top =1$ must hold.
\end{proof}

\begin{cor} 
    \label{corollary:barcanredux}
    Any preordered object $(X,r)$ in a topos $\mscr E$ validates the formulas 
        \begin{enumerate}
            \item $\Diamond\forall x\phi\supset \forall x\Diamond\phi$
            \item $\Diamond\exists x\phi\supset \exists x\Diamond\phi$
        \end{enumerate}
\end{cor}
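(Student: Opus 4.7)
The plan is to mimic the proof of \cref{corollary:barcanformula}, but now using \cref{proposition:possibilitytrivialonOmega} in place of \cref{proposition:NecessityOnOmegaIsTrivial} and the unit law for $\closure$ in place of the counit law for $\interior$. The central observation is that since $\closure\colon \Omega\to\Omega$ is the identity, for any closed formula $\omega\colon 1\to\Omega$ we have $\Diamond\omega = \omega$. In particular, applied to the closed formulas $\forall_X\phi$ and $\exists_X\phi$ (each of type $1\to\Omega$), this gives
    \begin{equation*}
        \Diamond\forall_X\phi = \forall_X\phi \quad\text{and}\quad \Diamond\exists_X\phi = \exists_X\phi.
    \end{equation*}

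Hence to establish each implication in the statement, it suffices to prove the internal inequalities $\forall_X\phi \leq \forall_X\Diamond\phi$ and $\exists_X\phi \leq \exists_X\Diamond\phi$ in the order on $\Omega$. Both follow immediately from the pointwise inequality $\phi\leq \Diamond\phi$ on $PX$, which is exactly the unit law for the monad $\closure$ proved in the lemma preceding \cref{lemma:closure-is-a-monad} (this uses only the reflexivity half of $(X,r)$ being a preordered object). Then since both $\forall_X$ and $\exists_X$ are adjoints, hence in particular order-preserving functors, applying them to the inequality $\phi \leq \Diamond\phi$ yields the desired inequalities.

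Packaging these observations together proves both clauses, since $\Rightarrow_\Omega$ between closed formulas is equivalent to $\leq$ in the order on $\Omega$. There is no real obstacle here; the only subtlety is checking that the hypothesis of being merely a preordered object (not requiring transitivity) suffices, and indeed it does since the unit law $1\leq \closure$ uses only reflexivity. The symmetry with \cref{corollary:barcanformula} is exact: there the counit $\interior\leq 1$ combined with triviality of $\interior$ on $\Omega$ gave the two Barcan-type implications for $\Box$; here the unit $1\leq \closure$ combined with triviality of $\closure$ on $\Omega$ gives the analogous implications for $\Diamond$.
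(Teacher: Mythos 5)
Your proposal is correct and follows exactly the route the paper takes: it reduces both implications via \cref{proposition:possibilitytrivialonOmega} (so that $\Diamond$ is trivial on closed formulas), then concludes from the unit inequality $\phi\leq\Diamond\phi$ together with order-preservation of $\forall_X$ and $\exists_X$. Your added remark that only reflexivity is needed is a harmless sharpening consistent with the paper's argument.
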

\begin{proof}
    The proof is analogous to that of  \cref{corollary:barcanformula} again 
    using the previous proposition, the functoriality of quantifiers, and the 
    fact that the order validates $\phi\supset\Diamond \phi$.
\end{proof}

\begin{remark}
    The converses of the formulas in \cref{corollary:barcanredux} are also of 
    interest. These are $\forall x\Diamond\phi\supset \Diamond\forall x\phi$ 
    and $\exists x\Diamond\phi\supset \Diamond\exists x\phi$. In general, the 
    former fails in our models in toposes whereas the latter is provable in 
    first-order S4 \cite[\S 4.1]{awodeykishida2008}. In our system, the latter 
    takes the form $\exists_X\Diamond\phi \leq \exists_X\phi$ in the order on 
    $\Omega$ since $\Diamond = 1$ on $\Omega$ by 
    \cref{proposition:possibilitytrivialonOmega}. Its validity might be 
    surprising, since it glosses as \emph{if there is something for which a 
    proposition is possibly the case then there is something for which it is 
    the case}, which we are tempted to sloganize as \emph{possibilities are 
    actual}, or perhaps \emph{possibilities are real}. The following example 
    invalidates the universally quantified formula and sheds some light on the 
    truth of the existential one. \qed
\end{remark}

\begin{example} \label{example:invalidateBarcanconverse}
    Return to the situation of \cref{example:simplesbranchingspacetime}, 
    namely, the simple branching space-time $\mscr W$ with four point events 
    and three nontrivial arrows. Again, the histories presheaf $H$ makes the 
    assignments summarized in the diagram 
        \[\begin{tikzcd}
            && {\lbrace h_1\rbrace} && {\lbrace h_1\rbrace} \\
            H & {=} && {\lbrace h_1,h_2\rbrace} \\
            &&& {\lbrace h_1,h_2\rbrace}
            \arrow["{-\cdot\alpha}"', hook, from=1-3, to=2-4]
            \arrow["{-\cdot\beta}", hook', from=1-5, to=2-4]
            \arrow["{-\cdot\gamma}", from=2-4, to=3-4]
        \end{tikzcd}.\]
    Most crucially, $e_0$ is the unique choice point, and $h_1$ and $h_2$ are 
    obviously undivided at $e_{-1}$ as discussed in 
    \cref{example:simplesbranchingspacetimeredux}. Let $H_1$ and $H_2$ denote 
    the following subobjects of $H$, namely, those summarized by their images 
    in $\set$ in the respective diagrams
        \[\begin{tikzcd}
            {\lbrace h_1\rbrace} && \emptyset && \emptyset && {\lbrace h_2\rbrace} \\
            & {\lbrace h_1,h_2\rbrace} &&&& {\lbrace h_1,h_2\rbrace} \\
            & {\lbrace h_1,h_2\rbrace} &&&& {\lbrace h_1,h_2\rbrace}
            \arrow["{-\cdot\alpha}"', hook, from=1-1, to=2-2]
            \arrow["{!}", from=1-3, to=2-2]
            \arrow["{-\cdot\gamma}", from=2-2, to=3-2]
            \arrow["{!}"', from=1-5, to=2-6]
            \arrow["{-\cdot \beta}", from=1-7, to=2-6]
            \arrow["{-\cdot\gamma}"', from=2-6, to=3-6]
        \end{tikzcd}.\]
    Let $\phi\colon H\to \Omega$ denote the classifying morphism of $H_1$, the 
    subobject on the left. Think of this as the proposition \emph{$h_1$ 
    occurs}. Our claim is that the formula $\forall x\Diamond \phi \supset 
    \forall x\phi$ is not valid in this model. This formula is interpreted as 
    an implication, or equivalently, as an inequality in the order on $\Omega$; 
    thus, the implication is validated if there is a valid containment between 
    (the subobjects associated to) the antecedent $\forall_H\Diamond\phi$ and 
    the consequent $\forall_H\phi$. It takes only one component where this 
    fails toevidence that the implication cannot hold. So, consider the values 
    at the event $e_{-1}$. By the computation of the effect of internal 
    universal quantification in presheaf categories (cf. 
    \cite[Example 1.3]{awodeykishidakotzsch2014} and \cite[Equations III.8.(14)-III.8.(15)]{maclanemoerdijk1992}), we have
        \begin{equation*}
            (\forall_H\phi)_{e_{-1}} = \bigvee \lbrace s\in\Omega(e_{-1}) \mid u^*s\leq \phi_{e_{-1}}(u,b) \text{ for all } u\colon y \to e_{-1}, b\in H_y\rbrace
        \end{equation*}
    and 
        \begin{equation*}
            (\forall_H\Diamond\phi)_{e_{-1}} = \bigvee \lbrace s\in\Omega(e_{-1}) \mid u^*s\leq \Diamond\phi_{e_{-1}}(u,b) \text{ for all } u\colon y \to e_{-1}, b\in H_y\rbrace.
        \end{equation*}
    There are two sieves on $e_{-1}$, namely, $\displaystyle \Omega(e_{-1}) = 
    \lbrace \top_{e_{-1}},\emptyset\rbrace$. Moreover, there is only one arrow 
    $y\to e_{-1}$, namely, the identity. So, we have that $u^*s=s$ is either 
    $\top$ or $\emptyset$ and consequently
        \begin{equation*}
            (\forall_H\phi)_{e_{-1}} = \bigvee \lbrace s\in\Omega(e_{-1}) \mid s\leq \phi_{e_{-1}}(h) \text{ for all } h\in H_{e_{-1}}\rbrace
        \end{equation*}
    and 
        \begin{equation*}
            (\forall_H\Diamond\phi)_{e_{-1}} = \bigvee \lbrace s\in\Omega(e_{-1}) \mid s\leq \Diamond\phi_{e_{-1}}(h) \text{ for all } h\in H_{e_{-1}}\rbrace.
        \end{equation*}
    But on the one hand, $\phi_{e_{-1}}(h_2) = \emptyset$ holds by construction 
    of $\phi$, meaning that $\top$ is not in the former union. On the other 
    hand, $\Diamond \phi_{e_{-1}}(h_1) = \Diamond \phi_{e_{-1}}(h_2) = \top$ 
    both hold by the presheaf forcing semantics of $\Diamond$ in  \cref{proposition:presheafforcingsemanticsinteriorandclosure} since 
    $h_1\approx_{e_{-1}} h_2$ holds. Therefore, $\top$ is in the latter union 
    and so we conclude that
        \begin{equation*}
            (\forall_H\phi)_{e_{-1}} = \emptyset \qquad (\forall_H\Diamond\phi)_{e_{-1}} = \top
        \end{equation*}
    Therefore, the latter is not contained in the former, and we have our 
    counter-model of $\forall x\Diamond \phi \supset \forall x\phi$ as was 
    claimed. Note that this is the simplest component of the involved presheaf 
    that invalidates the formula. But the components at $e_2$ do the job too. 
    For, using the computations of the effect of internal `$\forall$' above, 
    one can see that
        \begin{equation*}
            (\forall_H\phi)_{e_2} = \emptyset \qquad \text{whereas} \qquad (\forall_H\Diamond \phi)_{e_2} = \lbrace \beta\gamma\rbrace
        \end{equation*}
    since again $h_1$ and $h_2$ are obviously undivided at $e_{-1}$. \qed
\end{example}

A final application now concerns the \emph{existence predicate} of, for 
example, \cite{scott1979}. This is indicated by the symbol `$\mathsf E$' 
applied to terms $t$, and resulting in a predicate $\mathsf E(t)$ having the 
intuitive meaning of \emph{$t$ exists}. This predicate is used in the reference 
in the formulation of theories requiring partial operations such as the 
theories of categories and of local rings. For simplicity\footnote{See 
\cite[Metatheorem 2.3]{scott1979} for details on the relationship between 
equality, equivalence and existence in this system.}, we shall take the 
existence prediate to be defined as
    \begin{equation*}
        \mathsf E(t) \leftrightsquigarrow  \exists y(t = y)
    \end{equation*}
for a term of type $X$, a variable of type $X$. This is also an 
\textit{actualist definition} of the existence predicate 
\cite[\S 2.2.2]{menzel2022}. Note that $\mathsf E(t)$ is thus $\Omega$-valued 
but not necessarily a truth value. Hence it is not the case that $\Box\mathsf 
E(t) = \mathsf E(t)$. 

In the indistinguishability interpretation, we have the following. 
In any presheaf topos $\mscr E = [\mscr C^{op},\set]$, the forcing semantics of 
existential quantification for an preordered object $(X,r)$ yields
    \begin{equation*}
        C\Vdash \exists x( a =_X x) \text{ if, and only if, } C\Vdash a=_Xb \text{ for some } b\in XC.
    \end{equation*}
But the right side is just ordinary equality in $XC$. Hence we have 
    \begin{equation*}
        C\Vdash \exists x( a =_X x) \text{ if, and only if, } a=b \text{ for some } b\in XC
    \end{equation*}
which is always true, since the forcing statement assumes that we are provided 
with the $a\in XC$. On the other hand, the formula $\exists x\Box (y=_Xx)$ has 
the forcing semantics
    \begin{align}
        C\Vdash \exists x\Box (a=_Xx) &\text{ if, and only if, } C\Vdash \exists x\Box (a=_Xx) \\
                                      &\text{ if, and only if, } C\Vdash \Box (a=_Xb) \text{ for some } b\in XC \\
                                      &\text{ if, and only if, } C\Vdash (b\approx b') \text{ implies } C\Vdash (a=_Xb') \text{ for all } b'\in XC
    \end{align}
by \cref{proposition:presheafforcingsemanticsinteriorandclosure}. The gloss on 
this is thus that there exists something necessarily equal to $a$ if, and only 
if, there is something $b$ such that $a$ is equal to everything in the 
equivalence class of $b$ under the relation on $X$. In the epistemic 
interpretation, there is something necessarily equal to $a$ if, and only if, 
there is something $b$ such that $a$ is equal to everything indistinguishable 
from $b$. But again $=_X$ reduces to ordinary equality when applied in each set 
$XC$ since $=_X$ classifies the diagonal. Hence we conclude that 
    \begin{equation*}
        C\Vdash \exists x\Box (a=_Xx) \text{ if, and only if, } C\Vdash (b\approx b') \text{ implies } C\Vdash (a=b') \text{ for all } b'\in XC
    \end{equation*}
which we think of as a very strong kind of indistinguishability: there exists 
something $b$ necessarily equal to $a$ if and only if $a$ is equal to 
everything indistinguishable from $b$. In general this will not be true in 
presheaf toposes as long as $\approx$ is a non-trivial relation, such as a 
genuinely objectively indeterminate branching space-time where the 
quantification is over histories $h$. In general this should not be surprising, 
for given the relation, there is no saying that $a$ is even equal to everything 
indistinguishable from itself. Generally, that is, identity implies 
indistinguishability but not conversely, unless we are in a special system 
where equivalence implies identity. Perhaps this is of interest in more general 
toposes such as genuine sheaf categories, which we leave to later developments. 

Finally, we observe that the previous developments invalidate actualist thesis 
\cref{equation:actualist-thesis}, namely, that \textit{existent potentialities 
are actual}. More colloquially, this says that what exists possibly or 
potentially exists actually. We formalize this as the formula     
    \begin{equation*}
        \forall x (\Diamond \mathsf E(x)\supset \mathsf E(x))
    \end{equation*}
which is of couse equal to 
    \begin{equation*}
        \forall x \Diamond \mathsf E(x)\supset \forall x\mathsf E(x)
    \end{equation*}
since in any topos $\forall_X$ is an internal functor. But this has a 
counter-model in the form on non-trivial branching above.

\begin{example}
    The result of \cref{example:invalidateBarcanconverse} we can apply to the special case of $\phi := \mathsf E(x)$ which shows that in the simple example of non-trivial branching the antecedent of $\forall x \Diamond \mathsf E(x)\supset \forall x\mathsf E(x)$ does not imply the consequent. We interpret this as a denial of the actualist thesis that possible existents are actual. Note that this does not necessarily validate possibilism. \qed
\end{example}

\section{Prospectus}

The results and perspectives in this paper offer a starting point for some 
further avenues of inquiry and raise some questions which we address here 
briefly in closing.

\begin{enumerate}
    \item The underlying poset \emph{our world} has been thought of as imposing
    a causal rather than temporal relation on point events. However, there is 
    no specific reason to restrict the general formalism to only this case. 
    That is, supposing that the underlying order is not necessarily 
    anti-symmetric, we have a reasonable model of temporally-related events. 
    Histories then are temporal histories rather than causal histories. 
    \emph{Obviously undivided} is still some kind of relation. Supposing 
    reasonable conditions under which it is symmetric, we have again an 
    internal equivalence relation which may again be given an epistemic 
    interpretation. Thus, on the one hand, we have a temporal parameterization 
    along with an object modeling an epistemic \emph{indistinguishability} 
    relation. The extent to which this promises a unified framework for a 
    treatment of epistemic and temporal modal logic \cite{dixon2015} will be 
    left to later investigations.
    \item The relation \emph{obviously undivided} defined on the histories 
    presheaf could be viewed as an object of the bicategory, or better the 
    double category, of relations in the topos of presheaves on \emph{our 
    world}. A more \emph{relational view} would be consistent with the 
    perspective of \cite{kishida2017} which views modalities as certain kinds 
    of relations and the adjunctions involving modalities and quantifiers are 
    studied there in some detail. Many of these features are formally 
    replicable in the setting of a \emph{double category of relations} 
    \cite{lambert2022b} and thus the question arises as to the extent to which 
    the essential constructions of this work are recovered in a more abstract 
    double-categorical framework. Any progress on this question would need to 
    be compared to, and probably motivated by, the direct bicategorical 
    treatment of modality appearing in \cite{hermida2011}. Motivating examples 
    in that study are bicategories of relations and of partial maps, both of 
    which are known to be abstract \emph{bicategories of relations} in the 
    sense of \cite{carboniwalters1987} which are recovered as examples of 
    locally posetal cartesian equipments in \cite{lambert2022b}.
    \item Such a general framework could afford applications to, and embrace 
    different approaches to, \emph{formal epistemology}. For example, the 
    geometric semantics of knowledge and belief in \cite{baltagetal2015} 
    plausibly fall under this picture. Common knowledge \cite{lehmann1984} is 
    potentially modeled using ordinary morphisms of the underlying category of 
    a double category which model subcollections of agents. This can 
    potentially be generalized to give relationships between modal models where 
    models are proarrows and ordinary arrows induce transition morphims from 
    the restriction and extension constructions of the ambient equipment. 
    Likewise, synthetic probability theory \cite{fritz2019} has not yet been 
    phrased in double-categorical terms, but the \emph{Markov categories} of 
    that subject bear striking resemblences to cartesian bicategories. 
    Commutative comonoids are an example. In general double categories allow 
    for the unified description of deterministic and non-deterministic 
    processes. They are thus a natural forum to try to recast the results and 
    methods of that subject. A satisfactory double-categorical synthetic 
    probability theory could almost certainly be based on some version of the 
    axioms for a cartesian equipment. Thus, we envision a further application of such a framekwork for formal epistemology to a description of probabilistic epistemic logic \cite{demey2015}.
    \item The philosophical import of the results of the paper and especially 
    those in the last section could certainly be studied further. A more 
    detailed analysis of the possibilism/actualism question seems in order. 
    Additionally, questions related to \emph{necessary entities} and 
    \emph{essential properties} \cite{parker1976}, may profitably be modeled 
    using the formal methods introduced in this paper.
\end{enumerate}

\bibliographystyle{alpha}
\bibliography{refs}

\end{document}